\newcommand{\R}{\mathbb{R}}
\newtheorem{theorem}{Theorem}[section]
\newtheorem{lemma}[theorem]{Lemma}
\newtheorem{remark}{Remark}[section]
\theoremstyle{definition}
\newtheorem{claim}{Claim}
\numberwithin{equation}{section}
\begin{document}
  \title{\LARGE\bf{Semiclassical solutions for critical Schr\"{o}dinger--Poisson systems involving multiple competing potentials}}
\date{}
\author{Lingzheng Kong,
Haibo Chen\thanks{\small Corresponding author: E-mail: math\_klz@csu.edu.cn (L. Kong),  math\_chb@163.com (H. Chen).}\\
{\small School of Mathematics and Statistics, Central South University,}\\
{\small Changsha, Hunan 410083, P.R. China}}
\maketitle
\begin{center}
 \begin{minipage}{14cm}
\begin{abstract}
In this paper,  a class of Schr\"{o}dinger--Poisson system involving multiple competing potentials and  critical Sobolev exponent is considered.
Such a problem cannot be studied with the same argument of the nonlinear term with only a positive potential, because the weight potentials set $\{Q_i(x)|1\le i \le m\}$ contains  nonpositive, sign-changing, and nonnegative elements. By introducing the ground energy function and subtle analysis, we first prove the existence of ground state solution $v_\varepsilon$  in the semiclassical limit via the Nehari manifold and  concentration-compactness principle. Then we show that $v_\varepsilon$ converges to the ground state solution of the associated limiting problem and   concentrates at a concrete set characterized by the potentials. At the same time, some properties for the ground state solution are also studied. Moreover, a sufficient condition for the nonexistence of the ground state solution is obtained.

\end{abstract}
 \vskip2mm
 \par
  {\bf Keywords: } Schr\"{o}dinger--Poisson system;  Semiclassical solutions; Multiple competing potentials;  Variational methods; Critical growth
 \vskip2mm
 \par
  {\bf Mathematics Subject Classification.}  35J20; 35J60
\end{minipage}
\end{center}
\vskip6mm
{\section{Introduction and statement of  results}\label{sec1}}
 \setcounter{equation}{0}
 \par
 In this paper,  we study the existence and concentration behavior of positive ground state solutions for  the following  critical Schr\"{o}dinger--Poisson system:
\begin{equation}\label{11}
\begin{cases}
  -\varepsilon^{2}\Delta u+h(x)\phi u+V(x)u=\sum\limits_{i=1}^{m} Q_i(x)|u|^{q_i-2}u+K(x)|u|^{4}u,&x\in\R^3,\\
  -\varepsilon^{2}\Delta\phi=4\pi h(x)u^2,&x\in\R^3,
\end{cases}\tag{$SP_\varepsilon$}
\end{equation}
where $\varepsilon>0$ is a small parameter, $4<q_1<q_2<\cdots<q_m<6$. $V(x)$ is the potential, $h(x)$ is the  electronic potential,  $Q_i(x)(i=1,\cdots,m)$ and $K(x)$ are weight potentials satisfying some competing conditions. Moreover, the nonlinear growth of $|u|^{4}u$ reaches the Sobolev critical exponent since $2^*=6$ for three spatial dimensions, which is why we call critical Schr\"{o}dinger--Poisson system in the title.
\par
Replacing the nonlinear term $\sum\limits_{i=1}^{m} Q_i(x)|u|^{q_i-2}u+K(x)|u|^{4}u$ by $f(x,u)$, \eqref{11} becomes the following system: 
\begin{equation}\label{112}
\begin{cases}
-\varepsilon^{2}\Delta u+h(x)\phi u+V(x)u=f(x,u),&x\in\R^3,\\
-\varepsilon^{2}\Delta\phi=4\pi h(x)u^2,&x\in\R^3,
\end{cases}
\end{equation}
which known as the nonlinear Schr\"{o}dinger--Maxwell system, arises in an interesting physical meaning because it appears in quantum mechanics models \cite{RHE,IPL,ET}  and in semiconductor theory \cite{VD2,PLL, PMCC}.
In  fact, systems like \eqref{112} have been first proposed in \cite{VD}  as a model describing the interaction between the electrostatic field and the solitary waves of nonlinear Schr\"{o}dinger type equation
\begin{equation}\label{111}
i\hbar \frac{\partial\psi }{\partial t}=-\frac{\hbar^2}{2m}\Delta\psi+W(x)\psi-|\psi|^{p-2}\psi,
\end{equation}
where $i$ is the imaginary unit, $\hbar$ is the Planck constant, $m$ is the mass of the field $\psi$, $W(x)$ is the time independent potential of the particle at the position $x\in\R^3$. Then, looking for the standing waves of Eq. \eqref{111}, namely waves of the form
$\psi(x,t)=u(x)e^{-i\omega t/\hbar},\hspace{1ex}x\in\R^3,\hspace{1ex}t\in\R,$
one is led to the system \eqref{112} with $\varepsilon^2=\frac{\hbar^2}{2m}$, $V(x)=W(x)-\omega$.
For more details on physical background, we refer the reader to \cite{VD,VD2} and the references therein.
\par 
In recent years, the qualitative analysis of positive solutions for problem \eqref{112}, including the existence, nonexistence, concentration behavior, and multiplicity,  has been widely investigated under various assumptions of the potentials. For more details, we refer the readers to previous studies  \cite{APA,APA2,VD,AD,PMCC,LFZ,DR,DR2,HLR,HLR2,JLJF,JLJF2,MY,PHR,Cerami,Cerami2,TXH,CST,LHL,SJT,CGF}. In particular, if $\varepsilon=V(x)=h(x)=1$, see, for example, Benci and Fortunato \cite{VD} for the case of $f(x,u)=0$,  demonstrated the existence of infinitely many solutions of an eigenvalue problem on bounded domain. 
Azzollini et al. \cite{APA}, when the nonlinearity satisfies Berestycki-Lions type assumptions, proved the existence of nontrivial solution.
\cite{AD,DR} for the pure power nonlinearity $|u|^{p-2}u$ with $p\in(2,6)$ by working in the subspace of radial functions of $H^1(\R^3)$ i.e., $H^1_r(\R^3)$, obtained multiple bound state solutions, the existence and nonexistence results, respectively.
Zhao and Zhao \cite{LFZ} for  the case of
$f(x,u)= \mu Q(x)|u|^{q-2}u+K(x)|u|^4u$ where $q\in[4,6)$ and $\mu>0$, introduced the assumption on the weight potential $K(x)$:
\vskip2mm
\par\noindent
$(K_1)$ $|K(x)-K(x_0)|=o(|x-x_0|^{\alpha}),\hspace{1ex}\text{where}\hspace{1ex} 1\leq\alpha<3\hspace{1ex} \text{and}\hspace{1ex} K(x_0)=\max\limits_{x\in \R^3}K(x),$
\vskip2mm
\par\noindent
to estimate the critical energy level, and proved the existence of a positive solution based on the methods of Brezis and Nirenberg and Lions' concentration-compactness principle. 

\par
If the electronic potential $h(x)$ is not a constant, 
Cerami and Vaira \cite{Cerami} for the case of $f(x,u)=a(x)|u|^{p-1}u$, $p\in(3,5)$ without any symmetry assumptions, by  using the Nehari manifold  and establishing compactness lemma, proved the existence of positive ground state and bound state solutions. 
Huang et al. \cite{HLR} for system \eqref{112} with $f(x,u)=a(x)|u|^{p-2}u+\mu K(x)u$, where $p\in (4,6)$, $K(x)\geq0$  and $a(x)$ can change sign without any symmetry assumptions, mainly prove the existence of at least two positive solutions via a comparison between $\mu$ and the first eigenvalue of $-\Delta u+id$. 
If $V(x)$ is not a constant, and, then $V(x)$ may not be radial, so one cannot work in $H^1_r(\R^3)$ directly. To overcome this difficulty, by assuming that $V(x)$ satisfying $$0<V_0=\inf\limits_{x\in\R^{3}}V(x)<V_\infty=\lim\limits_{|x|\to\infty}V(x),$$ which  was firstly introduced in \cite{PHR}. Azzollini et al. \cite{APA2} for the case of $f(x,u)=|u|^{p-1}u$, proved the existence of ground state solutions and they also studied the existence of solutions for the critical growth by concentration compactness principle. 
By assuming that $ V(x)$, $a(x)$ and $h(x)$ satisfy some  decay rates, Cerami and Molle \cite{Cerami2} for the case of $f(x,u)=a(x)|u|^{p-1}u$, proved the existence of a positive bound state solution  via the Nehari manifold, which complements the results given by \cite{Cerami} in some sense.
\par 
If the parameter $\varepsilon\to0$,  the systems like \eqref{112} and bound states are called semiclassical problems and semiclassical states, respectively. Semiclassical states can be used to describe a kind of transition between Quantum Mechanics and Newtonian Mechanics. 
He and Zou \cite{hezou} proved the existence of ground state solutions for the critical growth concentrating on the minima of $V(x)$. 
Ruiz \cite{DR2} showed the semiclassical states concentrating around a sphere when $V(x)$ statisfies some suitable assumptions. J. Wang et al. \cite{JLJF} for the case of $f(x,u)=b(x)g(u)+|u|^4u$,  proved that there are two families of positive solutions concentrating on the maxima of $b(x)$ and the minima of $V(x)$, respectively. 
Yang \cite{MY} for the case of $f(x,u)=P(x)g(u)+Q(x)|u|^4u$ where $\min\limits_{x\in\R^3}V(x)>0$, $\inf\limits_{x\in\R^3}P(x)>0$ and $\min\limits_{x\in\R^3}Q(x)>0$, proved the existence of semiclassical solutions concentrating at a special set characterized by the potentials, particularly, if $\mathcal{V}\cap \mathcal{P}\cap\mathcal{Q}\neq\emptyset $,  then the ground state solusion of $-\Delta v +V_{min}v=P_{max}g(v)+Q_{max}|v|^4v$ is obtained, where $\mathcal{V}=\{y\in\R^3: V(y)=\min\limits_{x\in\R^3}V(x)\}$, $\mathcal{P}=\{y\in\R^3: P(y)=\max\limits_{x\in\R^3}P(x)\}$ and $\mathcal{Q}=\{y\in\R^3: Q(y)=\max\limits_{x\in\R^3}Q(x)\}$. As one can see, the interaction of several potentials and the nonlinearity with critical growth have a significant impact on the existence and concentration behavior of positive solutions for the Schr\"{o}dinger--Poisson systems like \eqref{112}. However, the  existence and concentration results on them involving both the critical growth nonlinearity and more than four potentials have not previously been described. That is the main motivation of this work.
\par
Inspired by the fact mentioned above, the purpose of this paper is to study how the interaction of more than four potentials will make an impact on the existence and concentration of the ground state solutions for  the critical Schr\"{o}dinger--Poisson system \eqref{11} when $\varepsilon$ is small. It is quite natural to ask that: can we obtain a concentration results for the ground state solutions of \eqref{11} ? If so, where? In the present paper,  we shall give some answers for these questions. Furthermore, the aim of our work is twofold: (i) to study the existence of positive ground state solution for \eqref{11} and its properties, such as concentration, exponential decay, etc. (ii) to find some sufficient conditions  for the nonexistence of positive ground state solution.
\par
It is well known that system \eqref{11} can be easily transformed into a single Schr\"{o}dinger equation with a nonlocal term. In fact,  as we shall see in Section 2, for every $u\in H^{1}(\R^3)$, and any fixed $\varepsilon>0$,
 applying the Lax-Milgram theorem,
a unique $\phi_{u/\varepsilon}\in \mathcal{D}^{1,2}(\R^3)$ is obtained, such that $ -\varepsilon^{2}\Delta\phi=4\pi h(x)u^2$ and that, inserted into the first equation of \eqref{11},
gives
\begin{equation}\label{1}
-\varepsilon^{2}\Delta u+h(x)\phi_{u/\varepsilon} u+V(x)u=\sum\limits_{i=1}^{m} Q_i(x)|u|^{q_i-2}u+K(x)|u|^{4}u.
\tag{$S_{\varepsilon}$}
\end{equation}
Hence, $v$ is a solution of \eqref{1} if and only if $(v, \phi_{v/\varepsilon})$ is a solution of \eqref{11}. For simplicity, in many cases we say $v\in H^1(\R^3)$, instead of $(v, \phi_{v/\varepsilon})\in H^1(\R^3)\times \mathcal{D}^{1,2}(\R^3)$, is a weak solution of \eqref{11}.
Then in the following we only need to study the Eq. \eqref{1}.
\par
Before stating the main result of this paper, we introduce the precise assumptions on $h$, $V$, $K$, and $Q_i(i=1,2,\cdots,m)$ :
 \vskip2mm
 \par\noindent
$(f_1)$ $V(x),K(x),Q_i(x)(1\le i \le m)\in C^{1}(\R^{3},\R)$.\\
$(f_2)$ $(i-i_0)Q_i(x)\geq 0$ for $i \neq i_0$, $Q_{i_0}(x)$ is allowed to change sign.\\
$(f_3)$ $0<V_0:=\inf\limits_{x\in\R^{3}}V(x)$, $V(x)$ and $Q_i(x)(1\le i \le m)$ are bounded in $\R^3$.\\
$(f_4)$ $0\le K(x)\le K^{\infty }:=\limsup\limits_{|x|\to\infty }K(x)$, and there exists $x_0\in\R^{3}$ such that $K(x_0)=K^{\infty }$.\\
$(H)$ $h(x)\in C(\R^3,\R), 0< h_0=\inf\limits_{x\in\R^{3}}h(x)\le h(x)\le h_\infty:=\lim\limits_{|x|\to\infty }h(x)<\infty.$
\vskip2mm
\par
For every $s\in\R^3$, we consider the following equation with parameters:
\begin{equation}\label{gs}
-\Delta u+\left[\frac{1}{|x|}*u^2\right]h^2(s)u+V(s)u=\sum\limits_{i=1}^{m}Q_i(s)|u|^{q_i-2}u+K(s)|u|^{4}u,\quad x\in\R^3,
\end{equation}
where $s\in \R^3$ acts as a parameter instead of an independent variable.
The ground energy function $G(s)$, which is defined to be the ground energy associated with \eqref{gs}, and was firstly introduced in \cite{XB}. See section \ref{scr} for a more detailed. Moreover, let $c_\infty$ be the ground energy associated with ``limiting problem'' of \eqref{1}, which is given as
\begin{equation}\label{limpro}
-\Delta u+\left[\frac{1}{|x|}*u^2\right]h^2_{\infty}u+V_{\infty}u=\sum\limits_{i=1}^{m} Q_i^{\infty}|u|^{q_i-2}u+K^{\infty}|u|^{4}u,\quad x\in\R^3,
\end{equation}
where $
V_{\infty}:=\liminf\limits_{|x|\to\infty}V(x),\hspace{1ex} Q_i^{\infty}:=\limsup\limits_{|x|\to\infty}Q_i(x).
$
\par
Then we state our main results of this work as follows.
\begin{theorem}\label{the1}
 Suppose that the potentials  $V$, $K$, and $Q_i(1\le i\le m)$ satisfy conditions $(f_1)$--$(f_4)$, the electronic potential $h(x)\equiv1$ and 
 \begin{equation}\label{14}
 c_\infty>c_0:=\inf\limits_{s\in \R^3}G(s).
 \end{equation}
 Then, for $\varepsilon>0$ small enough,  \eqref{1} has a positive ground state solution $v_\varepsilon$. Moreover, we have
 	\begin{tabular}{@{}rp{15.05cm}}
 	\rm{(\romannumeral1)} & The positive ground state solution $v_\varepsilon $ possesses a maximum point $x_\varepsilon$ in $\R^3$, such that $\lim\limits_{\varepsilon\to 0}dist(x_\varepsilon,\mathcal{G})=0$. Setting $\eta_\varepsilon(x)=v_\varepsilon(\varepsilon x+x_\varepsilon)$, where $x_\varepsilon\to x_0,$ as $\varepsilon\to0$, and $\eta_\varepsilon$ converges in $H_\varepsilon$ to a positive ground state solution of\\
\rule[-15pt]{0pt}{35pt}&\multicolumn{1}{c}{$-\Delta u+\phi_u u+V(x_0)u=\sum\limits_{i=1}^{m} Q_i(x_0)|u|^{q_i-2}u+K(x_0)|u|^{4}u,\hspace{1ex}x\in\R^3,$}\\
\rule[0pt]{0pt}{0pt}& where $\mathcal{G} :=\{s\in\R^3;G(s)=c_0\}.$\\
 	\rm{(\romannumeral2)} &There exist constants $C>0$ and $\mu >0$ such that\\
\rule[20pt]{0pt}{0pt}	&\multicolumn{1}{c}{$v_\varepsilon(x)\le  C exp\left({-\frac{\mu}{\varepsilon}|x-x_\varepsilon|}\right)$.}\\
 \end{tabular}
\end{theorem}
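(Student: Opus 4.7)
The plan is to combine a Nehari-manifold variational scheme with a careful energy-comparison argument driven by the ground energy function $G$ and the hypothesis \eqref{14}. After the Lax--Milgram reduction noted in the excerpt (where $h\equiv 1$ simplifies the Poisson part but plays no structural role), weak solutions of \eqref{1} are the critical points of
\begin{equation*}
I_\varepsilon(u)=\tfrac12\int_{\R^3}(\varepsilon^{2}|\nabla u|^{2}+V(x)u^{2})\,dx+\tfrac14\int_{\R^3}\phi_{u/\varepsilon}u^{2}\,dx-\sum_{i=1}^{m}\tfrac{1}{q_i}\int_{\R^3}Q_i|u|^{q_i}\,dx-\tfrac16\int_{\R^3}K|u|^{6}\,dx
\end{equation*}
on the natural Hilbert space $H_\varepsilon$. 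Under $(f_1)$--$(f_4)$ the functional has mountain-pass geometry, so I would introduce the Nehari manifold $\N_\varepsilon=\{u\in H_\varepsilon\setminus\{0\}:\langle I_\varepsilon'(u),u\rangle=0\}$, set $c_\varepsilon=\inf_{\N_\varepsilon}I_\varepsilon$, and extract a bounded Palais--Smale sequence at this level by Ekeland's principle.

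The decisive step is the two-sided energy estimate
\begin{equation*}
\limsup_{\varepsilon\to 0}c_\varepsilon\le c_0<c_\infty,\qquad c_\varepsilon<\tfrac{1}{3}(K^{\infty})^{-1/2}S^{3/2},
\end{equation*}
with $S$ the best Sobolev constant, which simultaneously controls escape to infinity and critical concentration. The upper bound on $c_\varepsilon$ is produced by taking, for any $\delta>0$, a point $s_\delta$ with $G(s_\delta)\le c_0+\delta$ together with a ground state $w_{s_\delta}$ of the autonomous equation \eqref{gs} attaining $G(s_\delta)$; rescaling $u_\varepsilon(x)=w_{s_\delta}((x-s_\delta)/\varepsilon)$, projecting to $\N_\varepsilon$, and using continuity of the coefficients at $s_\delta$ yields $I_\varepsilon(u_\varepsilon)\to G(s_\delta)$. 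The strict inequality $c_0<c_\infty$ is precisely \eqref{14}, while the Talenti-type sub-threshold bound follows because the profile $w_{s_\delta}$ has strictly subcritical energy thanks to the subcritical terms $Q_i|u|^{q_i}$ with $4<q_i<6$.

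With the PS level lying strictly below both thresholds, Lions' concentration-compactness principle applied to the PS sequence rules out vanishing (forbidden by the lower energy bound), escape to infinity (forbidden by $c_\varepsilon<c_\infty$ via a Brezis--Lieb-type splitting against the problem at infinity), and critical concentration into a single Dirac bubble (forbidden by the sub-threshold bound). Hence a nontrivial weak limit $v_\varepsilon\in\N_\varepsilon$ is produced and strong convergence in $H_\varepsilon$ follows, making $v_\varepsilon$ a ground state. Positivity is recovered by replacing $v_\varepsilon$ with $|v_\varepsilon|$, which still lies on $\N_\varepsilon$ with the same energy, and applying the strong maximum principle to the resulting supersolution.

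For (i), let $x_\varepsilon$ be a maximum point of $v_\varepsilon$ and set $\eta_\varepsilon(x)=v_\varepsilon(\varepsilon x+x_\varepsilon)$. The rescaled $\eta_\varepsilon$ satisfies an equation with coefficients $V(\varepsilon\cdot+x_\varepsilon),Q_i(\varepsilon\cdot+x_\varepsilon),K(\varepsilon\cdot+x_\varepsilon)$ and is bounded in $H^{1}(\R^3)$. The estimate $c_\varepsilon\le c_0+o(1)$ forbids $|x_\varepsilon|\to\infty$ (else the limiting energy would be at least $c_\infty$), so along a subsequence $x_\varepsilon\to x_0$ and the limit $\eta_0$ of $\eta_\varepsilon$ solves the autonomous equation displayed in the theorem. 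A matching lower energy bound identifies the limiting energy with $G(x_0)$, forcing $G(x_0)=c_0$ and hence $x_0\in\mathcal{G}$. Finally, for (ii) a Moser iteration applied to $\eta_\varepsilon$ yields a uniform $L^{\infty}$ bound; comparing $\eta_\varepsilon$ with a decaying supersolution $Ce^{-\mu|x|}$ of $-\Delta+V_0/2$ outside a large ball and undoing the scaling delivers $v_\varepsilon(x)\le Ce^{-\mu|x-x_\varepsilon|/\varepsilon}$. The hardest step is the compactness analysis at critical growth: one must keep the PS level strictly below both the Aubin--Talenti bubble threshold and $c_\infty$, and it is exactly the interaction of the multiple competing potentials encoded in \eqref{14}, combined with the extra subcritical energy contributed by the terms $Q_i|u|^{q_i}$, that delivers both gaps simultaneously.
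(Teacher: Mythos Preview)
Your proposal follows essentially the same route as the paper: Nehari/mountain-pass level, the energy comparison $\limsup_{\varepsilon\to0}c_\varepsilon\le c_0<c_\infty$ via translated ground states of the frozen-coefficient problem, a Palais--Smale compactness lemma below the double threshold $\min\{c_\infty,\tfrac{1}{3}S^{3/2}|K|_\infty^{-1/2}\}$, and then rescaling, Moser iteration and a barrier comparison for concentration and exponential decay. The paper first performs the change of variable $x\mapsto\varepsilon x$ so that $\varepsilon$ appears only in the coefficients, but this is cosmetic.

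There is, however, one genuine gap in your sub-threshold estimate. You justify $c_\varepsilon<\tfrac{1}{3}(K^\infty)^{-1/2}S^{3/2}$ by saying that ``the profile $w_{s_\delta}$ has strictly subcritical energy thanks to the subcritical terms $Q_i|u|^{q_i}$.'' But at a generic near-minimizer $s_\delta$ of $G$, the Brezis--Nirenberg argument for the frozen problem yields only $G(s_\delta)<\tfrac{1}{3}S^{3/2}K(s_\delta)^{-1/2}$, which is \emph{weaker} than the bound you need whenever $K(s_\delta)<K^\infty$; nothing in your choice of $s_\delta$ ties it to the maximum of $K$. The paper closes this by invoking $(f_4)$ explicitly: it selects the distinguished point $x_0$ with $K(x_0)=K^\infty=|K|_\infty$, tests $I^{x_0}$ against the truncated Aubin--Talenti profile $U_{\sigma,x_0}$ centered at $x_0$, and uses the standard asymptotics $|U_{\sigma,x_0}|_{q_i}^{q_i}=O(\sigma^{(6-q_i)/2})$ together with $(f_2)$ and $q_m<6$ to obtain
\[
c_0\le G(x_0)\le \max_{t\ge0}I^{x_0}(tU_{\sigma,x_0})<\tfrac{1}{3}S^{3/2}|K|_\infty^{-1/2}.
\]
Without anchoring the Talenti test at a point where $K$ is maximal, the threshold you claim does not follow. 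A minor further difference: the paper obtains positivity by running the whole argument for the truncated functional $I_\varepsilon^+$ (nonlinearity applied to $u^+$) and then using $\langle (I_\varepsilon^+)'(u_\varepsilon),u_\varepsilon^-\rangle=0$, rather than by replacing $v_\varepsilon$ with $|v_\varepsilon|$; both devices are standard here.
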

\begin{remark}\label{rem1}
\rm{}The existence of solution can be obtained, if the the electronic potential $h$ satisfies condition $(H)$.
However, for concentration, it is complicated  since the nonlocal term
$$\int_{ \R^3}h(\varepsilon x)\left[\frac{1}{|x|}*\left(h(\varepsilon x)u^2\right)\right] u^2dx= \int_{\R^3}\int_{\R^3}h(\varepsilon x)u^2(x)\frac{h(\varepsilon y)u^2(y)}{|x-y|}dydx,$$ appears  $h(\varepsilon x)$ and $h(\varepsilon y)$. For the sake of simplicity, we assume that $h(x)\equiv1$.
\end{remark}
\begin{remark}
\rm{}Conditions like $(f_1)$--$(f_4)$ on the nonlinear term was introduced by \cite{FHN}. Fan obtained the existence of positive solutions of Kirchhoff-type problem.
Theorem \ref{the1} extends the main results in \cite{FHN} to the Schr\"{o}dinger--Poisson systems.
\end{remark}
Now we give our key idea for the proof of  Theorem \ref{the1}.  As we deal with the problem \eqref{1} in $H^1(\R^3)$, the sobolev embeddings $H^1(\R^3)\hookrightarrow L^q(\R^3)$, $q\in[2,6)$ are not compact. The energy functional does not satisfy $(PS)_c$ condition at any energy level $c$. To overcome this obstacle, we try to pull the energy level down below critical energy level. Different from the assumptions $(K_1)$ on $K(x)$ and the methods in \cite{LFZ,HLR2}, we first prove that the upper bound of the energy level defined in \eqref{lstene} no more than the infimum of $G(s)$. Then, under our conditions, we obtain the estimation of the critical level for the infimum of $G(s)$   with the help of technique of Brezis and Nirenberg \cite{{Bre}}. On the other hand, one can see that  the methods used in \cite{hezou,JLJF,JLJF2, MY} to establish a concentrating set of the ground state solutions for \eqref{1} does not work,  owing to the competing relationship among multiple potentials. We succeed in doing so by introducing a new concentration set $\mathcal{G}$ for the ground state solutions, which is not relies on the condition
\begin{equation}\label{15}
\mathcal{M}=\bigcap_{i=1}^{m}\mathcal{Q}_i\cap\mathcal{V} \cap \mathcal{K}\neq\emptyset,
\end{equation}
where $\mathcal{V}=\{y\in\R^3: V(y)=\min\limits_{x\in\R^3}V(x)\}$, $\mathcal{Q}_i=\{y\in\R^3: Q_i(y)=\max\limits_{x\in\R^3}Q_i(x)\}$ and $\mathcal{K}=\{y\in\R^3: K(y)=\max\limits_{x\in\R^3}K(x)\}$. Particularly, if the condition \eqref{15} holds, it is easy to see that $\mathcal{M}=\mathcal{G}$.
\par
To overcome the difficulty is mentioned in Remark \ref{rem1}, we assume that the electronic potential $h(x)$ satisfies
 \vskip2mm
\par\noindent
$(H_1)$ $h(x)\in C(\R^3,\R)$, $h(x)\geq0$, $\lim\limits_{|x|\to \infty }h(x)=0$ and $h(x)=0$ if $x\in \mathcal{G}.$ 
\vskip2mm
\par
Then we have the following result.
\begin{theorem}\label{the2}
	Suppose that the potentials  $V$, $K$, and $Q_i(1\le i\le m)$ satisfy conditions $(f_1)$--$(f_4)$, the electronic potential $h$ satisfies $(H_1)$ and 
	\begin{equation*}
	c_\infty>c_0:=\inf\limits_{s\in \R^3}G(s).
	\end{equation*}
	Then, for $\varepsilon>0$ small enough,  \eqref{1} has a positive ground state solution $v_\varepsilon$. Moreover, we have\\
\begin{tabular}{@{}rp{15.05cm}}
\rm{(\romannumeral1)} & The positive ground state solution $v_\varepsilon $ possesses a maximum point $x_\varepsilon$ in $\R^3$, such that $\lim\limits_{\varepsilon\to 0}dist(x_\varepsilon,\mathcal{G})=0$. Setting $\eta_\varepsilon(x)=v_\varepsilon(\varepsilon x+x_\varepsilon)$, where $x_\varepsilon\to x_0,$ as $\varepsilon\to0$, and $\eta_\varepsilon$ converges in $H_\varepsilon$ to a positive ground state solution of\\
\rule[-15pt]{0pt}{35pt}&\multicolumn{1}{c}{$-\Delta u+V(x_0)u=\sum\limits_{i=1}^{m} Q_i(x_0)|u|^{q_i-2}u+K(x_0)|u|^{4}u,\hspace{1ex}x\in\R^3,$}\\
&where $\mathcal{G}:=\{s\in\R^3;G(s)=c_0\}.$\\
\rm{(\romannumeral2)} &There exist constants $C>0$ and $\mu >0$ such that\\
\rule[-8pt]{0pt}{25pt}&\multicolumn{1}{c}{$v_\varepsilon(x)\le  C exp\left({-\frac{\mu}{\varepsilon}|x-x_\varepsilon|}\right).$}\\
	\end{tabular}
\end{theorem}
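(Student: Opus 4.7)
The plan is to mimic the proof of Theorem \ref{the1} almost verbatim, using hypothesis $(H_1)$ only at the step where one identifies the limit equation. In detail, I would introduce on $H_\varepsilon$ the energy functional $I_\varepsilon$ whose nonlocal Coulomb part is
$$\frac14\iint_{\R^3\times\R^3}\frac{h(\varepsilon x)h(\varepsilon y)u^2(x)u^2(y)}{|x-y|}\,dx\,dy,$$
which is well-defined since $h\in L^\infty$ by $(H_1)$. The Nehari manifold $\mathcal{N}_\varepsilon$ is smooth because the nonlocal term is $4$-homogeneous and $q_i>4$, and $\inf_{\mathcal{N}_\varepsilon}I_\varepsilon$ coincides with the mountain-pass level $c_\varepsilon$. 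As in Theorem \ref{the1}, a careful comparison with the scaled ground state of \eqref{gs} at an arbitrary $s\in\R^3$ yields $\limsup_{\varepsilon\to 0}c_\varepsilon\le c_0$, and combined with the Brezis--Nirenberg type estimate, the standing assumption $c_\infty>c_0$, and the concentration-compactness principle, this gives compactness of a $(PS)_{c_\varepsilon}$ sequence below the critical escape level. A positive ground state $v_\varepsilon$ is obtained.

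For part \textrm{(i)}, I would take a maximum point $x_\varepsilon$ of $v_\varepsilon$ and rescale $\eta_\varepsilon(x):=v_\varepsilon(\varepsilon x+x_\varepsilon)$. Uniform $H^1$--boundedness together with $L^\infty$ bounds from Moser iteration give $\eta_\varepsilon(0)\ge\delta>0$; after extracting subsequences, $x_\varepsilon\to x_0$ and $\eta_\varepsilon\rightharpoonup\eta\not\equiv 0$ weakly in $H^1(\R^3)$. Passing to the limit in the rescaled Euler--Lagrange equation, $\eta$ solves
$$-\Delta\eta+h^2(x_0)\Bigl[\frac{1}{|x|}*\eta^2\Bigr]\eta+V(x_0)\eta=\sum_{i=1}^{m}Q_i(x_0)|\eta|^{q_i-2}\eta+K(x_0)|\eta|^{4}\eta,$$
so the energy of $\eta$ is at least $G(x_0)$. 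Combining with $\liminf I_\varepsilon(v_\varepsilon)\le c_0$ forces $G(x_0)=c_0$, hence $x_0\in\mathcal{G}$ and $\mathrm{dist}(x_\varepsilon,\mathcal{G})\to 0$. At this point hypothesis $(H_1)$ enters decisively: $x_0\in\mathcal{G}$ implies $h(x_0)=0$, so the Coulomb term drops out and $\eta$ in fact solves the local limiting equation written in the statement. The fact that the energy equals the ground level gives strong $H^1$ convergence of $\eta_\varepsilon$ to a ground state $\eta$.

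The main obstacle is the handling of the nonlocal term during this limit passage, since it is not translation-invariant and Coulomb energy is generally not sequentially lower semicontinuous under weak convergence of densities. The remedy is to write the nonlocal term after translation as $\int h(\varepsilon x+x_\varepsilon)\phi_{h(\varepsilon\cdot+x_\varepsilon)\eta_\varepsilon^2}\eta_\varepsilon^2\,dx$, split the integration into a large ball (where $h(\varepsilon x+x_\varepsilon)\to h(x_0)=0$ uniformly by continuity) and its complement (where $h(\varepsilon x+x_\varepsilon)\to 0$ by the decay in $(H_1)$), and dominate each piece via the Hardy--Littlewood--Sobolev inequality. This shows the nonlocal term tends to $0$ both in the equation for $\eta$ and in the energy, so one can legitimately compare $I_\varepsilon(v_\varepsilon)$ with the functional of the purely local limit problem and identify the limit profile as its ground state.

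For part \textrm{(ii)}, once $\eta_\varepsilon\to\eta$ strongly in $H^1\cap L^\infty$ and $V(x_0)>0$, the right-hand side of the equation for $\eta_\varepsilon$ is bounded by $\frac{V_0}{2}\eta_\varepsilon$ outside a fixed ball for $\varepsilon$ small. A standard comparison with the super-solution $Ce^{-\mu|x|}$ of $-\Delta w+\tfrac{V_0}{2}w=0$, followed by undoing the rescaling $x\mapsto\tfrac{x-x_\varepsilon}{\varepsilon}$, yields the claimed exponential decay $v_\varepsilon(x)\le C\exp(-\mu|x-x_\varepsilon|/\varepsilon)$.
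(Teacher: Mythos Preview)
Your overall strategy matches the paper's: establish existence via the Nehari/mountain-pass scheme with the modified nonlocal term, rescale around a maximum point, identify the weak limit as a solution of the autonomous problem at the concentration point $x_0$, show $G(x_0)=c_0$ by a Fatou-type energy comparison, and then invoke $(H_1)$ to kill the Poisson term and obtain the local limit equation. Part~(ii) is handled as you describe.

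There is, however, a circularity in how you treat the nonlocal term. Your remedy for the ``main obstacle'' splits the integral over a large ball and its complement and uses $h(x_0)=0$ (on the ball, via continuity) and the decay of $h$ at infinity (on the complement). But $h(x_0)=0$ is only known \emph{after} you have established $x_0\in\mathcal G$, which in turn requires first having passed to the limit in the equation and compared energies; so you cannot use it to justify the limit passage itself. Separately, the claim that $h(\varepsilon x+x_\varepsilon)\to0$ on $B_R^c$ ``by the decay in $(H_1)$'' is false: for each fixed $x$ one has $\varepsilon x+x_\varepsilon\to x_0$ regardless of $|x|$, so the decay of $h$ at infinity plays no role, and the convergence is not uniform on $B_R^c$. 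Also, contrary to your remark, the Coulomb energy \emph{is} lower semicontinuous here (nonnegative integrand, a.e.\ convergence of $\widetilde u_n$, Fatou).

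The paper avoids the circularity by first proving (its Claim~2) that the translated nonlocal term tested against $\varphi$ converges to $h^2(x_0)\int\phi_{\widetilde u}\,\widetilde u\,\varphi\,dx$ \emph{without} assuming anything about $h(x_0)$: boundedness of $\widetilde h_n\widetilde u_n$ in $L^2$ and of $\widetilde h_n\widetilde u_n^2$ in $L^{12/5}$, together with a.e.\ convergence, yield weak convergence of these products, and the Riesz potential then passes to the limit in $\mathcal D^{1,2}$ (hence $L^6$). This delivers the limit equation with coefficient $h^2(x_0)$; the Fatou comparison with $I_{\varepsilon_n}(\widetilde u_n)-\tfrac{1}{q_{i_0}}\langle I'_{\varepsilon_n}(\widetilde u_n),\widetilde u_n\rangle$ then gives $x_0\in\mathcal G$, and only at that point does $(H_1)$ enter to set $h(x_0)=0$. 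Reordering your argument in this way closes the gap.
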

%\begin{remark}
%\rm{}The results of Theorem \ref{the1} and \ref{the2} consider \eqref{11} with $m\geq3$, which  generalize ones in \cite{LFZ,MY}. 
%\end{remark}
\begin{remark}
\rm{}It is interesting to give some sufficient conditions to guarantee \eqref{14}, in terms of $V$, $K$ and $Q_i(i=1,2,\cdots,m)$. For example, let us consider the following conditions:\\
	\begin{tabular}{@{}rp{15.05cm}}
\rm{(1)}&$V_\infty=\sup\limits_{x\in\R^3}V(x)$, $Q_i^\infty=\inf\limits_{x\in\R^3}Q_i(x)(i=1,2,\cdots,m)$, $K(x)\equiv C$.\\
\rm{(2)}& There exists $\hat{s}$ such that 
$$ V_\infty\geq V(\hat{s}),\hspace{1ex} Q_i^\infty \le Q_i(\hat{s})(i=1,2,\cdots,m), K(x)\equiv C$$
 with one of the above inequalities being strict.
\end{tabular}
\end{remark}
If $V$, $K$, and $Q_i(i=1,2,\cdots,m)$ are not all constants, then each of the previous conditions (1) and (2) guarantee $c_\infty>c_0$ (see\cite{SCM,XB}).
\par
Finally, to get the nonexistence of ground state solution, we make the following assumption:
\vskip2mm
\par\noindent
$(f_5)$ $V(x)\geq V_\infty =V_0,$ and $Q_i(x)\le Q_i^{\infty}(1\le i \le m).$ $h(x)\equiv1$ or $h(x)$ satisfies $(H_1)$.
\vskip2mm
\par
The result of this work has the following statement.
\begin{theorem}\label{the3}
	Suppose that conditions $(f_1)$--$(f_5)$ hold. Then, for any $\varepsilon>0$, \eqref{1} has no ground state solution.
\end{theorem}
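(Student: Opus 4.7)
The plan is to establish the identity $c_\varepsilon = c_\varepsilon^\infty$, where $c_\varepsilon$ is the ground level attached to \eqref{1} and $c_\varepsilon^\infty$ is the ground level of the ``problem at infinity'' obtained from \eqref{1} by replacing each coefficient by its asymptotic constant $V_\infty,Q_i^\infty,K^\infty,h_\infty$; then to show that under $(f_5)$ this common infimum cannot be attained by any nontrivial solution. Write $I_\varepsilon,\mathcal{N}_\varepsilon$ for the energy functional and Nehari manifold associated to \eqref{1}, and $I_\varepsilon^\infty,\mathcal{N}_\varepsilon^\infty,c_\varepsilon^\infty:=\inf_{\mathcal{N}_\varepsilon^\infty}I_\varepsilon^\infty$ for the analogues of the constant-coefficient limit problem. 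Conditions $(f_4)$, $(f_5)$, combined with either $h\equiv 1$ or $(H_1)$, yield the pointwise inequalities $V(x)\geq V_\infty$, $Q_i(x)\le Q_i^\infty$, $K(x)\le K^\infty$, and $h(x)h(y)\geq h_\infty^2$, and consequently $I_\varepsilon(w)\geq I_\varepsilon^\infty(w)$ for every $w\in H^1(\R^3)$.

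\textbf{Lower bound} $c_\varepsilon\geq c_\varepsilon^\infty$: For each $u\in\mathcal{N}_\varepsilon$ there is a unique $t_u>0$ with $t_u u\in\mathcal{N}_\varepsilon^\infty$; the Nehari maximum property together with the pointwise comparison gives $I_\varepsilon(u)=\max_{s>0}I_\varepsilon(su)\geq I_\varepsilon(t_u u)\geq I_\varepsilon^\infty(t_u u)\geq c_\varepsilon^\infty$, and taking the infimum over $\mathcal{N}_\varepsilon$ yields the bound.

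\textbf{Upper bound} $c_\varepsilon\le c_\varepsilon^\infty$: Let $w$ be a positive, exponentially decaying ground state of the limit problem (whose existence is standard for the translation-invariant constant-coefficient functional). Extract $z_n\in\R^3$ with $|z_n|\to\infty$ along which $V(\cdot+z_n)\to V_\infty$, $Q_i(\cdot+z_n)\to Q_i^\infty$, and $K(\cdot+z_n)\to K^\infty$ locally uniformly, set $w_n(x):=w(x-z_n)$, and project onto $\mathcal{N}_\varepsilon$ by $s_n w_n\in\mathcal{N}_\varepsilon$ with $s_n\to1$. The decay of $w$ and dominated convergence force every non-kinetic integral of $w_n$ to tend to the corresponding limit-problem integral; the nonlocal Poisson contribution is translation invariant when $h\equiv 1$ and vanishes in the limit under $(H_1)$ because $h(\cdot+z_n)\to 0$ almost everywhere. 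Hence $c_\varepsilon\le I_\varepsilon(s_n w_n)\to I_\varepsilon^\infty(w)=c_\varepsilon^\infty$.

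\textbf{Contradiction and main obstacle.} Combining the two bounds yields $c_\varepsilon=c_\varepsilon^\infty$. If a ground state $v_\varepsilon$ realised this value, the lower-bound chain would collapse to equalities at $u=v_\varepsilon$, forcing in particular $I_\varepsilon(t_{v_\varepsilon}v_\varepsilon)=I_\varepsilon^\infty(t_{v_\varepsilon}v_\varepsilon)$, so every nonnegative integrand difference would vanish identically. Unique continuation for the elliptic equation satisfied by $v_\varepsilon$ makes $\{v_\varepsilon\ne 0\}$ dense in $\R^3$, so continuity of the coefficients would impose $V\equiv V_\infty$, $Q_i\equiv Q_i^\infty$, $K\equiv K^\infty$, and (in the $(H_1)$ case) $h\equiv 0$ on all of $\R^3$, contradicting the non-trivial variation of the potentials in the paper's framework. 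The key difficulty lies in the upper-bound step: since $V_\infty$ and $Q_i^\infty$ are defined merely as $\liminf$ and $\limsup$, scalar convergence $V(z_n)\to V_\infty$, $Q_i(z_n)\to Q_i^\infty$ does not automatically deliver pointwise convergence of the translated potentials on compact sets, and a Lions-type concentration/diagonal extraction exploiting continuity and boundedness of the coefficients is needed to select $\{z_n\}$ along which every coefficient converges simultaneously to its limit value.
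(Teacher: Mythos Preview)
Your proposal is correct and follows essentially the same route as the paper: establish $c_\varepsilon=c_\infty$ (the paper's Lemma~\ref{L61}) by combining the pointwise comparison $I_\varepsilon\ge I_\infty$ with a translation-to-infinity argument, then derive a contradiction from equality in the comparison chain at a hypothetical minimizer.

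Two small remarks. First, invoking unique continuation in the contradiction step is unnecessary: the putative ground state is already strictly positive everywhere by the strong maximum principle (see the end of the proof of Lemma~\ref{L39}), so the vanishing of each nonnegative integrand $(V-V_\infty)|v_\varepsilon|^2$, $(Q_i^\infty-Q_i)|v_\varepsilon|^{q_i}$, $(K^\infty-K)|v_\varepsilon|^6$ directly forces the coefficients to be constant. Second, the difficulty you flag about selecting $\{z_n\}$ along which \emph{all} coefficients converge simultaneously to their extreme values is genuine, and the paper treats it just as summarily (Lemma~\ref{L61} simply asserts the convergences ``using condition $(f_5)$ and $|\zeta_n|\to\infty$''); note, however, that a diagonal extraction alone cannot resolve it, since under bare $\liminf$/$\limsup$ hypotheses a common sequence realising all the extrema need not exist --- both arguments tacitly require that $V$, $Q_i$, $K$ have actual limits at infinity.
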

\par
 The remainder of this paper is organized as follows. In Section \ref{sec2}, we derive a variational setting for the problem and give some preliminaries. In section \ref{sec3} and \ref{sec4}, we prove the existence of positive ground state solutions for \eqref{22} with some properties, such as concentration, exponential decay etc. The proofs of Theorems 1.1 and 1.2 will be given in Section \ref{sec5}.   Section \ref{sec6} is dedicated to the proof of Theorem \ref{the3}.
\par
 Hereafter we use the following notations:\par
 $\bullet$ $H^1(\R^3)$ is the usual Sobolev space equipped with the inner product abd norm
 $$(u.v)=\int_{\R^3}(\nabla u\nabla v)+uv)dx;\hspace{2ex}||u||^2=\int_{ \R^3}(|\nabla u|^2+u^2)dx.$$\par
 $\bullet$ $\mathcal{D}^{1,2}(\R^3) $ is the completion of $C^{\infty}_0(\R^3)$ with respect to the norm 
 $$||u||_{\mathcal{D}^{1,2}}^2=\int_{ \R^3}|\nabla u|^2dx.$$\par
 $\bullet$ $L^r(\Omega)$, $1\le r\le\infty$, $\Omega\subset\R^3$, denotes a Lebesgue space,  the norm in $L^r(\R^3)$ is denoted by\par \hspace{1.25ex} $|u|_{r,\Omega}$,  where $\Omega$ is a proper subset of $\R^3$, by $|u|_r$ where $\Omega=\R^3$.\par
 $\bullet$ Denote the best constants for the embeddings of $H^1(\R^3)\hookrightarrow L^p(\R^3)(2\le p\le6)$ and\par \hspace{1.25ex} $\mathcal{D}^{1,2}(\R^3)\hookrightarrow$ $ L^6(\R^3)$ by $S_p$ and $S$, respectively. Then 
 \begin{equation}\label{spine}
 |u|_p\le S_p^{-\frac{1}{2}}||u||\quad \forall u\in H^1(\R^3),
 \end{equation}\par \hspace{2ex}
 and
 \begin{equation}
 |u|_6\le S^{-\frac{1}{2}}||u||_{\mathcal{D}^{1,2}}\quad \forall u\in \mathcal{D}^{1,2}(\R^3).
 \end{equation}\par
 $\bullet$ For any $R>0$ and for any $z\in\R^3$, $B_R(z)$ denotes the ball of rasius $R$ centered at $z$.\par
 $\bullet$ $C$, $C_k(k=0,1,\cdots ,n+1)$ denote various positive constants may different from line to  line.\par
 $\bullet$ $\rightarrow$ and $\rightharpoonup$ denote the strong and weak convergence in the related function space respectively.\par
 $\bullet$ $o_n(1)$ denotes any quantity which tends to zero when $n\to\infty$.
 \vskip4mm
 {\section{Preliminary results and variational framework}\label{sec2}}
\setcounter{equation}{0}
\vskip2mm
In this section,   we outline the variational framework of problem \eqref{11} and give some preliminary lemmas. From this section to the section \ref{subsec51}, we assume that the electronic potential $h\equiv1$. 
\par
For every $u\in H^{1}(\R^3)$, the linear functional $L_u$ defined in $\mathcal{D}^{1,2}(\R^3)$ by
	\begin{equation}
\begin{aligned}
L_u(v)=\int_{ \R^3}u^2vdx.
\end{aligned}
\end{equation}
Then it follows from Lax-Milgram theorem that there exists a unique $\phi_u\in\mathcal{D}^{1,2}(\R^3)$ such that 
	\begin{equation}\label{pse}
\begin{aligned}
\int_{\R^3}\nabla \phi_u\nabla vdx=\int_{\R^3}u^2vdx,\hspace{1ex}  \forall v\in\mathcal{D}^{1,2}(\R^3),
\end{aligned}
\end{equation}
which is a weak solution of $- \Delta \phi=4\pi u^2$ and the following representation formula holds
\begin{equation*}
\phi_u(x)=\int_{\R^3}\frac{u^{2}(y)}{|x-y|}dy=\frac{1}{|x|}*u^2.
\end{equation*}
Therefore, for any $\varepsilon>0$, we have
\begin{equation}\label{sp}
\begin{aligned}
\phi_{u/\varepsilon}=\frac{1}{\varepsilon^2}\int_{\R^3}\frac{u^{2}(y)}{|x-y|}dy=\varepsilon^{-2}\phi_u.
\end{aligned}
\end{equation}
\par
Let us define the operator $\Phi$:$H^{1}(\R^3)\to\mathcal{D}^{1,2}(\R^3)$
 as
\begin{equation*}
\Phi[u]=\phi_u.
\end{equation*}
We next state  some  properties of $\Phi$, which will be useful in the following.
\begin{lemma}\label{L21}\quad \vspace{0.5ex}\\
\begin{tabular}{@{}rl}
\rm{(\romannumeral1)} &$\Phi$ is continuous;\\
\rm{(\romannumeral2)} &$\Phi$ maps bounded sets into bounded sets;\\
\rm{(\romannumeral3)} &$\Phi(tu) = t^{2}\Phi(u)$;\\
\rm{(\romannumeral4)} &$||\phi_{u}||_{\mathcal{D}^{1,2}}\le S^{-\frac{1}{2}}|u|_{12/5}^{2}$.\vspace{0.5ex}\\
	\end{tabular}
\end{lemma}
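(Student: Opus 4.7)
The plan is to prove (iv) first, since it provides the quantitative estimate that drives the other three parts, and then derive (iii), (ii), and (i) in that order.

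For (iv), I would test the defining identity \eqref{pse} with $v=\phi_u\in\mathcal{D}^{1,2}(\R^3)$ to get
\[
\|\phi_u\|_{\mathcal{D}^{1,2}}^{2}=\int_{\R^{3}}u^{2}\phi_u\,dx.
\]
Then apply H\"older with exponents $6/5$ and $6$ to obtain $\int u^{2}\phi_u\le |u^{2}|_{6/5}|\phi_u|_{6}=|u|_{12/5}^{2}|\phi_u|_{6}$, and finish with the Sobolev embedding $|\phi_u|_{6}\le S^{-1/2}\|\phi_u\|_{\mathcal{D}^{1,2}}$. Dividing by $\|\phi_u\|_{\mathcal{D}^{1,2}}$ (the case $\phi_u\equiv 0$ being trivial) yields the claimed bound.

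Property (iii) is immediate from the linearity of \eqref{pse}: the function $t^{2}\phi_u$ satisfies $\int\nabla(t^{2}\phi_u)\nabla v\,dx=t^{2}\int u^{2}v\,dx=\int(tu)^{2}v\,dx$ for every $v\in\mathcal{D}^{1,2}(\R^{3})$, so by the uniqueness part of Lax--Milgram we conclude $\phi_{tu}=t^{2}\phi_u$. Property (ii) then follows at once: if $\|u\|\le M$, the Sobolev embedding $H^{1}(\R^{3})\hookrightarrow L^{12/5}(\R^{3})$ gives $|u|_{12/5}\le C_{M}$, and (iv) turns this into a uniform bound on $\|\phi_u\|_{\mathcal{D}^{1,2}}$.

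The main obstacle is (i), because the map $u\mapsto\phi_u$ is quadratic rather than linear, so continuity must be checked directly. My plan is to subtract the two defining identities for $\phi_{u_n}$ and $\phi_u$, obtaining
\[
\int_{\R^{3}}\nabla(\phi_{u_n}-\phi_u)\nabla v\,dx=\int_{\R^{3}}(u_n^{2}-u^{2})v\,dx\quad\forall v\in\mathcal{D}^{1,2}(\R^{3}).
\]
Testing with $v=\phi_{u_n}-\phi_u$ and repeating the H\"older/Sobolev argument of (iv) gives
\[
\|\phi_{u_n}-\phi_u\|_{\mathcal{D}^{1,2}}\le S^{-1/2}\,|u_n^{2}-u^{2}|_{6/5}.
\]
Factoring $u_n^{2}-u^{2}=(u_n-u)(u_n+u)$ and applying H\"older once more yields $|u_n^{2}-u^{2}|_{6/5}\le|u_n-u|_{12/5}\,|u_n+u|_{12/5}$. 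Since $u_n\to u$ in $H^{1}(\R^{3})$ implies $u_n\to u$ in $L^{12/5}(\R^{3})$ (with $|u_n+u|_{12/5}$ bounded), the right-hand side tends to zero, proving continuity of $\Phi$. This is the step I would expect to spend the most care on, because it requires identifying the correct Lebesgue exponent $12/5$ that interpolates between the quadratic nonlinearity and the dual of the $L^{6}$ Sobolev target.
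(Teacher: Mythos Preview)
Your proof is correct. Your argument for (iv) is exactly the paper's: test \eqref{pse} with $v=\phi_u$, apply H\"older with exponents $6$ and $6/5$, and use the Sobolev embedding $\mathcal{D}^{1,2}\hookrightarrow L^{6}$. For (iii) the paper merely says it is ``clear from the definition of $\phi_u$'', and for (i)--(ii) it defers to \cite{Cerami}; you instead supply self-contained proofs, deriving (ii) directly from (iv) and proving (i) by the difference estimate $\|\phi_{u_n}-\phi_u\|_{\mathcal{D}^{1,2}}\le S^{-1/2}|u_n-u|_{12/5}|u_n+u|_{12/5}$. This is a cleaner presentation than a literature reference and makes transparent that all four items flow from the single quantitative bound (iv), but it is not a genuinely different route---it is the natural elaboration of what the paper leaves implicit.
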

\begin{proof}
The proof of (i) and (ii) can be found in \cite{Cerami}. (iii) and (iv) are clear from the definition of $\phi_u$. Replacing $v$ by $\phi_{u}$ in \eqref{pse} and using H\"{o}lder inequality, we have 
$$||\phi_{u}||_{\mathcal{D}^{1,2}}^2=\int_{\R^3}\phi_{u}u^2 dx\le |\phi_{u}|_6 |u|_{12/5}^{2}\le S^{-\frac{1}{2}}||\phi_{u}||_{\mathcal{D}^{1,2}} |u|_{12/5}^{2},$$
and then (iv) holds.
\end{proof} 
\begin{lemma}\label{L22}
	Assume $u_n\rightharpoonup u \in H^{1}(\R^3) $.Then\vspace{0.5ex}\\
\begin{tabular}{@{}rl}
\rm{(\romannumeral1)} &$\Phi(u_n)\to\Phi(u)$ in $\mathcal{D}^{1,2}(\R^3)$;\vspace{0.5ex}\\
\rm{(\romannumeral2)} &$\int_{ \R^3}\phi_{u_n}u_n^2 dx\to\int_{ \R^3}\phi_{u}u^2 dx$;\vspace{0.5ex}\\
\rm{(\romannumeral3)} &$\int_{ {\R^3}} \phi_{u_n}u_n\varphi dx\to\int_{ \R^3}\phi_{u}u\varphi dx$,\hspace{1ex}  $\forall \varphi \in H^{1}( \R^3)$.
\end{tabular}	
\end{lemma}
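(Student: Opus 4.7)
The plan is to establish (i), (ii), and (iii) in order, combining Lemma \ref{L21} with the local compactness of Sobolev embeddings. First, since $u_n\rightharpoonup u$ in $H^1(\R^3)$ implies $\{u_n\}$ is bounded in $L^{12/5}(\R^3)$ via \eqref{spine}, Lemma \ref{L21}(iv) gives boundedness of $\{\phi_{u_n}\}$ in $\mathcal{D}^{1,2}(\R^3)$. By reflexivity I extract a subsequence with $\phi_{u_n}\rightharpoonup\psi$ weakly in $\mathcal{D}^{1,2}(\R^3)$, and I would identify $\psi=\phi_u$ by passing to the limit in \eqref{pse} tested against $v\in C_c^\infty(\R^3)$: the left-hand side converges to $\int\nabla\psi\nabla v\,dx$ by weak convergence, while Rellich--Kondrachov gives $u_n\to u$ in $L^4(\mathrm{supp}\,v)$ so $u_n^2\to u^2$ in $L^2(\mathrm{supp}\,v)$ and the right-hand side converges to $\int u^2 v\,dx$. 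Lax--Milgram uniqueness then forces $\psi=\phi_u$, and uniqueness of the weak limit upgrades this to the full sequence.

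For (ii), I would use the identity $\|\phi_{u_n}\|_{\mathcal{D}^{1,2}}^2=\int\phi_{u_n}u_n^2\,dx$ and split
$$\int\phi_{u_n}u_n^2\,dx-\int\phi_u u^2\,dx=\int(\phi_{u_n}-\phi_u)u_n^2\,dx+\int\phi_u(u_n^2-u^2)\,dx.$$
The second term is handled by a.e.\ convergence $u_n\to u$ (extracted from Rellich--Kondrachov on balls and a diagonal argument) together with dominated convergence; for the first, I decompose $\R^3=B_R\cup B_R^c$, use strong $L^{12/5}$-convergence on $B_R$, and estimate the tail on $B_R^c$ via the Hardy--Littlewood--Sobolev inequality together with the uniform $L^{12/5}$-bound on $\{u_n\}$ and the $L^6$-bound on $\{\phi_{u_n}\}$. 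Combined with the weak convergence from (i), this also yields convergence of norms and hence the strong convergence $\phi_{u_n}\to\phi_u$ in $\mathcal{D}^{1,2}(\R^3)$ claimed in (i).

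For (iii), I would apply the bilinear decomposition
$$\int\phi_{u_n}u_n\varphi\,dx-\int\phi_u u\varphi\,dx=\int(\phi_{u_n}-\phi_u)u_n\varphi\,dx+\int\phi_u(u_n-u)\varphi\,dx,$$
treating the first term via the strong convergence $\phi_{u_n}\to\phi_u$ in $L^6_{\mathrm{loc}}(\R^3)$ (a consequence of (i)) and H\"older's inequality, and the second via the weak convergence $u_n\rightharpoonup u$ paired with $\phi_u\varphi$, whose integrability is guaranteed by $\phi_u\in L^6(\R^3)$ and $\varphi\in H^1(\R^3)\hookrightarrow L^p(\R^3)$ for all $p\in[2,6]$, again with a $B_R$/$B_R^c$ cutoff to push the tails below any prescribed threshold.

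The hard part will be controlling the nonlocal Coulomb interaction at spatial infinity, since the embedding $H^1(\R^3)\hookrightarrow L^{12/5}(\R^3)$ is not compact and tails of $u_n$ can a priori escape to infinity. The resolution is to couple the strong local convergence from Rellich--Kondrachov with uniform tail estimates from Hardy--Littlewood--Sobolev: outside a sufficiently large ball $B_R$ both $\int_{B_R^c}\phi_{u_n}u_n^2\,dx$ and its limiting counterpart can be made uniformly small by choosing $R$ large enough, while inside $B_R$ the compact embedding delivers the convergence. This is the technical heart of the argument and is what prevents a purely algebraic proof from the weak identification in (i).
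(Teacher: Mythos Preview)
Your tail control in (ii) is where the argument breaks. You assert that outside a sufficiently large ball $B_R$ the quantity $\int_{B_R^c}\phi_{u_n}u_n^2\,dx$ can be made uniformly small in $n$, but weak convergence $u_n\rightharpoonup u$ in $H^1(\R^3)$ gives no uniform tail decay on $\{u_n\}$: the Hardy--Littlewood--Sobolev estimate together with the uniform $L^{12/5}$ and $L^6$ bounds you invoke yield only $\int_{B_R^c}\phi_{u_n}u_n^2\,dx\le C$, never $<\sigma$. In fact conclusions (i) and (ii) are false under the bare hypothesis $u_n\rightharpoonup u$: for $u_n(x)=w(x-ne_1)$ with $0\ne w\in H^1(\R^3)$ one has $u_n\rightharpoonup 0$, yet $\phi_{u_n}=\phi_w(\cdot-ne_1)$ has constant nonzero $\mathcal{D}^{1,2}$-norm and $\int_{\R^3}\phi_{u_n}u_n^2\,dx=\int_{\R^3}\phi_w w^2\,dx$ does not tend to $0$. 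So the gap in your proposal cannot be patched without an extra tightness hypothesis on $\{|u_n|^{12/5}\}$.

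The paper's proof takes a different route, identifying $\|\phi_{u_n}-\phi_u\|_{\mathcal{D}^{1,2}}$ with the operator norm $\|L_{u_n}-L_u\|_{\mathcal{L}(\mathcal{D}^{1,2},\R)}$ and attempting to show the latter vanishes, but it runs into the same obstruction: the radius $R$ there is chosen so that $|v|_{6,B_R^c}<\sigma$, and this $R$ depends on the individual test function $v$. Consequently the paper's estimate establishes only $L_{u_n}(v)\to L_u(v)$ for each fixed $v$ (equivalently $\phi_{u_n}\rightharpoonup\phi_u$ weakly in $\mathcal{D}^{1,2}(\R^3)$), not convergence in operator norm. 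Your weak-limit identification in the first paragraph is correct and coincides with what the paper's computation actually delivers; the subsequent upgrade to strong convergence, in both your proposal and the paper, is precisely the step that the translation counterexample rules out.
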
	
\begin{proof}
We borrow an idea from Cerami et al.\cite{Cerami2} to prove this lemma.
\par
(i)
By definition of $\Phi$ and $L_u$, we have 
$$||\Phi[u]||_{\mathcal{D}^{1,2}} = ||\phi_{u}||_{\mathcal{D}^{1,2}}=||L_u||_{\mathcal{L}(\mathcal{D}^{1,2},\R)},$$
to prove (i), it is enough to show that
$$||L_{u_n}-L_u||_{\mathcal{L}(\mathcal{D}^{1,2},\R)}\to 0, \quad n\to \infty.$$
For all $v\in \mathcal{D}^{1,2}(\R^3)$, then $v\in L^{6}(\R^3)$, thus for any $\sigma  >0$, there exists $R>0$ large enough, such that $|v|_{6, B_{R}^C(0)}<\sigma$.  Thus, for all $v\in \mathcal{D}^{1,2}(\R^3)$, we obtain
\begin{equation}\label{L221}
\begin{aligned}
0\le|L_{u_n}(v)-L_u (v)|&=\left|\int_{\R^3}(u_n^2 -u^2)vdx\right|\\
&\le \int_{B_{R}^C(0)}|u_n^2 -u^2||v|dx+\int_{B_{R}(0)}|u_n^2 -u^2||v|dx\\
&\le|v|_{6, B_{R}^C(0)}|u_n -u|_{\frac{12}{5}}|u_n+u|_{\frac{12}{5}}+\int_{B_{R}(0)}|u_n^2 -u^2||v|dx\\
&\le C\sigma +\int_{B_{R}(0)}|u_n^2 -u^2||v|dx.
\end{aligned}
\end{equation}
Furthermore, we may assume, going if necessary to a subsequence, $u_n\to u$ in $L_{loc}^{\frac{12}{5}}(\R^3)$. Hence, we have
\begin{equation}\label{L222}
\int_{B_{R}(0)}|u_n^2 -u^2||v|dx\le\left( \int_{{B_R}(0)}|u_n -u|^{\frac{12}{5}}dx\right)^{\frac{5}{12}}\left( \int_{B_{R}(0)}|u_n +u|^{\frac{12}{5}}dx\right)^{\frac{5}{12}} |v|_6 =o_n(1).
\end{equation}
Combining \eqref{L221} and \eqref{L222}, the desired conclusion is obtained.
\par
(ii)
At first, replacing $v$ by $\phi_{u_n}$ and repeating the argument used in the prove of (i), we can obtain that
$\int_{ \R^3}\phi_{u_n}(u_n^2 -u^2) dx\to 0$, as $n\to\infty$ holds. On the other hand, we observe that
$$\int_{ \R^3}(\phi_{u_n}-\phi_{u})u^2dx\le|u|_{\frac{12}{5}}^2|\phi_{u_n}-\phi_{u}|_6=o(1).$$
Indeed, since the embedding $\mathcal{D}^{1,2}(\R^3)\hookrightarrow L^6(\R^3)$ is continuous and $\Phi(u_n)\to\Phi(u)$ in $\mathcal{D}^{1,2}(\R^3)$.
Hence, we obtain   
\begin{equation*}
\left|\int_{ \R^3}(\phi_{u_n}u_n^2 -\phi_{u}u^2) dx\right|\le\left|\int_{ \R^3}\phi_{u_n}(u_n^2 -u^2) dx\right|+\left|\int_{ \R^3}(\phi_{u_n}-\phi_{u})u^2dx\right|=o_n(1),
\end{equation*} 
as desired.
\par
(iii)
 To show (iii), we first prove that 
 \begin{equation}\label{26}
 \left|\int_{ \R^3}\phi_{u}(u_n -u)\varphi dx\right|=o_n(1).
 \end{equation}
 In fact, using $\varphi\in H^1(\R^3)$ and $\phi_{u}\in\mathcal{D}^{1,2}(\R^3)$, it follows from H\"{o}lder inequality that $\phi_{u}\varphi\in L^2(\R^3)$.
 Since $u_n\rightharpoonup u $ in $H^{1}(\R^3) $and, then, in $L^2(\R^3) $. Thus, it is easy to see that \eqref{26} holds. On the other hand, we have
 \begin{equation}\label{27}
 \left|\int_{ \R^3}(\phi_{u_n}-\phi_{u})u_n \varphi dx\right|\le |u_n|_2|\phi_{u_n}-\phi_{u}|_6|\varphi|_3=o_n(1).
 \end{equation}
 Combining \eqref{26} and \eqref{27}, we can get that
 \begin{equation*}
 \begin{aligned}
 \left|\int_{ \R^3}(\phi_{u_n}u_n\varphi-\phi_{u}u\varphi)dx\right|
 &\le\left|\int_{ \R^3}\phi_{u}(u_n -u)\varphi dx\right|+\left|\int_{ \R^3}(\phi_{u_n}-\phi_{u})u_n \varphi dx\right|\\
 &=o_n(1),
 \end{aligned}
 \end{equation*}
 as desired. 	
\end{proof}
\begin{remark}
From Lemma \ref{L21} \text{(iv)}  and \cite[Lemma 3.1]{Cerami}, we obtain that $F\in C^{2}(H^1(\R^3),\R) $ is well-defined with
\begin{equation*}
F(u)=\int_{\R^3}\phi_{u}u^2 dx.
\end{equation*}
Moreover, the functional $F$ and its derivative $F'$ posses BL-splitting property \cite[Lemma 2.2]{LFZ2}, which is similar to Brezis--Lieb lemma \cite{Wi}.
\end{remark}
\par
Substituting \eqref{sp} into \eqref{11} and making the change of variable $ x\mapsto \varepsilon x$  , we can rewrite \eqref{1} as the following equivalent equation
\begin{equation}\label{22}
-\Delta u+\phi_u u+V(\varepsilon x)u=\sum\limits_{i=1}^{m} Q_i(\varepsilon x)|u|^{q_i-2}u+K(\varepsilon x)|u|^{4}u,\hspace{1ex}x\in\R^3.
\end{equation}
Obviously, $v(x)$ is a solution of \eqref{1} if and only if $u(x)=v(\varepsilon x)$ is a solution of  \eqref{22}. For any $\varepsilon >0$, we define the Hilbert space $H_{\varepsilon} =\{u\in H^{1}(\R^3):\int_{\R^3}V(\varepsilon x)|u|^2 dx<\infty\}$ 
equipped with the norm
\begin{equation*}
||u||_\varepsilon ^2 = \int_{\R^3}(|\nabla u|^2 +V(\varepsilon x)|u|^2) dx .
\end{equation*}
Since $V(x)$ is positive and bounded for all $x\in\R^3$, we have $H_\varepsilon=H^1(\R^3)$ and the norm $||\cdot||_{\varepsilon}$ is equivalent to $||\cdot||$. 
At this step, under our assumptions it is standard to see that \eqref{22} is variational and its solutions are the critical points of 
the functional $I_\varepsilon :H_{\varepsilon}\to \R$ given by
\begin{equation}
I_\varepsilon(u)=\frac{1}{2}||u||^2_\varepsilon +\frac{1}{4}\int_{ \R^3}\phi_{u}u^2 dx - \sum\limits_{i=1}^{m} \frac{1}{q_i}\int_{ \R^3}Q_i (\varepsilon x)|u|^{q_i}dx-\frac{1}{6}\int_{ \R^3} K(\varepsilon x)|u|^{6}dx.
\end{equation}
Moreover, $I_{\varepsilon} \in C^{2}(H_{\varepsilon},\R)$.
Next, we introduce the Nehari manifold  associated to $I_\varepsilon$ by
\begin{equation*}
 \mathcal{N}_\varepsilon= \{u\in H_\varepsilon \backslash \{0\}:\langle I_{\varepsilon}'(u),u \rangle=0\}.
\end{equation*}
Clearly, $u\in \mathcal{N}_\varepsilon$ if and only if 
\begin{equation}
||u||^2_\varepsilon +\int_{ \R^3}\phi_{u}u^2 dx = \sum\limits_{i=1}^{m} \int_{ \R^3}Q_i (\varepsilon x)|u|^{q_i}dx + 
\int_{ \R^3} K(\varepsilon x)|u|^{6}dx.
\end{equation}
Moreover, $I_\varepsilon$ is bounded from below on $\mathcal{N}_\varepsilon$. So we can consider the following problem:
\begin{equation}\label{lstene}
c_{\varepsilon}:=\inf\limits_{u\in \mathcal{N}_\varepsilon} I_{\varepsilon}(u).
\end{equation}
Now, we summarize some properties of $I_\varepsilon$ on $\mathcal{N}_\varepsilon$.
\begin{lemma}\label{L23}
For any $u\in H_\varepsilon \backslash \{0\}$, the following statements hold true.\vspace{0.5ex}\\
\begin{tabular}{@{}rl}
\rm{(\romannumeral1)} &There exists a unique $t_{\varepsilon}=t_{\varepsilon}(u)>0$ such that $t_{\varepsilon}u\in \mathcal{N}_\varepsilon$ and\\
\rule[-10pt]{0pt}{25pt}&\multicolumn{1}{c}{$I_{\varepsilon}(t_{\varepsilon}u):=\max\limits_{t\geq0} I_{\varepsilon}(tu)$.}\\
\rm{(\romannumeral2)} &There exist constants $0<\alpha_1 <\alpha_2$ independent of $\varepsilon >0$, such that $\alpha_1\le t_{\varepsilon}\le \alpha_2$.\\ 
\rm{(\romannumeral3)} &$I_\varepsilon$ is coercive  and bounded from below on $\mathcal{N}_\varepsilon$. \\
\rm{(\romannumeral4)} &There exists $\kappa  >0$ independent of $\varepsilon >0$, such that \\
\rule[-8pt]{0pt}{25pt}&\multicolumn{1}{c}{$||u||_\varepsilon\geq \kappa $,\quad $I_\varepsilon(u)\geq\frac{q_{i_0}-2}{2q_{i_0}}\kappa^2$, \quad $\forall u_{\varepsilon}\in \mathcal{N}_\varepsilon$.}\\
\end{tabular}
\end{lemma}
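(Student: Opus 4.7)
The plan is to carry out the standard Nehari-manifold analysis while tracking how the sign restrictions in $(f_2)$ interact with the exponents $q_i$. For (i) and (ii), fix $u\in H_\varepsilon\setminus\{0\}$ and study the fibering map $g_u(t):=I_\varepsilon(tu)$ for $t>0$. Since $4<q_i<6$ for every $i$, the quadratic piece $\tfrac{t^2}{2}\|u\|_\varepsilon^2$ dominates near $t=0$, so $g_u>0$ and $g_u'>0$ there; as $t\to\infty$ the critical term $-\tfrac{t^6}{6}\int K(\varepsilon x)|u|^6\,dx$ (or, if $K$ vanishes on $\mathrm{supp}\,u$, the superlinear term $-\tfrac{t^{q_m}}{q_m}\int Q_m(\varepsilon x)|u|^{q_m}\,dx$ with $q_m>4$ and $Q_m\ge 0$) drives $g_u\to-\infty$. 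A positive maximum is therefore attained at some $t_\varepsilon>0$ with $g_u'(t_\varepsilon)=0$, i.e.\ $t_\varepsilon u\in\mathcal{N}_\varepsilon$, and the bounds $\alpha_1\le t_\varepsilon\le\alpha_2$ independent of $\varepsilon$ follow from $g_u'(t_\varepsilon)=0$ combined with the embeddings \eqref{spine} and the $L^\infty$ bounds on $V,K,Q_i$ in $(f_3)$--$(f_4)$.

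The heart of the argument is (iii). For $u\in\mathcal{N}_\varepsilon$ I compute $I_\varepsilon(u)-\tfrac{1}{q_{i_0}}\langle I_\varepsilon'(u),u\rangle$ and obtain
\[
I_\varepsilon(u)=\left(\tfrac12-\tfrac1{q_{i_0}}\right)\|u\|_\varepsilon^2+\left(\tfrac14-\tfrac1{q_{i_0}}\right)\int\phi_u u^2\,dx+\sum_{i\neq i_0}\left(\tfrac1{q_{i_0}}-\tfrac1{q_i}\right)\int Q_i(\varepsilon x)|u|^{q_i}\,dx+\left(\tfrac1{q_{i_0}}-\tfrac16\right)\int K(\varepsilon x)|u|^6\,dx.
\]
Because $4<q_{i_0}<6$, the first, second and last coefficients are strictly positive; in the sum, the sign of $\tfrac1{q_{i_0}}-\tfrac1{q_i}$ is exactly opposite to the sign of $Q_i$ prescribed by $(f_2)$, so every product is non-negative. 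This yields the key estimate $I_\varepsilon(u)\ge\tfrac{q_{i_0}-2}{2q_{i_0}}\|u\|_\varepsilon^2$, which delivers both coercivity and boundedness below on $\mathcal{N}_\varepsilon$.

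For (iv), start again from the Nehari identity, drop the non-negative term $\int\phi_u u^2\,dx$, and bound the right-hand side using $|Q_i|_\infty$, $|K|_\infty$ and \eqref{spine} to obtain $\|u\|_\varepsilon^2\le C\sum_i\|u\|_\varepsilon^{q_i}+C\|u\|_\varepsilon^6$ with $C$ independent of $\varepsilon$. Since every exponent on the right exceeds $2$, this forces a uniform lower bound $\|u\|_\varepsilon\ge\kappa>0$, and substituting into the estimate from (iii) gives $I_\varepsilon(u)\ge\tfrac{q_{i_0}-2}{2q_{i_0}}\kappa^2$.

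The main obstacle is the uniqueness in (i): once $Q_{i_0}$ may change sign and the $Q_i$ with $i<i_0$ are non-positive, the fibering map is not a concave-convex sum and routine monotonicity arguments break down. My plan is to show $g_u''(t_\varepsilon)<0$ at every positive critical point. After using $g_u'(t_\varepsilon)=0$ to eliminate $\|u\|_\varepsilon^2$ from the formula for $g_u''(t_\varepsilon)$, and then substituting a second, $(q_{i_0}-2)$-weighted, multiple of the same identity to rearrange the $Q_i$-terms, the expression collapses to the sum of the strictly negative pieces $-(q_{i_0}-2)\|u\|_\varepsilon^2$ and $-(q_{i_0}-4)t_\varepsilon^2\int\phi_u u^2\,dx$, the non-positive piece $-(6-q_{i_0})t_\varepsilon^4\int K(\varepsilon x)|u|^6\,dx$, and a residual $-\sum_{i}(q_i-q_{i_0})t_\varepsilon^{q_i-2}\int Q_i(\varepsilon x)|u|^{q_i}\,dx$, which is itself non-positive because $(q_i-q_{i_0})$ and $Q_i$ share the same sign pattern under $(f_2)$. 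Hence every critical point of $g_u$ is a strict local maximum, so $t_\varepsilon$ is unique.
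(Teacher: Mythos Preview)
Your argument is correct and matches the paper's in all essential points: the coercivity identity $I_\varepsilon(u)-\tfrac{1}{q_{i_0}}\langle I_\varepsilon'(u),u\rangle$ in (iii), and the lower bound $\|u\|_\varepsilon\ge\kappa$ from the Nehari relation in (iv), are exactly what the paper does. One wording slip in (iii): the sign of $\tfrac{1}{q_{i_0}}-\tfrac{1}{q_i}$ actually \emph{agrees} with the sign of $Q_i$ under $(f_2)$ (both positive for $i>i_0$, both nonpositive for $i<i_0$), which is precisely why each product is nonnegative; your conclusion stands.

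For uniqueness in (i) the paper's presentation is slightly different but equivalent to yours. The paper introduces
\[
f(t)=-t^{2}\int\phi_{u}u^{2}\,dx+\sum_{i}t^{q_i-2}\int Q_i(\varepsilon x)|u|^{q_i}\,dx+t^{4}\int K(\varepsilon x)|u|^{6}\,dx,
\]
so that $g_u'(t)=t\bigl(\|u\|_\varepsilon^{2}-f(t)\bigr)$, and argues that $f$ is strictly increasing on any interval where $f>0$; since any $t_\varepsilon$ with $t_\varepsilon u\in\mathcal N_\varepsilon$ satisfies $f(t_\varepsilon)=\|u\|_\varepsilon^{2}>0$, two distinct such points are impossible. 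Your second-derivative computation is the same fact in disguise: because $g_u''(t_\varepsilon)=-t_\varepsilon f'(t_\varepsilon)$ at a critical point, your sign analysis $g_u''(t_\varepsilon)<0$ is exactly the inequality $tf'(t)\ge(q_{i_0}-2)f(t)>0$ that underlies the paper's monotonicity claim. Either packaging works; yours has the advantage of making the role of $(f_2)$ fully explicit.
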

\begin{proof}
(i) For $t>0$, we set
\begin{equation*}
\begin{aligned}
h(t):=I_\varepsilon(tu)=\frac{t^2}{2}||u||^2_\varepsilon +\frac{t^4}{4}\int_{ \R^3}\phi_{u}u^2 dx - \sum\limits_{i=1}^{m} \frac{t^{q_i}}{q_i}\int_{ \R^3}Q_i (\varepsilon x)|u|^{q_i}dx-\frac{t^6}{6}\int_{ \R^3} K(\varepsilon x)|u|^{6}dx.
\end{aligned}
\end{equation*}
From the Sobolev embedding inequalities \eqref{spine}, we obtain 
$$h(t)\geq \frac{t^2}{2}||u||^2_\varepsilon - \sum\limits_{i=1}^{m} \frac{t^{q_i}}{q_i}C_i||u||^{q_i}_\varepsilon-\frac{t^6}{6}C_{m+1}||u||^6_\varepsilon.$$
Since $4<q_i<6$, it is easy to see that $h(t)>0$ for small $t>0$.
Moreover,  it follows from Lemma \ref{L21} (iv) that, 
$$h(t)\le\frac{t^2}{2}||u||^2_\varepsilon +\frac{t^4}{4}C||u||^4_\varepsilon +\sum\limits_{i=1}^{m}\frac{t^{q_i}}{q_i}C_i||u||^{q_i}_\varepsilon-\frac{t^6}{6}\int_{ \R^3} K(\varepsilon x)|u|^{6}dx \to-\infty,$$
as $t\to\infty$. Consequently, $\max\limits_{t\geq 0} h(t)$ is achieved at $t_{\varepsilon}=t_{\varepsilon}(u)>0$, hence $h'(t_\varepsilon)=0$ and $t_{\varepsilon}u\in \mathcal{N}_\varepsilon$.
\par
Next, we show the uniqueness of $t_\varepsilon$. Arguing indirectly, if there exist $0<t_1<t_2$ such that $t_1 u, t_2 u\in \mathcal{N}_\varepsilon$. Let,
$$
f(t):=-{t^2}\int_{ \R^3}\phi_{u}u^2 dx + \sum\limits_{i=1}^{m} {t^{q_i-2}}\int_{ \R^3}Q_i (\varepsilon x)|u|^{q_i}dx+{t^4}\int_{ \R^3} K(\varepsilon x)|u|^{6}dx.
$$
Taking account of $4<q_1<q_2<\cdots<q_m<6$ and $(f_2)$, we have $f(t)$ is a strictly increasing function on any interval where $f(t)>0$.
Then, we deduce from $h'(t_1)=0$ and $h'(t_2)=0$ that
$$f(t_1)=||u||_\varepsilon^2\hspace{1ex}\text{and}\hspace{1ex} f(t_2)=||u||_\varepsilon^2,$$
which is a contradiction.	
\par
(ii) Since $t_\varepsilon u\in \mathcal{N}_\varepsilon$, then $f(t_\varepsilon)=0$, i.e.,
\begin{equation}
-{t_\varepsilon^2}\int_{ \R^3}\phi_{u}u^2 dx + \sum\limits_{i=1}^{m} {t_\varepsilon^{q_i-2}}\int_{ \R^3}Q_i (\varepsilon x)|u|^{q_i}dx+{t_\varepsilon^4}\int_{ \R^3} K(\varepsilon x)|u|^{6}dx=||u||_\varepsilon^2.
\end{equation}
It follows from that 
\begin{equation}\label{213}
||u||_\varepsilon^2\le \sum\limits_{i=1}^{m} t_\varepsilon^{q_i-2}C_i||u||^{q_i}_\varepsilon+t_\varepsilon^4C_{m+1}||u||^6_\varepsilon,
\end{equation}
and
$${t_\varepsilon^4}\int_{ \R^3} |u|^{6}dx\le ||u||_\varepsilon^2+ {t_\varepsilon^2}C||u||^4_\varepsilon +\sum\limits_{i=1}^{m}t^{q_i-2}C_i||u||^{q_i}_\varepsilon. $$
Then, it is easy to see that there exist constants $0<\alpha_1 <\alpha_2$ independent of $\varepsilon >0$, such that $\alpha_1\le t_{\varepsilon}\le \alpha_2$.
\par 
(iii) For $u\in\mathcal{N}_\varepsilon$, we have 
\begin{equation}\label{bddlow}
\begin{aligned}
I_\varepsilon (u)
= & \hspace{0.4em}I_\varepsilon (u)-\frac{1}{q_{i_0}}\langle I_{\varepsilon}'(u),u \rangle\\
=& \left(\frac{1}{2}-\frac{1}{q_{i_0}}\right)||u||^2_\varepsilon +\left(\frac{1}{4}-\frac{1}{q_{i_0}}\right)\int_{ \R^3}\phi_{u}u^2 dx \\
&- \sum\limits_{i=1}^{i_0-1} \left(\frac{1}{q_i}-\frac{1}{q_{i_0}}\right)\int_{ \R^3}Q_i (\varepsilon x)|u|^{q_i}dx+\sum\limits_{i=i_0+1}^{m}\left (\frac{1}{q_{i_0}}-\frac{1}{q_i}\right)\int_{ \R^3}Q_i (\varepsilon x)|u|^{q_i}dx\\
&+\left({\frac{1}{q_{i_0}}-\frac{1}{6}}\right)\int_{ \R^3} K(\varepsilon x)|u|^{6}dx.\\
\geq&\left(\frac{1}{2}-\frac{1}{q_{i_0}}\right)||u||^2_\varepsilon>0,
\end{aligned}
\end{equation}
which implies that $I_\varepsilon$ is coercive  and bounded from below on $\mathcal{N}_\varepsilon$.
\par 
(iv) The conclusion is immediate by (iii) and taking $t_\varepsilon=1$ in \eqref{213}.
\end{proof}
\begin{lemma}\label{L24}
$I_\varepsilon$ has the mountain pass geometry structure.\\
\begin{tabular}{@{}rl}
\rm{(\romannumeral1)} &There exist $\alpha, \rho>0$ independent of $\varepsilon$, such that $I_\varepsilon(u)\geq \alpha$ for $||u||_\varepsilon = \rho $;\\
\rm{(\romannumeral2)} &There exists an $e\in H_{\varepsilon}$ satisfying $||e||_\varepsilon \geq\rho $ such that $I_{\varepsilon}(e)<0$.
\end{tabular}
\end{lemma}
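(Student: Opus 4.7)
The plan is to reduce both claims to size estimates of the nonlinear parts of $I_\varepsilon$ via the Sobolev embeddings, exploiting that the nonlocal term is nonnegative and that the $L^p$--norms are controlled by $\|\cdot\|_\varepsilon$ uniformly in $\varepsilon$.

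For (i), I would first drop the term $\frac{1}{4}\int_{\R^3}\phi_u u^2\,dx\ge 0$ from $I_\varepsilon(u)$, so only the two negative terms need to be controlled. The key observation is that $V_0>0$ from $(f_3)$ makes the norm $\|\cdot\|_\varepsilon$ equivalent to the standard $H^1$--norm with constants depending only on $V_0$ (not on $\varepsilon$), so Sobolev embeddings yield $|u|_{q_i}\le C_i'\|u\|_\varepsilon$ and $|u|_6\le C'\|u\|_\varepsilon$ with $\varepsilon$--independent constants. Using the boundedness of $Q_i$ and $K$ from $(f_3)$, $(f_4)$, I would estimate
\begin{equation*}
I_\varepsilon(u)\ \ge\ \frac{1}{2}\|u\|_\varepsilon^{2}\ -\ \sum_{i=1}^{m} A_i\|u\|_\varepsilon^{q_i}\ -\ B\|u\|_\varepsilon^{6}.
\end{equation*}
Since $q_i>4>2$ and $6>2$, the quadratic term dominates as $\|u\|_\varepsilon\to 0^{+}$; one may thus fix $\rho>0$ small (independent of $\varepsilon$) so that the right--hand side is bounded below by some $\alpha>0$ whenever $\|u\|_\varepsilon=\rho$.

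For (ii), I would reuse the upper estimate already derived inside the proof of Lemma~2.3(i). Pick any $u_0\in C_0^\infty(\R^3)\setminus\{0\}$ chosen so that $\int_{\R^3}K(\varepsilon x)|u_0|^{6}\,dx>0$; this is possible because $K\not\equiv 0$ in the critical setting (assumption $(f_4)$). Then
\begin{equation*}
I_\varepsilon(tu_0)\ \le\ \frac{t^2}{2}\|u_0\|_\varepsilon^{2}+\frac{t^4}{4}C\|u_0\|_\varepsilon^{4}+\sum_{i=1}^{m}\frac{t^{q_i}}{q_i}C_i\|u_0\|_\varepsilon^{q_i}-\frac{t^6}{6}\!\int_{\R^3}K(\varepsilon x)|u_0|^{6}\,dx.
\end{equation*}
As $q_i<6$, the $t^6$ term with positive coefficient forces $I_\varepsilon(tu_0)\to-\infty$ as $t\to\infty$, so I can choose $t_0$ large enough that simultaneously $\|t_0 u_0\|_\varepsilon>\rho$ and $I_\varepsilon(t_0 u_0)<0$, and set $e:=t_0 u_0$.

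The only nontrivial point is the $\varepsilon$--independence in (i): it relies on extracting all Sobolev constants in terms of $V_0$ and the uniform bounds $\|V\|_\infty,\|Q_i\|_\infty,\|K\|_\infty$ from $(f_3)$ and $(f_4)$, after which everything is routine. Part (ii) is an immediate consequence of the computation already displayed in Lemma~2.3(i), so no additional work is needed beyond choosing $u_0$ with $\int K(\varepsilon x)|u_0|^6>0$.
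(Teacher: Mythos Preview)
Your proposal is correct and follows essentially the same approach as the paper: drop the nonnegative nonlocal term and use uniform Sobolev embeddings for (i), and use the $t^6$ term coming from $K$ to push $I_\varepsilon(tu)\to-\infty$ for (ii). If anything, you are slightly more careful than the paper in (ii), since you explicitly require $\int_{\R^3}K(\varepsilon x)|u_0|^6\,dx>0$ (the paper's displayed upper bound there carries a sign slip on the $Q_i$ sum; your version with the $+$ sign, matching Lemma~2.3(i), is the correct one).
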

\begin{proof}
(i) For any $u\in H_\varepsilon \backslash\{0\}$, we deduce from the Sobolev embedding inequalities \eqref{spine} that
\begin{equation*}
\begin{aligned}
I_\varepsilon(u)&=\frac{1}{2}||u||^2_\varepsilon +\frac{1}{4}\int_{ \R^3}\phi_{u}u^2 dx - \sum\limits_{i=1}^{m}\frac{1}{q_i}\int_{ \R^3}Q_i (\varepsilon x)|u|^{q_i}dx-\frac{1}{6}\int_{ \R^3} K(\varepsilon x)|u|^{6}dx\\
&\geq \frac{1}{2}||u||^2_\varepsilon - \sum\limits_{i=1}^{m} \frac{1}{q_i}C_i||u||^{q_i}_\varepsilon-\frac{1}{6}C_{n+1}||u||^6_\varepsilon.
\end{aligned}
\end{equation*}
Set $||u||_\varepsilon=\rho$ small enough, such that  $I_\varepsilon(u)\geq \alpha$.
\par
(ii) For any $u\in H_\varepsilon \backslash\{0\}$.
\begin{equation*}
\begin{aligned}
I_\varepsilon(tu)&=\frac{t^2}{2}||u||^2_\varepsilon +\frac{t^4}{4}\int_{ \R^3}\phi_{u}u^2 dx - \sum\limits_{i=1}^{m} \frac{t^{q_i}}{q_i}\int_{ \R^3}Q_i (\varepsilon x)|u|^{q_i}dx-\frac{t^6}{6}\int_{ \R^3} K(\varepsilon x)|u|^{6}dx\\
&\le\frac{t^2}{2}||u||^2_\varepsilon +\frac{t^4}{4}C||u||^4_\varepsilon -\sum\limits_{i=1}^{m}\frac{t^{q_i}}{q_i}C_i||u||^{q_i}_\varepsilon-\frac{t^6}{6}\int_{ \R^3} K(\varepsilon x)|u|^{6}dx.
\end{aligned}
\end{equation*}
Since $4<q_i<6$ and $K(x)\geq 0$, there exist $e:=t'u$ for some $t'>0$ large enough such that  $||e||_\varepsilon \geq\rho $ and $I_{\varepsilon}(e)<0$.
\end{proof}
\begin{lemma}\label{L25}
For any $\varepsilon>0$, we can define 
\begin{equation*}
c_{\varepsilon}^{*}:=\inf\limits_{u\in H_\varepsilon \backslash \{0\}}\max\limits_{t\geq 0} I_{\varepsilon}(tu),\quad
c_{\varepsilon}^{**}:=\inf\limits_{\gamma \in \Gamma }\sup\limits_{t\in[0,1]} I_{\varepsilon}(\gamma(t)),
\end{equation*}
where $\Gamma=\{\gamma\in C([0,1],H_\varepsilon);\gamma(0)=0,I_\varepsilon(\gamma(1))<0)\}.$\\
Then, 
\begin{equation}\label{3eq}
c_{\varepsilon}=c_{\varepsilon}^{*}=c_{\varepsilon}^{**}.
\end{equation}
\end{lemma}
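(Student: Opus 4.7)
The plan is to establish the three equalities cyclically by showing $c_\varepsilon = c_\varepsilon^*$ directly from the Nehari-manifold characterization, then $c_\varepsilon^{**} \leq c_\varepsilon^*$ via straight-line paths, and finally $c_\varepsilon \leq c_\varepsilon^{**}$ by proving every admissible path meets $\mathcal{N}_\varepsilon$.

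First I would prove $c_\varepsilon = c_\varepsilon^*$. By Lemma \ref{L23}(i), for every $u \in H_\varepsilon \setminus\{0\}$ there exists a unique $t_\varepsilon(u) > 0$ with $t_\varepsilon(u)u \in \mathcal{N}_\varepsilon$ and $\max_{t \geq 0} I_\varepsilon(tu) = I_\varepsilon(t_\varepsilon(u)u) \geq c_\varepsilon$; taking infimum over $u \neq 0$ yields $c_\varepsilon^* \geq c_\varepsilon$. Conversely, if $u \in \mathcal{N}_\varepsilon$ then $t_\varepsilon(u) = 1$ by uniqueness, so $I_\varepsilon(u) = \max_{t \geq 0} I_\varepsilon(tu) \geq c_\varepsilon^*$, giving $c_\varepsilon \geq c_\varepsilon^*$.

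Next, for $c_\varepsilon^{**} \leq c_\varepsilon^*$, fix any $u \in H_\varepsilon \setminus \{0\}$. By the argument in Lemma \ref{L24}(ii), there exists $T = T(u)$ large enough so that $I_\varepsilon(Tu) < 0$. Setting $\gamma_u(t) := tTu$ gives a path in $\Gamma$, and
\begin{equation*}
\sup_{t \in [0,1]} I_\varepsilon(\gamma_u(t)) = \max_{s \in [0,T]} I_\varepsilon(su) \leq \max_{s \geq 0} I_\varepsilon(su).
\end{equation*}
Taking the infimum over $u$ on both sides gives $c_\varepsilon^{**} \leq c_\varepsilon^*$.

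The core of the proof is $c_\varepsilon \leq c_\varepsilon^{**}$: I claim every $\gamma \in \Gamma$ intersects $\mathcal{N}_\varepsilon$. Define $\varphi(s) := t_\varepsilon(\gamma(s))$ for those $s$ with $\gamma(s) \neq 0$; uniqueness in Lemma \ref{L23}(i) combined with the structure of the fibering function $h(t) = I_\varepsilon(tu)$ makes $\varphi$ continuous on its domain. As $s \to 0^+$, the continuity of $\gamma$ gives $\|\gamma(s)\|_\varepsilon \to 0$, and the Sobolev estimates in the proof of Lemma \ref{L23} imply $\langle I_\varepsilon'(\gamma(s)), \gamma(s)\rangle > 0$ for small $\|\gamma(s)\|_\varepsilon$, which forces $\varphi(s) > 1$. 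At $s = 1$, if we had $\varphi(1) \geq 1$, then $t \mapsto I_\varepsilon(t\gamma(1))$ would be nondecreasing on $[0,1]$, giving $I_\varepsilon(\gamma(1)) \geq I_\varepsilon(0) = 0$, contradicting $I_\varepsilon(\gamma(1)) < 0$; hence $\varphi(1) < 1$. By the intermediate value theorem there exists $s^* \in (0,1)$ with $\varphi(s^*) = 1$, i.e.\ $\gamma(s^*) \in \mathcal{N}_\varepsilon$. Therefore
\begin{equation*}
\sup_{t \in [0,1]} I_\varepsilon(\gamma(t)) \geq I_\varepsilon(\gamma(s^*)) \geq c_\varepsilon,
\end{equation*}
and taking the infimum over $\gamma \in \Gamma$ yields $c_\varepsilon^{**} \geq c_\varepsilon$.

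The main obstacle is the last step, and specifically the continuity of $s \mapsto t_\varepsilon(\gamma(s))$ together with the limiting behavior as $s \to 0^+$. Continuity needs the implicit function theorem (or a direct argument via uniqueness and strict monotonicity of the auxiliary function $f(t)$ in the proof of Lemma \ref{L23}) applied to the equation $\langle I_\varepsilon'(tu), tu\rangle = 0$; the limit as $s \to 0^+$ must handle the possibility $\gamma(s) = 0$ on a small initial subinterval, which I would resolve by noting that the claim only requires intersection at \emph{some} $s^* \in (0,1]$ and restricting $\varphi$ to the first component where $\gamma(s) \neq 0$.
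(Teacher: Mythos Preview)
Your proof is correct and the first two steps ($c_\varepsilon = c_\varepsilon^*$ and $c_\varepsilon^{**}\le c_\varepsilon^*$) match the paper's exactly. For the crucial inequality $c_\varepsilon \le c_\varepsilon^{**}$, however, the paper takes a shorter route than your fibering-map argument: it works directly with the continuous function $s\mapsto \langle I_\varepsilon'(\gamma(s)),\gamma(s)\rangle$ rather than with $s\mapsto t_\varepsilon(\gamma(s))$. Concretely, the paper decomposes $H_\varepsilon$ into $H_\varepsilon^+=\{u:\langle I_\varepsilon'(u),u\rangle>0\}\cup\{0\}$ and $H_\varepsilon^-=\{u:\langle I_\varepsilon'(u),u\rangle<0\}$, notes that $\gamma(0)=0\in H_\varepsilon^+$, and uses the inequality from \eqref{bddlow},
\[
\tfrac{1}{q_{i_0}}\langle I_\varepsilon'(\gamma(1)),\gamma(1)\rangle \le I_\varepsilon(\gamma(1))<0,
\]
to place $\gamma(1)\in H_\varepsilon^-$. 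Connectedness then forces the path to cross $\mathcal{N}_\varepsilon$.

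The advantage of the paper's approach is that $\langle I_\varepsilon'(\gamma(s)),\gamma(s)\rangle$ is defined and continuous on all of $[0,1]$, so the issues you flagged---continuity of $t_\varepsilon(\cdot)$ via the implicit function theorem, and handling the possible set where $\gamma(s)=0$---simply do not arise. Your argument buys nothing extra here; it reaches the same conclusion through a more delicate intermediate object. If you wish to streamline your write-up, replacing the fibering map $\varphi$ by the scalar function $g(s)=\langle I_\varepsilon'(\gamma(s)),\gamma(s)\rangle$ and invoking \eqref{bddlow} for the sign at $s=1$ would eliminate the last paragraph of caveats entirely.
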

\begin{proof}
By Lemma \ref{L23} (i), we have 
$$c_{\varepsilon}=\inf\limits_{u\in \mathcal{N}_\varepsilon}I_\varepsilon(u)=\inf\limits_{u\in H_\varepsilon \backslash \{0\}}I_\varepsilon(t_\varepsilon u)=\inf\limits_{u\in H_\varepsilon \backslash \{0\}}\max\limits_{t\geq 0}I_\varepsilon(tu)=c_{\varepsilon}^{*}.$$
Moreover, by Lemma \ref{L24} (ii), for any $u\in H_\varepsilon \backslash \{0\}$, there exists $k>0$ large enough, such that $I_\varepsilon(ku)<0$, set $\beta(t)=tku$, $t\in[0,1]$, then $\beta\in\Gamma$. Indeed, $\beta(0)=0$, $\beta(1)=ku$ and $I_\varepsilon(\beta(1))<0$. Thus,
$$\max\limits_{t\geq 0} I_{\varepsilon}(tu)=\sup\limits_{t\in[0,1]} I_{\varepsilon}(tku)=\sup\limits_{t\in[0,1]} I_{\varepsilon}(\beta(t))\geq c_{\varepsilon}^{**}.$$
It follows that $c_{\varepsilon}^{*}\geq c_{\varepsilon}^{**}$. On the other hand, the Nehari manifold $\mathcal{N}_\varepsilon$ separates $H_\varepsilon$ into two components
$$H_\varepsilon^+ =\{u\in H_\varepsilon:\langle I_{\varepsilon}'(u),u \rangle >0\}\cup\{0\}$$
and
$$H_\varepsilon^- =\{u\in H_\varepsilon:\langle I_{\varepsilon}'(u),u \rangle <0\}.$$
It follows from \eqref{bddlow} that $I_\varepsilon(u)\geq 0$ for $u\in H_\varepsilon^+$, and  $\frac{1}{q_{i_0}}\langle I_{\varepsilon}'(\gamma(1)),\gamma(1)\rangle\leq I_\varepsilon(\gamma(1))<0$. Thus any $\gamma\in\Gamma$ has to cross $\mathcal{N}_\varepsilon$, since $ \gamma(0)\in H_\varepsilon^+ $ and $\gamma(1)\in H_\varepsilon^- $, and so $c_{\varepsilon}^{**}\geq c_{\varepsilon}$. The proof is complete.
\end{proof}
In order to study \eqref{22}, we need some results about the autonomous problem of \eqref{22}. For $a>0$, $b_{i_0}>0$, $(j-i_0)b_j>0(j=1,2,\cdots,m+1; j\neq i_0)$, consider the autonomous problem:
\begin{equation}\label{ap}
-\Delta u+\phi_u u+au=\sum\limits_{i=1}^{m} b_i|u|^{q_i-2}u+b_{m+1}|u|^{4}u,\quad x\in\R^3.
\end{equation}	
The associated energy functional is 
\begin{equation*}
\begin{aligned}
I_{ab}(u)=\frac{1}{2}||u||^2_{a} + \frac{1}{4}\int_{ \R^3}\phi_{u}u^2 dx - \sum\limits_{i=1}^{m} \frac{b_i}{q_i}\int_{ \R^3}|u|^{q_i}dx-\frac{b_{m+1}}{6}\int_{ \R^3}|u|^{6}dx,
\end{aligned}
\end{equation*}
where $||u||^2_{a}=\int_{\R^3}(|\nabla u|^2 +a|u|^2) dx$.
By Lemma \ref{L25}, we have 
\begin{equation}\label{217}
c_{ab}:=\inf\limits_{u\in \mathcal{N}_{ab}} I_{ab}(u)=\inf\limits_{u\in H_\varepsilon \backslash \{0\}}\max\limits_{t\geq 0} I_{ab}(tu)=\inf\limits_{\gamma \in \Gamma_{ab} }\sup\limits_{t\in[0,1]} I_{ab}(\gamma(t)),
\end{equation}
where $\mathcal{N}_{ab}= \{u\in H_\varepsilon \backslash \{0\}:\langle I_{ab}'(u),u \rangle=0\}$. $\Gamma=\{\gamma\in C([0,1],H_\varepsilon);\gamma(0)=0,I_\varepsilon(\gamma(1)<0)\}.$
\begin{lemma}\label{L27}
	Problem \eqref{ap} has at least a positive ground state solution in $H_\varepsilon$.
\end{lemma}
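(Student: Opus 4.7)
The plan is to realize $c_{ab}$ as a mountain pass minimax, extract a bounded $(PS)_{c_{ab}}$ sequence for $I_{ab}$, rule out vanishing and concentration by a Brezis--Nirenberg type strict inequality, and conclude via translation invariance. Since the autonomous problem (2.15) has the same structure as (2.9) with constant coefficients, and the sign condition $(j-i_0)b_j>0$ mirrors $(f_2)$, the proofs of Lemmas \ref{L23} and \ref{L24} transfer verbatim to $I_{ab}$: it has mountain pass geometry, $\mathcal{N}_{ab}$ is a natural constraint, and the characterization (2.16) yields $c_{ab}>0$. To later apply the strong maximum principle, I first replace the nonlinearity by $\sum_i b_i(u^+)^{q_i-1}+b_{m+1}(u^+)^5$ and work with the corresponding functional $I_{ab}^+$; the mountain pass geometry and value $c_{ab}$ are unchanged. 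The mountain pass theorem then produces a $(PS)_{c_{ab}}$ sequence $\{u_n\}\subset H^1(\R^3)$, bounded by the coercivity argument in Lemma \ref{L23}(iii).

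The crucial step is to locate $c_{ab}$ strictly below the critical compactness threshold:
\begin{equation*}
0<c_{ab}<\frac{1}{3}\,b_{m+1}^{-1/2}S^{3/2}.
\end{equation*}
Indeed, if the sequence vanished in Lions' sense, then $u_n\to 0$ in $L^{q_i}(\R^3)$ for every $i\in\{1,\ldots,m\}$ (since $q_i\in(4,6)\subset(2,6)$); combining $\langle(I_{ab}^+)'(u_n),u_n\rangle=o(1)$ with $\int\phi_{u_n}u_n^2\geq 0$ and the Sobolev inequality $\|u_n\|_a^2\geq S|u_n|_6^2$ would force $c_{ab}\geq\tfrac{1}{3}b_{m+1}^{-1/2}S^{3/2}$. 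The strict inequality is obtained by the classical Brezis--Nirenberg test-function computation: testing with $u_\delta(x)=\psi(x)U_\delta(x)$, where $\psi$ is a smooth cut-off and $U_\delta(x)=(3\delta^2)^{1/4}(\delta^2+|x|^2)^{-1/2}$ is the Aubin--Talenti extremal on $\R^3$, one evaluates $\max_{t\geq 0}I_{ab}(tu_\delta)$. The leading term is $\tfrac{1}{3}b_{m+1}^{-1/2}S^{3/2}$, the positive corrections coming from $a|u_\delta|_2^2$ and from $\int\phi_{u_\delta}u_\delta^2$ are of order $\delta$ and $\delta^3$ respectively, and the negative correction from $-\tfrac{b_{i_0}}{q_{i_0}}|u_\delta|_{q_{i_0}}^{q_{i_0}}$ is of order $\delta^{3-q_{i_0}/2}$. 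Since $q_{i_0}\in(4,6)$ gives $3-q_{i_0}/2<1$, the subcritical gain strictly dominates the positive $O(\delta)$ loss for $\delta$ small, yielding the desired strict bound.

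With this in hand, vanishing is excluded and there exist $y_n\in\R^3$ and $\rho>0$ with $\int_{B_1(y_n)}u_n^2\,dx\geq\rho$. By translation invariance of $I_{ab}^+$, $w_n(\cdot)=u_n(\cdot+y_n)$ is still a bounded $(PS)_{c_{ab}}$ sequence, so $w_n\rightharpoonup w\not\equiv 0$ in $H^1(\R^3)$ up to a subsequence. Lemma \ref{L22} and standard density arguments show $w$ is a weak solution of the modified equation, so $w\in\mathcal{N}_{ab}$ and $I_{ab}^+(w)\geq c_{ab}$. The Brezis--Lieb splitting for $I_{ab}^+(w_n)$ and $\langle(I_{ab}^+)'(w_n),w_n\rangle$, together with the BL-splitting for the nonlocal term recorded in the remark after Lemma \ref{L22} and the strict inequality above, rules out a critical bubble in $w_n-w$: any such bubble would carry at least $\tfrac{1}{3}b_{m+1}^{-1/2}S^{3/2}$ of energy, forcing $c_{ab}=\lim I_{ab}^+(w_n)\geq I_{ab}^+(w)+\tfrac{1}{3}b_{m+1}^{-1/2}S^{3/2}>c_{ab}$, a contradiction. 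Hence $w_n\to w$ strongly, $I_{ab}^+(w)=c_{ab}$, and $w$ is a ground state of the modified problem. Testing the equation against $w^-$ shows $w\geq 0$, and the strong maximum principle applied to $-\Delta w+(a+\phi_w)w\geq 0$ yields $w>0$, so $w$ solves (2.15) and realizes $c_{ab}$.

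The main obstacle is the Brezis--Nirenberg test-function estimate, because the nonlocal Coulomb term $\int\phi_{u_\delta}u_\delta^2$ introduces an additional positive contribution that must not overwhelm the subcritical gain. Fortunately its scaling $O(\delta^3)$ is of lower order than both $O(\delta)$ and $O(\delta^{3-q_{i_0}/2})$, so it is harmless, and the exponent restriction $q_{i_0}>4$ is precisely what pushes the minimax value strictly below the compactness threshold in dimension three. The remaining ingredients (PS sequence, translation, Brezis--Lieb decomposition, maximum principle) are standard once this quantitative estimate is secured.
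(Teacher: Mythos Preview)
Your proof is correct and follows essentially the same approach as the paper, which itself only sketches the argument (mountain pass geometry, bounded $(PS)$ sequence, Brezis--Nirenberg estimate on the critical level, then concentration-compactness via translation invariance) and refers to \cite{hezou,HLR2} for details. Two minor numerical slips do not affect the argument: the nonlocal correction $\int\phi_{u_\delta}u_\delta^2\le S^{-1}|u_\delta|_{12/5}^4$ is $O(\delta^2)$ rather than $O(\delta^3)$ (cf.\ the paper's estimate $CS^{-1}O(\sigma^2)$ in Lemma~\ref{cric2}), and you should also account for the positive contributions $-\tfrac{b_i}{q_i}|u_\delta|_{q_i}^{q_i}=O(\delta^{(6-q_i)/2})$ with $i<i_0$ (where $b_i<0$), but since $q_i<q_{i_0}$ these are of higher order in $\delta$ than the $b_{i_0}$ gain and are harmless.
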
	
\begin{proof}
The idea of the proofs is sketched as follows. At first,  similar  to Lemma \ref{L24}, we can show that $I_{ab}$ satisfies the mountain pass geometry structure. Moreover, similar to \eqref{bddlow}, we can show the (PS) sequence is bounded in $H_\varepsilon$. Secondly, with the help of technique of Brezis and Nirenberg \cite{{Bre}}, we can obtain the estimation of the mountain pass critical level under our conditions for \eqref{ap}, which can recover the compactness of the (PS) sequence. Finally, it follows from the Lions' concentration-compactness principle   \cite{PL1,PL2} and some standard arguments that \eqref{ap} has a positive ground state solution. For the details of the proof, we refer readers to \cite{hezou,HLR2}.
\end{proof}
The following lemma describes a comparison between the Mountain-Pass values for different parameters, which will play a very important role in obtaining the existence results. 
\begin{lemma}\label{L277}
	For $a,\tilde{a} >0$, $b_{i_0}, \tilde{b}_{i_0}>0$, $(j-i_0)b_j>0$, $(j-i_0)\tilde{b}_j>0$$(j=1,2,\cdots,m+1)$. If
	$$min\{a-\tilde{a}, \tilde{b}_j-b_j\}\geq0,$$
	then $c_{ab}\geq c_{\tilde{a}\tilde{b}}$. In particular, if $max\{a-\tilde{a}, \tilde{b}_j-b_j\}>0$ also holds, then $c_{ab} > c_{\tilde{a}\tilde{b}}$.
\end{lemma}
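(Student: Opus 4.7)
The strategy is to combine the minimax characterization from Lemma \ref{L25} (applied to the autonomous problem, as recorded in \eqref{217}) with a pointwise inequality between the two energy functionals. A direct calculation gives
\[
I_{ab}(v) - I_{\tilde a\tilde b}(v) = \frac{a-\tilde a}{2}\int_{\R^3} v^2\,dx + \sum_{i=1}^m \frac{\tilde b_i - b_i}{q_i}\int_{\R^3} |v|^{q_i}\,dx + \frac{\tilde b_{m+1}-b_{m+1}}{6}\int_{\R^3} |v|^6\,dx,
\]
and under the hypothesis $\min\{a-\tilde a,\ \tilde b_j - b_j\}\geq 0$ every term on the right is non-negative, since all of $\int v^2$, $\int|v|^{q_i}$, $\int|v|^6$ are non-negative (note that the signs of $b_j$ themselves are irrelevant for this comparison). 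Consequently $I_{ab}(v)\geq I_{\tilde a\tilde b}(v)$ for every $v\in H_\varepsilon$.

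For the non-strict inequality, fix any $u\in H_\varepsilon\setminus\{0\}$. By Lemma \ref{L23}(i) applied to the functional $I_{\tilde a\tilde b}$, there is a unique $t'=t_{\tilde a\tilde b}(u)>0$ with $t'u\in\mathcal{N}_{\tilde a\tilde b}$ and $I_{\tilde a\tilde b}(t'u)=\max_{t\geq 0}I_{\tilde a\tilde b}(tu)\geq c_{\tilde a\tilde b}$. Combining this with the pointwise bound yields
\[
\max_{t\geq 0} I_{ab}(tu)\ \geq\ I_{ab}(t'u)\ \geq\ I_{\tilde a\tilde b}(t'u)\ \geq\ c_{\tilde a\tilde b}.
\]
Taking the infimum over $u\in H_\varepsilon\setminus\{0\}$ and invoking $c_{ab}=\inf_{u\in H_\varepsilon\setminus\{0\}}\max_{t\geq 0}I_{ab}(tu)$ from \eqref{217} gives $c_{ab}\geq c_{\tilde a\tilde b}$, as desired.

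For the strict version, I would use Lemma \ref{L27} to obtain a positive ground state $u_{ab}\in\mathcal{N}_{ab}$ realizing $I_{ab}(u_{ab})=c_{ab}$, and let $t'>0$ be such that $t'u_{ab}\in\mathcal{N}_{\tilde a\tilde b}$. Since $u_{ab}>0$ we have $t'u_{ab}\neq 0$, so each of $\int (t'u_{ab})^2$, $\int |t'u_{ab}|^{q_i}$, $\int |t'u_{ab}|^6$ is strictly positive. If $\max\{a-\tilde a,\ \tilde b_j-b_j\}>0$, then at least one term in the formula displayed above is strictly positive at $v=t'u_{ab}$, so $I_{ab}(t'u_{ab})>I_{\tilde a\tilde b}(t'u_{ab})$. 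The chain becomes
\[
c_{ab}=I_{ab}(u_{ab})=\max_{t\geq 0}I_{ab}(tu_{ab})\ \geq\ I_{ab}(t'u_{ab})\ >\ I_{\tilde a\tilde b}(t'u_{ab})\ \geq\ c_{\tilde a\tilde b},
\]
which yields the strict inequality $c_{ab}>c_{\tilde a\tilde b}$. The only subtle point is that the strict version genuinely needs Lemma \ref{L27}: an arbitrary minimizing sequence on $\mathcal{N}_{ab}$ could in principle lose mass and force the strictness to degenerate, so a concrete ground state is required to guarantee that the competing integrals are simultaneously bounded away from zero.
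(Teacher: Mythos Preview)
Your proof is correct and follows essentially the same approach as the paper: compare $I_{ab}$ and $I_{\tilde a\tilde b}$ pointwise via the displayed identity, then use the Nehari/minimax characterization \eqref{217} together with the projection $t'u\in\mathcal{N}_{\tilde a\tilde b}$, invoking the ground state from Lemma~\ref{L27} for the strict inequality. The only minor difference is that for the non-strict part you avoid Lemma~\ref{L27} by working with an arbitrary $u\in H_\varepsilon\setminus\{0\}$ rather than with $u_{ab}$, which is a small but clean improvement.
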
	
\begin{proof}
From Lemma \ref{L27}, we choose $u_{ab}$ be a solution of problem \eqref{ap} such that $c_{ab}=I_{ab}(u_{ab})$. There holds 
$$c_{ab}=\max\limits_{t\geq 0}I_{ab}(tu_{ab}).$$
By the similar arguments to Lemma \ref{L23} (i), there exists $t_1>0$ such that $t_1u_{ab}\in \mathcal{N}_{\tilde{a}\tilde{b}}$. let $\tilde{u}_{\tilde{a}\tilde{b}}=t_1u_{ab}$ be such that $I_{\tilde{a}\tilde{b}}(\tilde{u}_{\tilde{a}\tilde{b}})=\max\limits_{t\geq 0}I_{\tilde{a}\tilde{b}}(tu_{ab})$. Then, we see that
$$\begin{aligned}
c_{ab}=I_{ab}(u_{ab})\geq& I_{ab}(\tilde{u}_{\tilde{a}\tilde{b}})\\
=&I_{\tilde{a}\tilde{b}}(\tilde{u}_{\tilde{a}\tilde{b}})+\frac{a-\tilde{a}}{2}||u||_{\mathcal{D}^{1,2}}\\
 &- \sum\limits_{i=1}^{m} \frac{b_i-\tilde{b}_i}{q_i}\int_{ \R^3}|u|^{q_i}dx-\frac{b_{m+1}-\tilde{b}_{m+1}}{6}\int_{ \R^3}|u|^{6}dx\\
 \geq &c_{\tilde{a}\tilde{b}}.
\end{aligned}$$
In particular, if $max\{a-\tilde{a}, \tilde{b}_j-b_j\}>0$ also holds, then the above inequality implies that  $c_{ab} > c_{\tilde{a}\tilde{b}}$. The proof is complete.
\end{proof}
 \vskip2mm
 {\section{The existence of ground state solution }\label{scr}\label{sec3}}
\setcounter{equation}{0}
\vskip2mm
In this section, we will establish compactness lemma for $I_\varepsilon$, and prove the existence of ground state solution to \eqref{22}. Firstly, we need to give the energy functional  associated with \eqref{limpro} by
\begin{equation*}
I_\infty(u)=\frac{1}{2}||u||^2_{\mathcal{D}^{1,2}}+\frac{1}{2}\int_{ \R^3}V_{\infty}u^2 dx +\frac{1}{4}\int_{ \R^3}\phi_{u}u^2 dx - \sum\limits_{i=1}^{m} \frac{1}{q_i}\int_{ \R^3}Q_i^{\infty}|u|^{q_i}dx-\frac{1}{6}\int_{ \R^3} K^{\infty}|u|^{6}dx.
\end{equation*}
Considering the following minimization problem:
\begin{equation*}
 c_{\infty}:=\inf\limits_{u\in \mathcal{N}_\infty} I_{\infty}(u),
\end{equation*}
where $\mathcal{N}_\infty= \{u\in H_\varepsilon \backslash \{0\}:\langle I_{\infty}'(u),u \rangle=0\}.$
Moreover, we denote the energy functional for \eqref{gs} by
\begin{equation}
\begin{aligned}
I^s(u)=&\frac{1}{2}||u||^2_{\mathcal{D}^{1,2}}+\frac{1}{2}\int_{ \R^3}V(s)u^2 dx +\frac{1}{4}\int_{ \R^3}\phi_{u}u^2 dx\\
&- \sum\limits_{i=1}^{m} \frac{1}{q_i}\int_{ \R^3}Q_i(s)|u|^{q_i}dx-\frac{1}{6}\int_{ \R^3} K(s)|u|^{6}dx.
\end{aligned}
\end{equation}
Finally, we 
define the ground energy function 
$$G(s):=\inf\limits_{u\in \mathcal{N}^s} I^s(u),$$
where $\mathcal{N}^s= \{u\in H_\varepsilon \backslash \{0\}:\langle (I^s)'(u),u \rangle=0\}$, which was mentioned in Section \ref{sec1} and will play an important role in estimating the critical energy level.

\begin{lemma}\label{Lc}
For $s\in \R^3$, the ground energy function $G(s)$ is locally Lipschitz continuous.
\end{lemma}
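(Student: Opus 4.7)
The plan is to establish local Lipschitz continuity of $G$ on any bounded neighborhood $B \subset \R^3$ by the standard ``rescaling on the Nehari manifold'' technique. For each $s \in B$, I would take a ground state $u_s \in \mathcal{N}^s$ with $I^s(u_s) = G(s)$; existence follows from the sketch in Lemma \ref{L27} applied to the autonomous problem with constants $V(s), Q_i(s), K(s)$, since $(f_2)$--$(f_4)$ guarantee the required sign structure. For $s, s' \in B$, the analog of Lemma \ref{L23}(i) applied to $I^{s'}$ yields a unique $t = t(s,s') > 0$ such that $t u_s \in \mathcal{N}^{s'}$, and thus $G(s') \le I^{s'}(t u_s)$. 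Decomposing
\begin{equation*}
G(s') \le I^{s'}(t u_s) = I^s(t u_s) + \bigl[I^{s'}(t u_s) - I^s(t u_s)\bigr] \le I^s(u_s) + \bigl[I^{s'}(t u_s) - I^s(t u_s)\bigr] = G(s) + \bigl[I^{s'}(t u_s) - I^s(t u_s)\bigr],
\end{equation*}
where the middle inequality uses the ray-maximum characterization from Lemma \ref{L23}(i), it suffices to bound the bracketed remainder by $L|s-s'|$.

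Next, I would establish uniform bounds on $B$. Fix any nonzero $w \in H^1(\R^3)$ and rescale to $\mathcal{N}^s$ via $t(s) w \in \mathcal{N}^s$; by the analog of Lemma \ref{L23}(ii), the scaling factor $t(s)$ is uniformly controlled as $s$ ranges over $B$, because $V(s), Q_i(s), K(s)$ are all uniformly bounded by $(f_3)$--$(f_4)$. This gives $G(s) \le \sup_{s \in B} I^s(t(s) w) < \infty$ uniformly, and coupling this with the Nehari identity used in \eqref{bddlow} produces a uniform upper bound $\|u_s\|_{H^1} \le M$. The lower bound $\|u_s\|_{H^1} \ge \kappa > 0$ follows from the Sobolev chain in Lemma \ref{L23}(iv) applied parameter-wise. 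These two-sided bounds on $\|u_s\|$, together with the Nehari equation $\langle (I^{s'})'(t u_s), t u_s\rangle = 0$, in turn yield uniform two-sided bounds $0 < \alpha_1 \le t(s,s') \le \alpha_2$ independent of $s, s' \in B$.

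Using the $C^1$ regularity in $(f_1)$, I would then write the remainder explicitly:
\begin{equation*}
I^{s'}(t u_s) - I^s(t u_s) = \frac{t^2}{2}(V(s') - V(s)) \int_{\R^3} u_s^2 - \sum_{i=1}^{m} \frac{t^{q_i}}{q_i}(Q_i(s') - Q_i(s)) \int_{\R^3} |u_s|^{q_i} - \frac{t^6}{6}(K(s') - K(s)) \int_{\R^3} |u_s|^6.
\end{equation*}
Since $V, Q_i, K$ are Lipschitz on $B$ (being $C^1$ on $\R^3$) and the integrals $\int |u_s|^p$ for $p \in \{2, q_i, 6\}$ are uniformly bounded via Sobolev embedding \eqref{spine} and $\|u_s\|_{H^1} \le M$, the right-hand side is dominated by $L|s-s'|$ for some $L = L(B)$. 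Thus $G(s') \le G(s) + L|s-s'|$, and swapping the roles of $s$ and $s'$ yields $|G(s) - G(s')| \le L|s-s'|$.

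The main obstacle I anticipate is verifying that the infimum $G(s)$ is actually attained for every $s \in B$, since $(f_2)$ allows $Q_{i_0}(s)$ to change sign and only gives weak inequalities $(i-i_0)Q_i(s) \ge 0$ for $i \ne i_0$, so the mountain-pass / Brezis--Nirenberg level estimate underlying Lemma \ref{L27} may degenerate at isolated $s$. I would handle this by replacing $u_s$ with an approximate minimizing sequence $\{u_s^{(n)}\} \subset \mathcal{N}^s$ satisfying $I^s(u_s^{(n)}) \to G(s)$; the uniform bounds on $\|u_s^{(n)}\|_{H^1}$ remain valid by the same Nehari computation, the comparison $I^s(t_n u_s^{(n)}) \le I^s(u_s^{(n)})$ still holds, and passing $n \to \infty$ in the inequality $G(s') \le I^{s'}(t_n u_s^{(n)}) \le I^s(u_s^{(n)}) + L|s-s'|$ recovers the same Lipschitz estimate without requiring attainment.
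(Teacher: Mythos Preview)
Your proposal is correct and follows essentially the same route as the paper, which omits the proof and refers to \cite[Lemma 2.3, Lemma 2.4]{XB}; the argument there is precisely the Nehari-rescaling comparison you outline, including the uniform two-sided bounds on $\|u_s\|$ and the scaling factor $t(s,s')$, followed by the $C^1$-Lipschitz control of the coefficient functions. Your contingency with approximate minimizers to avoid assuming attainment is a sound refinement and does not depart from the method.
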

The proof of Lemma \ref{Lc} is essentially similar to that in \cite[Lemma 2.3, Lemma 2.4 ]{XB}, so we omit it here.
\begin{lemma}\label{vnsh}(\cite[Lemma 1.21]{Wi})
	Let $r>0$ and $\{u_n\}$ is bounded in $H^{1}(\R^N)$. If
	$$\sup\limits_{y\in\R^N}\int_{B_r (y)}|u_n|^2\to 0,\quad n\to \infty,$$
	then $u_n \to 0$ in $L^s(\R^N) $ for $2<s<2^*$.
\end{lemma}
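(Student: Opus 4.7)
The plan is to combine a local $L^p$-interpolation with the Sobolev embedding on balls, and then sum the resulting local estimates over a covering of $\R^N$ of bounded overlap. Fix $s \in (2,2^*)$ and pick an auxiliary exponent $q \in (s,2^*]$ to be chosen later. For each $y \in \R^N$, the standard interpolation between $L^2$ and $L^q$ gives
\begin{equation*}
|u|_{s,B_r(y)}^{s} \le |u|_{2,B_r(y)}^{s(1-\theta)}\,|u|_{q,B_r(y)}^{s\theta}, \qquad \frac{1}{s} = \frac{1-\theta}{2} + \frac{\theta}{q},
\end{equation*}
and the Sobolev embedding $H^{1}(B_r(y)) \hookrightarrow L^{q}(B_r(y))$ (valid because $q \le 2^{*}$) controls $|u|_{q,B_r(y)} \le C\,||u||_{H^{1}(B_r(y))}$ for a constant $C = C(r,q,N)$.

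The key is to choose $q$ so that $s\theta \ge 2$. A short computation shows this is feasible throughout $s \in (2,2^{*})$: when $s$ is close to $2^{*}$ one takes $q = 2^{*}$ directly, and when $s$ is close to $2$ one takes $q$ slightly larger than $s$, the admissibility reducing to the strict inequality $(s-2)^{2} > 0$. With such a $q$ in hand, I would factor
\begin{equation*}
||u||_{H^{1}(B_r(y))}^{s\theta} \le ||u||_{H^{1}(\R^{N})}^{s\theta-2}\,||u||_{H^{1}(B_r(y))}^{2},
\end{equation*}
absorb the first factor into a constant via the uniform $H^{1}$-bound on $\{u_n\}$, and arrive at the pointwise-in-$y$ local estimate
\begin{equation*}
|u_n|_{s,B_r(y)}^{s} \le C'\,\varepsilon_n^{s(1-\theta)/2}\,||u_n||_{H^{1}(B_r(y))}^{2}, \qquad \varepsilon_n := \sup_{y\in\R^{N}}\int_{B_r(y)}|u_n|^{2}.
\end{equation*}

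Next, I would build a countable covering $\R^{N} = \bigcup_i B_r(y_i)$ with a uniform overlap bound $M = M(N)$ (a routine Vitali-type construction), sum the previous estimate over $i$, and invoke $\sum_i ||u_n||_{H^{1}(B_r(y_i))}^{2} \le M\,||u_n||_{H^{1}(\R^{N})}^{2}$ to conclude
\begin{equation*}
|u_n|_{s}^{s} \le C'M\,\varepsilon_n^{s(1-\theta)/2}\,||u_n||_{H^{1}(\R^{N})}^{2}.
\end{equation*}
Since the sequence is bounded in $H^{1}(\R^{N})$ and $\varepsilon_n \to 0$ by hypothesis, the right-hand side tends to zero, which yields $u_n \to 0$ in $L^{s}(\R^{N})$.

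The main obstacle is the exponent balancing: the covering sum closes only when the power $s\theta$ on the local Sobolev factor is at least $2$, and for $s$ close to $2$ this forces $q$ to be taken very close to $s$ itself. The only genuine check is that such a $q$ still lies in $(s,2^{*}]$, which, as noted, reduces to $(s-2)^{2} > 0$ and therefore holds throughout the entire admissible range.
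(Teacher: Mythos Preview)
Your argument is correct and is essentially the classical Lions concentration--compactness proof that Willem records as Lemma~1.21. The paper itself does not supply a proof of this lemma at all; it merely cites \cite[Lemma 1.21]{Wi} and uses the statement as a black box. So there is nothing in the paper to compare against beyond the reference, and your write-up in fact reproduces the standard argument behind that reference.

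One small remark on presentation: in Willem's version one typically fixes $q=2^{*}$, proves the result for a single convenient exponent (where the summability closes automatically), and then recovers the full range $s\in(2,2^{*})$ by global interpolation between $L^{2}$, $L^{s_{0}}$, and $L^{2^{*}}$. Your choice to vary $q=q(s)$ so that $s\theta\geq 2$ holds directly for every $s$ is equally valid and avoids that extra interpolation step; the exponent check $(s-2)^{2}>0$ you isolate is exactly the right condition.
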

To estimate the critical energy level for the critical Schr\"{o}dinger--Poisson system involving more than four potentials,  we have an important upper bound for the least energy $c_\varepsilon$ defined in \eqref{lstene} via the definition of the ground energy function $G(s)$.
\begin{lemma}\label{cric1}
There exists $C>0$ independent of $\varepsilon$, such that $c_\varepsilon\geq C$. Furthermore, 
\begin{equation}\label{32}
\limsup\limits_{\varepsilon\to 0}c_\varepsilon\le c_0:=\inf\limits_{s\in \R^3}G(s).
\end{equation}
\end{lemma}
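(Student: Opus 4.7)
The uniform lower bound $c_\varepsilon \geq C$ is immediate: every $u \in \mathcal{N}_\varepsilon$ satisfies $I_\varepsilon(u) \geq \frac{q_{i_0}-2}{2 q_{i_0}} \kappa^2$ by Lemma \ref{L23}(iv), with $\kappa>0$ independent of $\varepsilon$. The substantive content is therefore the upper bound \eqref{32}, which I plan to prove by an explicit test-function construction based on a translated ground state of the frozen-coefficient problem \eqref{gs}.

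Fix $s \in \R^3$ at which a ground state $w_s \in \mathcal{N}^s$ of \eqref{gs} exists, so that $I^s(w_s)=G(s)$; such $s$ are supplied by Lemma \ref{L27} applied with $a=V(s)$, $b_i=Q_i(s)$, $b_{m+1}=K(s)$ under the appropriate sign conditions. Set $v_\varepsilon(x):=w_s(x-s/\varepsilon)$. After the substitution $y=x-s/\varepsilon$, each weighted integral in $I_\varepsilon(v_\varepsilon)$ takes the form $\int_{\R^3} W(\varepsilon y + s)\, |w_s(y)|^r\, dy$ for $W \in \{V, Q_i, K\}$ and the appropriate exponent $r$; by continuity of $V,Q_i,K$ at $s$, their boundedness, and dominated convergence, these integrals tend to $V(s)|w_s|_2^2$, $Q_i(s)|w_s|_{q_i}^{q_i}$, and $K(s)|w_s|_6^6$ respectively as $\varepsilon \to 0$. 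The Dirichlet integral and the nonlocal term $\int \phi_u u^2$ are exactly invariant under translation (for the nonlocal piece, substitute twice in the double-integral representation of $\int \phi_{v_\varepsilon} v_\varepsilon^2$), so $I_\varepsilon(v_\varepsilon) \to I^s(w_s)=G(s)$.

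By Lemma \ref{L23}(i) there is a unique $t_\varepsilon>0$ with $t_\varepsilon v_\varepsilon \in \mathcal{N}_\varepsilon$, giving $c_\varepsilon \leq I_\varepsilon(t_\varepsilon v_\varepsilon)$. The key remaining step is to show $t_\varepsilon \to 1$: the squared norm $||v_\varepsilon||_\varepsilon^2$ is pinched between $|\nabla w_s|_2^2 + V_0 |w_s|_2^2$ and $|\nabla w_s|_2^2 + (\sup V)|w_s|_2^2$ uniformly in $\varepsilon$, so Lemma \ref{L23}(ii) delivers uniform bounds $\alpha_1 \le t_\varepsilon \le \alpha_2$; the Nehari identity for $t_\varepsilon v_\varepsilon$, combined with the limits above, yields a polynomial equation in $t_\varepsilon$ whose coefficients converge to those of the Nehari identity satisfied by $w_s \in \mathcal{N}^s$ at $t=1$, and uniqueness of the positive root (again Lemma \ref{L23}(i)) forces $t_\varepsilon \to 1$. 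Passing to the limit in $I_\varepsilon(t_\varepsilon v_\varepsilon)$ yields $\limsup_{\varepsilon\to 0} c_\varepsilon \le G(s)$, and taking the infimum over admissible $s$ gives \eqref{32}. The main obstacle is the simultaneous control of $t_\varepsilon$ and of the limits of the potential-weighted integrals; once the translation trick and the uniqueness from Lemma \ref{L23}(i) are in place, the remainder is essentially routine.
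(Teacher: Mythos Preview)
Your proof is correct and takes essentially the same approach as the paper: translate a ground state of the frozen-coefficient problem to center $s/\varepsilon$, project onto $\mathcal{N}_\varepsilon$, show $t_\varepsilon\to 1$ via the Nehari identity together with dominated convergence of the potential-weighted integrals, and pass to the limit. The paper differs only in that it first uses the hypothesis $c_\infty>c_0$ and the continuity of $G$ (Lemma~\ref{Lc}) to fix a single point $s_0$ with $G(s_0)=c_0$, and works there, rather than proving the bound for each $s$ and taking the infimum at the end.
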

\begin{proof}
Taking $a=\inf\limits_{x\in\R^{3}}V(x)$, $b_i=\sup\limits_{x\in\R^{3}}Q_i(x)(1\le i\le m)$, and  $b_m+1=\sup\limits_{x\in\R^{3}}K(x)$ in \eqref{217} and using Lemma \ref{L277}, we can obtain that $c_\varepsilon\geq c_{ab}>0$. We only need to show \eqref{32}.
It follows from $c_0<c_\infty$ and Lemma \ref{Lc} that there exists $s_0\in\R^3$ such that $G(s_0)=c_0$. Furthermore, by Lemma \ref{L27}, there exists $u_{s_0}\in \mathcal{N}^{s_0}$, i.e.,
\begin{equation}\label{tto1}
\begin{aligned}
&||u_{s_0}||^2_{\mathcal{D}^{1,2}}+ 
\int_{ \R^3} V(s_0)|u_{s_0}|^{2}dx+\int_{ \R^3}\phi_{u_{s_0}}u_{s_0}^2 dx\\
 = &\sum\limits_{i=1}^{m} \int_{ \R^3}Q_i (s_0)|u_{s_0}|^{q_i}dx + 
\int_{ \R^3} K(s_0)|u_{s_0}|^{6}dx,
\end{aligned}
\end{equation}
such that $I^{s_0}(u_{s_0})=c_0$. Set $\omega_\varepsilon(x)=u_{s_0}\left(x-\frac{s_0}{\varepsilon}\right)$,  from Lemma \ref{L23}(i) and (ii), we know that there exists a unique bounded $t_\varepsilon>0$ such that $t_\varepsilon\omega_\varepsilon\in\mathcal{N}_\varepsilon$, i.e.,
$$t_\varepsilon||\omega_\varepsilon||^2_\varepsilon +t_\varepsilon^3\int_{ \R^3}\phi_{\omega_\varepsilon}\omega_\varepsilon^2 dx = \sum\limits_{i=1}^{m}t_\varepsilon^{q_i-1}  \int_{ \R^3}Q_i (\varepsilon x)|\omega_\varepsilon|^{q_i}dx + t_\varepsilon^6
\int_{ \R^3} K(\varepsilon x)|\omega_\varepsilon|^{6}dx,$$
and so
\begin{equation}
\begin{aligned}
&t_\varepsilon||u_{s_0}||^2_{\mathcal{D}^{1,2}} +t_\varepsilon\int_{ \R^3}V(\varepsilon x+s_0)|u_{s_0}|^{2}dx+t_\varepsilon^3\int_{ \R^3}\phi_{u_{s_0}}u_{s_0}^2 dx \\
= &\sum\limits_{i=1}^{m} t_\varepsilon^{q_i-1}\int_{ \R^3} Q_i (\varepsilon x+s_0)|u_{s_0}|^{q_i}dx + t_\varepsilon^6
\int_{ \R^3} K(\varepsilon x+s_0)|u_{s_0}|^{6}dx.
\end{aligned}
\end{equation}
\par
Since $V(x)$ is bounded, then there exists a constant $M$ such that $|V(\varepsilon x+s_0)-V(s_0)|\le M$. From \eqref{spine}, we have $u_{s_0}\in L^p(\R^3)(2\le p\le6)$, then for any $\sigma>0$, there exists $R>0$ such that $|u_{s_0}|_{2,B_R^C(0)}<\frac{\sigma}{2M}$. Thus, we have
\begin{equation*}
 \int_{\R^3\backslash B_{R}(0)}(V(\varepsilon x+s_0)-V(s_0)|u_{s_0}|^2dx<\frac{\sigma}{2},
\end{equation*}
and obviously we observe that
\begin{equation*}
\int_{B_{R}(0)}(V(\varepsilon x+s_0)-V(s_0)|u_{s_0}|^2dx<\frac{\sigma}{2},
\end{equation*}
as $\varepsilon\to 0$.
Which implies that
 \begin{equation}
 \int_{ \R^3}V(\varepsilon x+s_0)|u_{s_0}|^2dx\to\int_{ \R^3}V(s_0)|u_{s_0}|^2dx,
 \end{equation}
 as $\varepsilon\to 0$. Similarly, we can obtain that
  \begin{equation}
 \int_{ \R^3}K(\varepsilon x+s_0)|u_{s_0}|^6dx\to\int_{ \R^3}K(s_0)|u_{s_0}|^6dx,
 \end{equation}
 and
  \begin{equation}\label{to1}
 \int_{ \R^3}Q_i(\varepsilon x+s_0)|u_{s_0}|^2dx\to\int_{ \R^3}Q_i(s_0)|u_{s_0})|^2dx,(1\le i \le m),
 \end{equation}
  as $\varepsilon\to 0$. From \eqref{tto1}--\eqref{to1}, we have $t_\varepsilon\to 1$, as $\varepsilon\to 0$.
Now observe that
  \begin{equation}
 \begin{aligned}
 c_\varepsilon=&\inf\limits_{u\in \mathcal{N}_\varepsilon}I_\varepsilon(u)\\
 \le& I_\varepsilon(t_\varepsilon \omega_\varepsilon)\\
 =&I^{s_0}(t_\varepsilon \omega_\varepsilon)+\frac{t_\varepsilon^2}{2}\int_{\R^3}(V(\varepsilon x)-V(s_0))| \omega_\varepsilon|^2dx\\
 &-\sum\limits_{i=1}^{m}\frac{t_\varepsilon^{q_i}}{q_i} \int_{ \R^3} (Q_i (\varepsilon x)-Q_i(s_0))|\omega_\varepsilon|^{q_i}dx -\frac{t_\varepsilon^6}{6} 
 \int_{ \R^3} (K(\varepsilon x)-K(s_0)|\omega_\varepsilon|^{6}dx\\
 =&I^{s_0}(t_\varepsilon u_{s_0})+\frac{t_\varepsilon^2}{2}\int_{\R^3}(V(\varepsilon x+s_0)-V(s_0))| u_{s_0}|^2dx\\
 &-\sum\limits_{i=1}^{m}\frac{t_\varepsilon^{q_i}}{q_i} \int_{ \R^3} (Q_i (\varepsilon x+s_0)-Q_i(s_0))|u_{s_0}|^{q_i}dx\\ &-\frac{t_\varepsilon^6}{6} 
 \int_{ \R^3} (K(\varepsilon x+s_0)-K(s_0)|u_{s_0}|^{6}dx.
 \end{aligned}
  \end{equation}
The property of $t_\varepsilon$ discussed above implies that  $I^{s_0}(t_\varepsilon u_{s_0})\to I^{s_0}( u_{s_0})=c_0$ as $\varepsilon\to 0$. Now the desired conclusion is obtained. 
\end{proof}
For any $\sigma>0$, we consider the function $u_{\sigma,x_0}$ defined by 
$$u_{\sigma,x_0}(x)=\frac{(3\sigma^2)^\frac{1}{4}}{(\sigma^2+|x-x_0|^2)^{\frac{1}{2}}},$$
which is a solution of the critical problem $-\Delta u=u^5$ in $\R^3$(see \cite{Wi,ms}), and $x_0$ is given in condition $(f_4)$. Let $U_{\sigma,x_0}=\xi(x-x_0)u_{\sigma,x_0}$, where $\xi(x)\in C^{\infty}_0(B_{2R}(0),[0,1])$ satisfies $\xi(x)\equiv 1$ on $B_R(0)$, where $R$ is a positive constant. As in \cite{Bre,ms}, the following estimations hold: 
\begin{equation}
\int_{ \R^3} |\nabla U_{\sigma,x_0}|^2dx=S^{\frac{3}{2}}+O(\sigma)\hspace{1ex}\text{and} \hspace{1ex}\int_{ \R^3} |U_{\sigma,x_0}|^6dx=S^{\frac{3}{2}}+O(\sigma^3),
\end{equation}
and for any $t\in[2,6)$,
\begin{equation}\label{cutest}
|U_{\sigma,x_0}|^t_t=\begin{cases}
O(\sigma^{\frac{t}{2}}),& t\in[2,3),\\
O(\sigma^{\frac{3}{2}}|ln\sigma|),& t=3, \\
O(\sigma^{\frac{6-t}{2}}),&t\in(3,6).
\end{cases}
\end{equation}
\par
In the following lemma, we compare the minimum level of ground energy function with a suitable number which involves the best Sobolev embedding constant $S$.
\begin{lemma}\label{cric2}
	 $c_{0}=\inf\limits_{s\in\R^3}G(s)<\frac{1}{3}S^{\frac{3}{2}}|K|_{\infty}^{-\frac{1}{2}}$.	\end{lemma}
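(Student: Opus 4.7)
The plan is to test the ground energy $G(x_0)$ at the point $x_0$ supplied by condition $(f_4)$, where $K(x_0)=K^{\infty}=|K|_{\infty}$, against the cut-off Aubin--Talenti profile $U_{\sigma,x_0}$ introduced just above the statement. Since $c_0=\inf_{s\in\R^3} G(s)\le G(x_0)$, and the Nehari characterization of Lemma \ref{L23}(i), applied to the autonomous functional $I^{x_0}$, yields $G(x_0)\le \sup_{t\ge 0} I^{x_0}(tU_{\sigma,x_0})$ for every $\sigma>0$, the lemma reduces to producing one such $\sigma$ for which this latter supremum is strictly less than $\tfrac{1}{3}S^{3/2}|K|_{\infty}^{-1/2}$.

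I would then expand $I^{x_0}(tU_{\sigma,x_0})$ term by term using the asymptotics recorded just before the statement. The dominant pair $\tfrac{t^2}{2}||U_{\sigma,x_0}||_{\mathcal{D}^{1,2}}^{2}-\tfrac{t^6}{6}K(x_0)|U_{\sigma,x_0}|_6^6$ gives, via the elementary identity $\max_{t\ge 0}[\tfrac{t^2}{2}A-\tfrac{t^6}{6}B]=\tfrac{1}{3}A^{3/2}B^{-1/2}$, the leading value $\tfrac{1}{3}S^{3/2}|K|_{\infty}^{-1/2}+O(\sigma)$. The mass term $\tfrac{t^2}{2}V(x_0)|U_{\sigma,x_0}|_2^2$ is also $O(\sigma)$ by \eqref{cutest}, hence absorbed into the remainder. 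For the Schr\"odinger--Poisson piece $\tfrac{t^4}{4}\int\phi_{U_{\sigma,x_0}}U_{\sigma,x_0}^{2}\,dx$, I would invoke Lemma \ref{L21}(iv) together with $|U_{\sigma,x_0}|_{12/5}^{2}=O(\sigma)$ from \eqref{cutest} to obtain that it is of order $O(\sigma^{2})$, thus of higher order than any of the remaining corrections. Finally, the subcritical integrals satisfy $|U_{\sigma,x_0}|_{q_i}^{q_i}=O(\sigma^{(6-q_i)/2})$ with $(6-q_i)/2<1$ because every $q_i>4$.

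The key step is to show that, at the optimizing $t=t_\sigma^{\ast}$, the negative subcritical contribution $-\sum_i\tfrac{t^{q_i}}{q_i}Q_i(x_0)|U_{\sigma,x_0}|_{q_i}^{q_i}$ strictly dominates the remaining $O(\sigma)$ positive error. Under $(f_2)$ one has $Q_i(x_0)\ge 0$ for $i>i_0$, so assuming at least one such $i$ with $Q_i(x_0)>0$---which can be arranged either directly, or through the local Lipschitz continuity of $G$ from Lemma \ref{Lc} and a small perturbation of the center within the level set $\{K=K^{\infty}\}$---the negative term is of order $\sigma^{(6-q_i)/2}$ and beats $O(\sigma)$ for $\sigma$ sufficiently small. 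The main obstacle relative to the classical Brezis--Nirenberg setting is precisely the nonlocal Coulomb term, which Lemma \ref{L21}(iv) combined with the $L^{12/5}$ estimate from \eqref{cutest} harmlessly pushes to order $O(\sigma^{2})$; a secondary delicate point is tracking the competing signs of the $Q_i$'s so that the dominant subcritical correction has the right sign. Assembling all the estimates and choosing $\sigma>0$ small yields $c_0\le G(x_0)<\tfrac{1}{3}S^{3/2}|K|_{\infty}^{-1/2}$.
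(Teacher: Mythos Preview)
Your approach is correct and essentially identical to the paper's: both test $G(x_0)$ against the cut-off Aubin--Talenti profile $U_{\sigma,x_0}$, estimate the nonlocal term as $O(\sigma^{2})$ via Lemma~\ref{L21}(iv) and \eqref{cutest}, and conclude because the negative subcritical contribution of order $\sigma^{(6-q_m)/2}$ (with $(6-q_m)/2<1$) dominates the $O(\sigma)$ remainder. The only minor differences are that the paper explicitly invokes Lemma~\ref{L23}(ii) to keep $t_\sigma$ bounded and writes the final estimate with the split $\sum_{i\le i_0}-\sum_{i>i_0}$, while your perturbation idea for arranging $Q_i(x_0)>0$ is absent from the paper (which simply takes the sign structure from $(f_2)$ at face value) and would not be reliable in general since the level set $\{K=K^{\infty}\}$ may reduce to a single point.
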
	
\begin{proof}
Using the condition $(f_4)$, there exists $x_0\in\R^3$ such that $K(x_0)=K^\infty$. We consider the following equation 
$$-\Delta u+\phi_u u+V(x_0)u=\sum\limits_{i=1}^{m} Q_i(x_0)|u|^{q_i-2}u+K(x_0)|u|^{4}u,\quad x\in\R^3, $$
with the corresponding energy functional 
\begin{equation}
\begin{aligned}
I^{x_0}(u)=&\frac{1}{2}||u||^2_{\mathcal{D}^{1,2}}+\frac{1}{2}\int_{ \R^3}V(x_0)u^2 dx +\frac{1}{4}\int_{ \R^3}\phi_{u}u^2 dx\\
&- \sum\limits_{i=1}^{m} \frac{1}{q_i}\int_{ \R^3}Q_i(x_0)|u|^{q_i}dx-\frac{1}{6}\int_{ \R^3} K(x_0)|u|^{6}dx.
\end{aligned}
\end{equation}
Moreover, by Lemma \ref{L27}, we can obtain that there exists $u_{x_0}\in \mathcal{N}^{x_0}$ such that $G(x_0)=I^{x_0}(u_{x_0})$. Taking account of Lemma \ref{L23} (i), (ii) and \eqref{3eq}, we have $G(x_0)\le\max\limits_{t\geq 0}I^{x_0}(tU_{\sigma,x_0}),$ and there exists a unique bounded $t_\sigma>0$ such that $I^{x_0}(t_\sigma U_{\sigma,x_0})=\max\limits_{t\geq 0}I^{x_0}(tU_{\sigma,x_0})$.
By the Lemma \ref{L21} and \eqref{cutest}, we can obtain that 
\begin{equation}
\begin{aligned}
I^{x_0}(t_\sigma U_{\sigma,x_0})=&\frac{t_\sigma^2}{2}||U_{\sigma,x_0}||^2_{\mathcal{D}^{1,2}}+\frac{t_\sigma^2}{2}\int_{ \R^3}V(x_0)U_{\sigma,x_0}^2 dx+\frac{t_\sigma^4}{4}\int_{ \R^3}\phi_{ _{U_{\sigma,x_0}}}U_{\sigma,x_0}^2 dx\\ 
&- \sum\limits_{i=1}^{m} \frac{t_\sigma^{q_i}}{q_i}\int_{ \R^3}Q_i(x_0)|U_{\sigma,x_0}|^{q_i}dx-\frac{t_\sigma^6}{6}\int_{ \R^3} K(x_0)|U_{\sigma,x_0}|^{6}dx\\
\le&\frac{t_\sigma^2}{2}\int_{ \R^3}\left(|\nabla U_{\sigma,x_0}|^2+V(x_0)U_{\sigma,x_0}^2\right) dx+\frac{t_\sigma^4}{4S}\left(\int_{ \R^3}|U_{\sigma,x_0}|^{\frac{12}{5}} dx\right)^{\frac{5}{3}}\\ 
&- \sum\limits_{i=1}^{m} \frac{t_\sigma^{q_i}}{q_i}\int_{ \R^3}Q_i(x_0)|U_{\sigma,x_0}|^{q_i}dx-\frac{t_\sigma^6}{6}\int_{ \R^3} K(x_0)|U_{\sigma,x_0}|^{6}dx\\
\le&\sup\limits_{t_\sigma>0}\left\{\frac{t_\sigma^2}{2}\int_{ \R^3}|\nabla U_{\sigma,x_0}|^2dx -\frac{t_\sigma^6}{6}\int_{ \R^3} K(x_0)|U_{\sigma,x_0}|^{6}dx\right\}\\
&- \sum\limits_{i=1}^{m} \frac{t_\sigma^{q_i}}{q_i}\int_{ \R^3}Q_i(x_0)|U_{\sigma,x_0}|^{q_i}dx+CO(\sigma)+CS^{-1}O(\sigma^2).
	\end{aligned}
	\end{equation}
Using the condition $(f_2)$ and $4<q_1<q_2<\cdots<q_m<6$, one has
\begin{equation}
\begin{aligned} I^{x_0}(t_\sigma U_{\sigma,x_0})\le&\frac{1}{3}S^{\frac{3}{2}}|K|_{\infty}^{-\frac{1}{2}}+O(\sigma)+CO(\sigma)+CS^{-1}O(\sigma^2)\\
&+\sum\limits_{i=1}^{i_0}C_iO(\sigma^\frac{6-q_i}{2})-\sum\limits_{i=i_0+1}^{m}C_iO(\sigma^\frac{6-q_i}{2})\\
<&\frac{1}{3}S^{\frac{3}{2}}|K|_{\infty}^{-\frac{1}{2}},
\end{aligned}
\end{equation}
as $\sigma>0$ small enough. Consequently, we deduce from the definition of $c_0$ that 
$$c_0\le G(x_0)\le I^{x_0}(t_\sigma U_{\sigma,x_0})<\frac{1}{3}S^{\frac{3}{2}}|K|_{\infty}^{-\frac{1}{2}}.$$
The desired conclusion is obtained.
\end{proof}
\begin{remark}
By Lemma \ref{cric1} and Lemma \ref{cric2}, we see that $c_\varepsilon<\frac{1}{3}S^{\frac{3}{2}}|K|_{\infty}^{-\frac{1}{2}}$ for $\varepsilon>0$ small enough. Moreover, it follows from Lemma \ref{cric1} and \eqref{14} that $c_\varepsilon<c_\infty$.
\end{remark}

\begin{lemma}\label{psbdd}
If $\{u_n\}\subset H_\varepsilon$ be a $(PS)_c$ sequence  for $I_\varepsilon$, then $\{u_n\}$ is bounded in $H_\varepsilon$.
\end{lemma}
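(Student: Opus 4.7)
The plan is to mimic the coercivity computation performed in Lemma \ref{L23}(iii) for $u\in\mathcal{N}_\varepsilon$, but to apply it to an arbitrary $(PS)_c$ sequence. The whole purpose of splitting the index $i_0$ out in assumption $(f_2)$ is that combining $I_\varepsilon(u_n)$ with the right multiple of $\langle I_\varepsilon'(u_n),u_n\rangle$ makes every nonlinear remainder term have a favorable sign.

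Concretely, I would start from the identity already written in \eqref{bddlow}, applied to $u_n$:
\begin{equation*}
\begin{aligned}
I_\varepsilon(u_n)-\frac{1}{q_{i_0}}\langle I_\varepsilon'(u_n),u_n\rangle
=&\left(\tfrac{1}{2}-\tfrac{1}{q_{i_0}}\right)\|u_n\|_\varepsilon^2+\left(\tfrac{1}{4}-\tfrac{1}{q_{i_0}}\right)\int_{\R^3}\phi_{u_n}u_n^2\,dx\\
&-\sum_{i=1}^{i_0-1}\left(\tfrac{1}{q_i}-\tfrac{1}{q_{i_0}}\right)\int_{\R^3}Q_i(\varepsilon x)|u_n|^{q_i}\,dx\\
&+\sum_{i=i_0+1}^{m}\left(\tfrac{1}{q_{i_0}}-\tfrac{1}{q_i}\right)\int_{\R^3}Q_i(\varepsilon x)|u_n|^{q_i}\,dx\\
&+\left(\tfrac{1}{q_{i_0}}-\tfrac{1}{6}\right)\int_{\R^3}K(\varepsilon x)|u_n|^{6}\,dx.
\end{aligned}
\end{equation*}
Since $4<q_1<\cdots<q_m<6$, every coefficient of the form $\tfrac{1}{2}-\tfrac{1}{q_{i_0}}$, $\tfrac{1}{4}-\tfrac{1}{q_{i_0}}$, $\tfrac{1}{q_i}-\tfrac{1}{q_{i_0}}$ (for $i<i_0$), $\tfrac{1}{q_{i_0}}-\tfrac{1}{q_i}$ (for $i>i_0$) and $\tfrac{1}{q_{i_0}}-\tfrac{1}{6}$ is strictly positive. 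By $(f_2)$, $Q_i(x)\le 0$ when $i<i_0$ and $Q_i(x)\ge 0$ when $i>i_0$, so both sums contribute non-negative quantities; the Poisson term is non-negative because $\phi_{u_n}\ge 0$; and $K\ge 0$ by $(f_4)$. Consequently the right-hand side is bounded below by $\left(\tfrac{1}{2}-\tfrac{1}{q_{i_0}}\right)\|u_n\|_\varepsilon^2$.

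On the other hand, using $I_\varepsilon(u_n)=c+o(1)$ and $\|I_\varepsilon'(u_n)\|_{H_\varepsilon^*}=o(1)$, the left-hand side is dominated by
\begin{equation*}
c+o(1)+\frac{1}{q_{i_0}}\|I_\varepsilon'(u_n)\|_{H_\varepsilon^*}\|u_n\|_\varepsilon=c+o(1)+o(1)\|u_n\|_\varepsilon.
\end{equation*}
Combining the two estimates yields $\|u_n\|_\varepsilon^2\le C_1+C_2\|u_n\|_\varepsilon$, from which the boundedness of $\{u_n\}$ in $H_\varepsilon$ follows at once, completing the proof.

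There is no genuine obstacle here: the lemma is essentially a sanity check that $(f_2)$, $(f_4)$ together with $q_{i_0}>4$ give enough positivity to absorb the sign-changing potential $Q_{i_0}$ (whose term simply vanishes in the combination $I_\varepsilon-\tfrac{1}{q_{i_0}}\langle I_\varepsilon',\cdot\rangle$). The only thing to be careful about is the bookkeeping of signs in the double-indexed sum; this is exactly the computation already carried out and verified in \eqref{bddlow}, so the present lemma can be written as a direct adaptation of that argument.
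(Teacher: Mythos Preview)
Your proof is correct and follows essentially the same approach as the paper: both compute $I_\varepsilon(u_n)-\tfrac{1}{q_{i_0}}\langle I_\varepsilon'(u_n),u_n\rangle$, use $(f_2)$, $(f_4)$ and $4<q_1<\cdots<q_m<6$ to discard all nonlinear terms with the correct sign, and bound the left-hand side by $c+1+\|u_n\|_\varepsilon$ (equivalently, your $c+o(1)+o(1)\|u_n\|_\varepsilon$) to conclude.
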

\begin{proof}
Let $\{u_n\}\subset H_\varepsilon$ be a $(PS)_c$ sequence  for $I_\varepsilon$, for n large enough, we have 
\begin{equation}
\begin{aligned}
c+1+||u_n||_\varepsilon
\geq & \hspace{0.4em}I_\varepsilon (u_n)-\frac{1}{q_{i_0}}\langle I_{\varepsilon}'(u_n),u_n \rangle\\
=&\left(\frac{1}{2}-\frac{1}{q_{i_0}}\right)||u_n||^2_\varepsilon +\left(\frac{1}{4}-\frac{1}{q_{i_0}}\right)\int_{ \R^3}\phi_{u_n}u_n^2 dx \\
&- \sum\limits_{i=1}^{i_0-1} \left(\frac{1}{q_i}-\frac{1}{q_{i_0}}\right)\int_{ \R^3}Q_i (\varepsilon x)|u_n|^{q_i}dx\\
&+\sum\limits_{i=i_0+1}^{m}\left (\frac{1}{q_{i_0}}-\frac{1}{q_i}\right)\int_{ \R^3}Q_i (\varepsilon x)|u_n|^{q_i}dx\\
&+\left({\frac{1}{q_{i_0}}-\frac{1}{6}}\right)\int_{ \R^3} K(\varepsilon x)|u_n|^{6}dx\\
\geq&\left(\frac{1}{2}-\frac{1}{q_{i_0}}\right)||u_n||^2_\varepsilon.
\end{aligned}
\end{equation}
It follows that $\{u_n\}$ is bounded in $H_\varepsilon $.
\end{proof}
\begin{lemma}\label{2chose1}
Let $\{u_n\}\subset H_\varepsilon$ be a $(PS)_c$ sequence  for $I_\varepsilon$ with
$0<c<\min\left\{ c_\infty,\frac{1}{3}S^{\frac{3}{2}}|K|_{\infty}^{-\frac{1}{2}} \right \} $ and such that $u_n\rightharpoonup  0$, as $n\to\infty$. Then, one of the following conclusions holds.\vspace{0.5ex}\\
\begin{tabular}{rl}
\rm{(\romannumeral1)}& $u_n\to 0$ in $H_\varepsilon$, as $n\to\infty$;\\
\rm{(\romannumeral2)}& there exists a sequence $\{y_n\}\subset \R^3$ and constants $r,\beta>0$ such that
\end{tabular}
$$\liminf\limits_{n\to \infty }\int_{B_r (y_n)} u_n^2 dx\geq\beta>0.$$ 
\end{lemma}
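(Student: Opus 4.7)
The statement is a vanishing/non-vanishing dichotomy of Lions type adapted to $(PS)_c$ sequences of $I_\varepsilon$, with the critical threshold fixed by the Sobolev constant (and, in the background, by $c_\infty$ which will be needed to exclude a nontrivial limit in later applications). The plan is to argue by contradiction: assume (ii) fails, so that
$$\sup_{y\in\R^3}\int_{B_r(y)}u_n^2\,dx\to 0\quad\text{for every }r>0,$$
and deduce (i).

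First I would invoke Lemma \ref{psbdd} to know $\{u_n\}$ is bounded in $H_\varepsilon$, and then apply Lemma \ref{vnsh} (the vanishing lemma of P.-L.\ Lions, Willem \cite{Wi}) to conclude $u_n\to 0$ in $L^s(\R^3)$ for every $s\in(2,6)$. In particular, since $q_i\in(4,6)$ and $12/5\in(2,6)$, this yields
$$\int_{\R^3}Q_i(\varepsilon x)|u_n|^{q_i}\,dx\to 0\quad(1\le i\le m),\qquad |u_n|_{12/5}\to 0.$$
By Lemma \ref{L21}(iv), $\|\phi_{u_n}\|_{\mathcal D^{1,2}}\le S^{-1/2}|u_n|_{12/5}^2\to 0$, and then by Hölder
$$\int_{\R^3}\phi_{u_n}u_n^2\,dx\le |\phi_{u_n}|_6\,|u_n|_{12/5}^2\to 0.$$

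Next I combine $I_\varepsilon(u_n)=c+o(1)$ and $\langle I_\varepsilon'(u_n),u_n\rangle=o(1)$ in two ways. From $\langle I_\varepsilon'(u_n),u_n\rangle=o(1)$ together with the vanishing of the subcritical and nonlocal terms,
$$\|u_n\|_\varepsilon^2=\int_{\R^3}K(\varepsilon x)|u_n|^6\,dx+o(1).$$
From $I_\varepsilon(u_n)-\tfrac{1}{6}\langle I_\varepsilon'(u_n),u_n\rangle=c+o(1)$ and the same vanishing estimates,
$$\tfrac{1}{3}\|u_n\|_\varepsilon^2=c+o(1),$$
so $\|u_n\|_\varepsilon^2\to 3c$ and $\int K(\varepsilon x)|u_n|^6\,dx\to 3c$ as well. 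The Sobolev embedding $|u|_6\le S^{-1/2}\|\nabla u\|_2\le S^{-1/2}\|u\|_\varepsilon$ then gives
$$\int_{\R^3}K(\varepsilon x)|u_n|^6\,dx\le |K|_\infty S^{-3}\|u_n\|_\varepsilon^6.$$
Passing to the limit yields $3c\le |K|_\infty S^{-3}(3c)^3$, hence either $c=0$ (so $\|u_n\|_\varepsilon\to 0$, which is alternative (i)) or
$$c\ge \tfrac{1}{3}S^{3/2}|K|_\infty^{-1/2},$$
contradicting the standing hypothesis $c<\tfrac{1}{3}S^{3/2}|K|_\infty^{-1/2}$. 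Therefore (i) must hold.

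The main obstacle I anticipate is making the critical piece $\int K(\varepsilon x)|u_n|^6\,dx$ behave correctly: the subcritical terms and the nonlocal term are both easily killed via Lions' lemma and Lemma \ref{L21}(iv) once vanishing is assumed, but the $L^6$ piece cannot be discarded and must instead be tied to $\|u_n\|_\varepsilon^2$ through the two identities above, so that the Sobolev inequality produces the dichotomy between $\|u_n\|_\varepsilon\to 0$ and $c\ge \tfrac{1}{3}S^{3/2}|K|_\infty^{-1/2}$. The hypothesis $c<c_\infty$ is not directly needed in this step, but it is the natural companion assumption used later to rule out nonzero weak limits when this lemma is applied to translated sequences.
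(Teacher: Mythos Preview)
Your proof is correct and follows essentially the same route as the paper: assume (ii) fails, apply Lions' vanishing lemma (Lemma~\ref{vnsh}) to kill the subcritical $Q_i$–terms and, via Lemma~\ref{L21}(iv), the nonlocal term; reduce the $(PS)_c$ identities to $\|u_n\|_\varepsilon^2=\int K(\varepsilon x)|u_n|^6\,dx+o(1)$ with common limit $l$; and then use the Sobolev inequality to force $l\ge S^{3/2}|K|_\infty^{-1/2}$, contradicting $c<\tfrac13 S^{3/2}|K|_\infty^{-1/2}$. One cosmetic slip: since the hypothesis is $c>0$, your case ``$c=0$'' is empty, so the correct final sentence is that the assumption ``(ii) fails'' is contradictory and hence (ii) holds (the paper's write-up has the same harmless redundancy, concluding $l=0$ even though that case cannot occur when $c>0$).
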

\begin{proof}
Suppose that (ii) does not occur, i.e., for any $r>0$, such that 
$$\limsup\limits_{n\to\infty}\int_{B_r (y)} u_n^2 dx=0.$$
By Lemma \ref{vnsh}, we have $u_n \to 0$ in $L^s(\R^3) $ for $2<s<6$, as $n\to\infty$. Hence, we see that 
\begin{equation}\label{qto0}
\int_{\R^3} Q_i (\varepsilon x)|u_n|^{q_i}dx\to 0,
\end{equation}
as $n\to\infty$.
Moreover, from Lemma \ref{L21} (iv), we have
\begin{equation}
\int_{ \R^3}\phi_{u_n}u_n^2 dx\le S^{-1}|u_n|^4_{12/5}\to0,
\end{equation}
thus, we have $\int_{ \R^3}\phi_{u_n}u_n^2 dx\to0$, as $n\to\infty$. Recalling that $\langle I_{\varepsilon}'(u_n),u_n \rangle=o_n(1)$ , we have 
$$||u_n||^2_\varepsilon=\int_{ \R^3} K(\varepsilon x)|u_n|^{6}dx+o_n(1).$$
It follows from Lemma \ref{psbdd} that $\{u_n\}$ is bounded in $H_\varepsilon$, up to a subsequence, we may assume that there exists $l\geq 0$ such that 
\begin{equation}\label{psto0}
||u_n||^2_\varepsilon\to l,\quad\int_{ \R^3} K(\varepsilon x)|u_n|^{6}dx\to l,
\end{equation}
as $n\to\infty$. If $l=0$, the proof is complete. If $l>0$,  by using \eqref{qto0}--\eqref{psto0} and $I_\varepsilon(u_n)=c+o_n(1)$, we get 
\begin{equation}\label{psc}
\begin{aligned}
c+o_n(1)
=  &\frac{1}{2}||u_n||^2_\varepsilon +\frac{1}{4}\int_{ \R^3}\phi_{u_n}u_n^2 dx\\
& -\sum\limits_{i=1}^{m} \frac{1}{q_i}\int_{ \R^3}Q_i (\varepsilon x)|u_n|^{q_i}dx-\frac{1}{6}\int_{ \R^3} K(\varepsilon x)|u_n|^{6}dx\\
= &\frac{1}{2}||u_n||^2_\varepsilon-\frac{1}{6}\int_{ \R^3} K(\varepsilon x)|u_n|^{6}dx+o_n(1)\\
=  &\frac{1}{3}l+o_n(1).
\end{aligned}
\end{equation} 
By the Sobolev inequality and $(f_4)$, we have that
$$||u_n||^2_\varepsilon\geq\int_{ \R^3}|\nabla u_n|^{2}dx\geq S \left(\int_{\R^3} |u_n|^{6}dx\right)^{\frac{1}{3}}\geq S|K|^{-\frac{1}{3}}_{\infty}\left(\int_{\R^3}K(\varepsilon x) |u_n|^{6}dx\right)^{\frac{1}{3}}.$$
Taking the limit as $n\to\infty$ at the last inequality, we obtain
\begin{equation}\label{lc}
l\geq S^{\frac{3}{2}}|K|_{\infty}^{-\frac{1}{2}}.
\end{equation}
From \eqref{psc} and \eqref{lc}, we get a contradiction to the definition of $c$. Therefore, $l=0$ and the desired conclusion is obtained. 
\end{proof}
\begin{lemma}\label{ps}
Let $\{u_n\}\subset H_\varepsilon$ be a $(PS)_c$ sequence  for $I_\varepsilon$ with $0<c<\min\left\{ c_\infty,\frac{1}{3}S^{\frac{3}{2}}|K|_{\infty}^{-\frac{1}{2}} \right \} $ and $u_n\rightharpoonup 0$ in $ H_\varepsilon$, then  $u_n\to 0$ in $ H_\varepsilon.$
\end{lemma}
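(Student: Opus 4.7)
The plan is to combine the dichotomy of Lemma \ref{2chose1} with an energy comparison against the limiting functional $I_\infty$, exploiting the strict bound $c<c_\infty$. If alternative (i) of Lemma \ref{2chose1} occurs, the conclusion is immediate, so assume (ii): there exist $\{y_n\}\subset\R^3$ and $r,\beta>0$ with $\liminf_n\int_{B_r(y_n)}u_n^2\,dx\ge\beta>0$, and we aim for a contradiction.

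I would first show that $|y_n|\to\infty$ along a subsequence: if $\{y_n\}$ were bounded, the compact embedding $H^1_{\mathrm{loc}}\hookrightarrow L^2_{\mathrm{loc}}$ together with $u_n\rightharpoonup 0$ in $H_\varepsilon$ would force $\int_{B_r(y_n)}u_n^2\,dx\to 0$, contradicting the concentration bound. Setting $v_n(x):=u_n(x+y_n)$, Lemma \ref{psbdd} gives that $\{v_n\}$ is bounded in $H^1(\R^3)$, so, up to a subsequence, $v_n\rightharpoonup v$ in $H^1$, $v_n\to v$ in $L^p_{\mathrm{loc}}$ for $p\in[2,6)$, and $v_n\to v$ a.e., with $v\not\equiv 0$ since the $L^2$-mass on $B_r(0)$ persists in the limit.

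The heart of the argument is to identify $v$ as a nontrivial solution of the limiting problem \eqref{limpro}. For any $\psi\in C_0^\infty(\R^3)$ I would test $\langle I_\varepsilon'(u_n),\psi(\cdot-y_n)\rangle=o(1)$ and change variables; the translation invariance $\phi_{u_n}(\cdot+y_n)=\phi_{v_n}$ allows Lemma \ref{L22}(iii) to handle the nonlocal term, while the gradient term converges by weak convergence. For the terms involving $V,Q_i,K$, since $\mathrm{supp}(\psi)$ is compact and $|y_n|\to\infty$, for every $\eta>0$ and $n$ large one has $V(\varepsilon(x+y_n))\ge V_\infty-\eta$ and $Q_i(\varepsilon(x+y_n))\le Q_i^\infty+\eta$ on $\mathrm{supp}(\psi)$, while $K(\varepsilon(x+y_n))\le K^\infty$ by $(f_4)$; combined with the local strong convergence of $v_n$ and Fatou-type estimates, passing first $n\to\infty$ and then $\eta\to0$ yields $\langle I_\infty'(v),\psi\rangle=0$. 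Hence $v\in\mathcal{N}_\infty$ and $I_\infty(v)\ge c_\infty$.

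Finally I would run a Brezis--Lieb splitting argument: combining the classical Brezis--Lieb lemma on the $L^{q_i}$- and $L^6$-terms with the BL-splitting of $F(u)=\int\phi_u u^2$ noted after Lemma \ref{L22} and the standard decomposition of $\|\cdot\|_\varepsilon^2$, and using the same liminf/limsup comparisons for $V,Q_i,K$, one obtains $I_\varepsilon(u_n)\ge I_\infty(v)+I_\varepsilon(u_n-v(\cdot-y_n))+o(1)$. The residual energy is non-negative by the Nehari-type computation in \eqref{bddlow} applied to the approximate critical sequence $u_n-v(\cdot-y_n)$, so passing to the limit gives $c\ge c_\infty$, contradicting $c<c_\infty$; therefore (ii) cannot hold and $u_n\to 0$ in $H_\varepsilon$. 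The main obstacle is identifying $v$ as a critical point of $I_\infty$ when the potentials only satisfy liminf/limsup control at infinity; the $\eta$-slack trick handles this, provided one checks that the slack contributions, bounded by $\eta\|v_n\|_{H^1}^2$ and $\eta|v_n|_{q_i}^{q_i}$, do not compromise the strict inequality $c<c_\infty$ once $\eta\to 0$.
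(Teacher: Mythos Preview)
There is a genuine gap in the step where you identify the weak limit $v$ as a critical point of $I_\infty$. The potentials in this paper are only assumed to satisfy one-sided asymptotic conditions: $V_\infty=\liminf_{|x|\to\infty}V(x)$, $Q_i^\infty=\limsup_{|x|\to\infty}Q_i(x)$ and $K(x)\le K^\infty$. Your ``$\eta$-slack'' gives $V(\varepsilon(x+y_n))\ge V_\infty-\eta$, $Q_i(\varepsilon(x+y_n))\le Q_i^\infty+\eta$, $K(\varepsilon(x+y_n))\le K^\infty$ on $\mathrm{supp}\,\psi$, but \emph{not} the reverse inequalities. Consequently, passing to the limit in the tested equation only shows that $v$ solves
\[
-\Delta v+\phi_v v+\widetilde V(x)v=\sum_i\widetilde Q_i(x)|v|^{q_i-2}v+\widetilde K(x)|v|^4v
\]
for some (possibly $x$-dependent) limits with $\widetilde V\ge V_\infty$, $\widetilde Q_i\le Q_i^\infty$, $\widetilde K\le K^\infty$, not the limiting equation \eqref{limpro}. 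In particular you cannot conclude $v\in\mathcal N_\infty$, and the key bound $I_\infty(v)\ge c_\infty$ is unjustified. A related difficulty infects the Brezis--Lieb step: to assert that the residual $w_n=u_n-v(\cdot-y_n)$ carries nonnegative energy via \eqref{bddlow} you need $\langle I_\varepsilon'(w_n),w_n\rangle=o(1)$, which in turn requires $I_\varepsilon'(v(\cdot-y_n))\to0$; this again hinges on $v$ solving the limiting problem.

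The paper's proof avoids this obstruction entirely by never trying to identify a limiting equation. Instead it projects the \emph{original} sequence $u_n$ onto $\mathcal N_\infty$ via $t_n u_n\in\mathcal N_\infty$, so that $I_\infty(t_nu_n)\ge c_\infty$ holds automatically, and then compares $I_\infty(t_nu_n)$ with $I_\varepsilon(u_n)$. For this comparison the one-sided bounds $V(\varepsilon x)\ge V_\infty-\sigma$, $Q_i(\varepsilon x)\le Q_i^\infty+\sigma$, $K(\varepsilon x)\le K^\infty$ on $\{|\varepsilon x|\ge R\}$ go in exactly the right direction. The only subtle point is showing $\limsup_n t_n\le1$, which is where Lemma \ref{2chose1}(ii) actually enters: it is used, together with a smallness estimate on $\int|u_n|^6$, to rule out $t_n\ge1+\delta$. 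You may want to rework your argument along these lines.
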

\begin{proof}
Assume by contradiction that $u_n\nrightarrow 0$ in $H_\varepsilon$. From Lemma \ref{L23} (i) and (ii), there exists a positive bounded sequence $\{t_n\}$  such that $\{t_nu_n\}\subset\mathcal{N}_\infty$. Then we claim that $\limsup\limits_{n\to \infty }t_n\le1$. Arguing indirectly, there exists $\delta>$ and a subsequence still denoted by $\{t_n\} $, such that $t_n\geq1+\delta$, for all $n\in\mathbb{N}$.
Since $\langle I_{\varepsilon}'(u_n),u_n \rangle=o_n(1)$, we get
\begin{equation}\label{str0}
\begin{aligned}
&||u_n||^2_{\mathcal{D}^{1,2}} +\int_{ \R^3} V(\varepsilon x)|u_n|^{2}dx+\int_{ \R^3}\phi_{u_n}u_n^2 dx \\
= &\sum\limits_{i=1}^{m} \int_{ \R^3}Q_i (\varepsilon x)|u_n|^{q_i}dx + 
\int_{ \R^3} K(\varepsilon x)|u_n|^{6}dx+o_n(1).
\end{aligned}
\end{equation}
From $t_nu_n\in\mathcal{N}_\infty$, we have
\begin{equation}\label{str01}
\begin{aligned}
&t_n^2||u_n||^2_{\mathcal{D}^{1,2}} +t_n^2\int_{ \R^3} V_\infty|u_n|^{2}dx+t_n^4\int_{ \R^3}\phi_{u_n}u_n^2 dx\\
=&\sum\limits_{i=1}^{m}t_n^{q_i} \int_{ \R^3}Q_i^\infty|u_n|^{q_i}dx + t_n^6
\int_{ \R^3} K^\infty|u_n|^{6}dx.
\end{aligned}
\end{equation}
It follows from \eqref{str0} and \eqref{str01} that
\begin{equation}\label{str02}
\begin{aligned}
&o_n(1)+\left(t_n^{2-q_{i_0}}-1\right)||u_n||^2_{\mathcal{D}^{1,2}} 
+\int_{ \R^3}\left( t_n^{2-q_{i_0}}V_\infty-V(\varepsilon x)\right)|u_n|^{2}dx\\
&+\left(t_n^{4-q_{i_0}}-1\right)\int_{ \R^3}\phi_{u_n}u_n^2 dx
-\sum\limits_{i=1}^{i_0-1}\int_{ \R^3}\left( t_n^{q_i-q_{i_0}}Q_i^\infty-Q_i(\varepsilon x)\right)|u_n|^{q_i}dx\\
=&\sum\limits_{i=i_0}^{m}\int_{ \R^3}\left( t_n^{q_i-q_{i_0}}Q_i^\infty-Q_i(\varepsilon x)\right)|u_n|^{q_i}dx
 + \int_{ \R^3}\left(t_n^{6-q_{i_0}} K^\infty-K(\varepsilon x)\right)|u_n|^{6}dx.
\end{aligned}
\end{equation}	
By using the condition $(f_2)$, $4<q_1<q_2<\cdots<q_m<6$, $t_n>1$, and the definition of $ V_\infty$ and $Q_i^\infty$, for any $\sigma>0$, there exists $R=R(\sigma)>0$ such that 
\begin{equation}\label{case21}
V(\varepsilon x)\geq V_\infty-\sigma>t_n^{2-q_{i_0}}V_\infty-\sigma
\end{equation}
and
\begin{equation}
t_n^{q_i-q_{i_0}}Q_i^\infty+\sigma>Q_i^\infty+\sigma\geq Q_i(\varepsilon x),\quad 1\le i\le i_0-1
\end{equation}
and
\begin{equation}
t_n^{q_i-q_{i_0}}Q_i^\infty+\sigma>Q_i^\infty+\sigma\geq Q_i(\varepsilon x),\quad i_0\le i\le n,
\end{equation}
for any $|\varepsilon x|\geq R$. Moreover, it follows from $(f_4)$ that
\begin{equation}\label{case22}
t_n^{6-q_{i_0}}K^\infty\geq K^\infty> K(\varepsilon x),
\end{equation}
for any $x\in\R^3$. Since $u_n\rightharpoonup 0$ in $H_\varepsilon$, we get
\begin{equation}\label{str03}
u_n\to0\hspace{1ex}\text{in}\hspace{1ex}L^q_{loc}(\R^3),\hspace{1ex}q\in[2,6).
\end{equation}
Thus, note that $\{u_n\}$ is bounded in $L^p(\R^3)(2\le p\le 6)$, we deduce from \eqref{str02}--\eqref{str03} that
\begin{equation}\label{327}
\int_{ \R^3}|u_n|^{6}dx<C\sigma.
\end{equation}
On the other hand, Lemma \ref{2chose1} shows that
there exists a sequence $\{y_n\}\subset \R^3$ and constants $r,\beta>0$ such that
\begin{equation}\label{str04}
\liminf\limits_{n\to \infty }\int_{B_r (y_n)} u_n^2 dx\geq\beta>0.
\end{equation}
If we set $v_n(x)=u_n(x+y_n)$, then there exists a non-zero function $v(x)$ such that, up to a subsequence, $v_n\rightharpoonup v$ in $H_\varepsilon$, $v_n\to v$ in $L_{loc}^q(\R^3)$, $q\in[2,6)$, and $v_n\to v$ a.e. in $\R^3$. Moreover, by \eqref{str04}, we have that there exists a subset $\Lambda\subset B_r(0)$ with positive measure such that $v\neq0$ a.e. in $\Lambda$. It follows from Fatou's lemma  and Sobolev inequality that
$$\int_{ \R^3}|u_n|^{6}dx=\int_{ \R^3}|v_n|^{6}dx\geq\int_{ \Lambda}|v|^{6}dx\geq C \int_{ \Lambda}|v_n|^{2}dx>\beta_0>0,$$
for some $\beta_0>0$, which contradicts to \eqref{327} and the claim is true.
\par
We next divide the proof into two separate cases.
\par
Case 1: $\limsup\limits_{n\to \infty }t_n=1.$ 
In this case, there exists a subsequence, still denote by $\{t_n\}$, such that  $t_n\to1$ as $n\to\infty$.
Observe that
\begin{equation}
\begin{aligned}
&I_\infty(t_nu_n)-I_\varepsilon(u_n)\\
=\hspace{1ex}&\frac{t_n^2-1}{2}||u_n||^2_{\mathcal{D}^{1,2}}+\frac{1}{2}\int_{ \R^3}\left( t_n^{2}V_\infty-V(\varepsilon x)\right)u_n^2 dx +\frac{t_n^4-1}{4}\int_{ \R^3}\phi_{u_n}u_n^2 dx\\
 &- \sum\limits_{i=1}^{m} \frac{1}{q_i}\int_{ \R^3}\left( t_n^{q_i}Q_i^\infty-Q_i(\varepsilon x)\right)|u_n|^{q_i}dx-\frac{1}{6}\int_{ \R^3}\left( t_n^{6}K^\infty-K(\varepsilon x)\right)|u_n|^{6}dx.
\end{aligned}
\end{equation}
Arguing as in the proof of the claim above,  fixed $n$ large enough , for any $\sigma>0$, there exists $R=R(\sigma)>0$ such that 
\begin{equation}\label{str1}
V(\varepsilon x)\geq t_n^{2}V_\infty-\sigma,\hspace{1ex}t_n^{q_i}Q_i^\infty+\sigma\geq Q_i(\varepsilon x)(1\le i\le n)
\end{equation}
for any $|\varepsilon x|\geq R$, and
\begin{equation}\label{str2}
t_n^{6}K^\infty\geq  K(\varepsilon x),
\end{equation}
for any $x\in\R^3$. Thus, we deduce from\eqref{str03}, \eqref{str1} and \eqref{str2} that
$$I_\infty(t_nu_n)-I_\varepsilon(u_n)\le o_n(1)+C\sigma.$$
Taking limit in the above inequality, we have $$
\begin{aligned}c+o_n(1)=I_\varepsilon(u_n)\geq& I_\varepsilon(u_n)+c_\infty-I_\infty(t_nu_n)\\
\geq&c_\infty+o_n(1),
\end{aligned}$$
which is a contradiction.
\par
Case 2: $\limsup\limits_{n\to \infty }t_n<1.$ In this case, we suppose that, without loss of generality, $t_n<1$ for all $n\in \mathbb{N}$. From \eqref{case21}--\eqref{str03} and  $\{u_n\}$ is bounded in $L^p(\R^3)(2\le p\le 6)$, for any $\sigma>0$, we have 
\begin{equation}\label{case23}
 \int_{ \R^3}\left( V_\infty-V(\varepsilon x)\right)u_n^2 dx\le C\sigma+o_n(1),
\end{equation}
\begin{equation}
\int_{ \R^3}\left( Q_i^\infty-Q_i(\varepsilon x)\right)|u_n|^{q_i}dx\geq -C\sigma+o_n(1),
\end{equation}
\begin{equation}\label{case24}
\int_{ \R^3}\left( K^\infty-K(\varepsilon x)\right)|u_n|^{6}dx\geq -C\sigma+o_n(1).
\end{equation}
Then, \eqref{case23}--\eqref{case24} imply that 
$$
\begin{aligned}
c_\infty\le & I_\infty(t_nu_n)\\
= &I_\varepsilon(t_nu_n)+\frac{t_n^{2}}{2}\int_{ \R^3}\left( V_\infty-V(\varepsilon x)\right)u_n^2 dx \\
&-\sum\limits_{i=1}^{m} \frac{t_n^{q_i}}{q_i}\int_{ \R^3}\left( Q_i^\infty-Q_i(\varepsilon x)\right)|u_n|^{q_i}dx-\frac{t_n^{6}}{6}\int_{ \R^3}\left( K^\infty-K(\varepsilon x)\right)|u_n|^{6}dx\\
\le& I_\varepsilon(t_nu_n)+C\sigma +o_n(1)\\
\le& I_\varepsilon(u_n)+C\sigma+o_n(1)\\
=&c+C\sigma+o_n(1).
\end{aligned}$$
Let $\sigma\to0$ and $n\to\infty$, we have $c_\infty\le c$, which yields a contradiction ends the proof.
\end{proof}
\begin{lemma}\label{pscc}
	$I_\varepsilon$ satisfies the $(PS)_c$ conditions in $H_\varepsilon$ with  $0<c<\min\left\{c_\infty, \frac{1}{3}S^{\frac{3}{2}}|K|_{\infty}^{-\frac{1}{2}} \right\}$.
\end{lemma}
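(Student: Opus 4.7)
The plan is to reduce the $(PS)_c$ property to Lemma \ref{ps} by a standard weak-limit/splitting argument. Let $\{u_n\} \subset H_\varepsilon$ be any $(PS)_c$ sequence. By Lemma \ref{psbdd} it is bounded, so up to a subsequence $u_n \rightharpoonup u$ in $H_\varepsilon$, $u_n \to u$ in $L^q_{loc}(\R^3)$ for $q \in [2,6)$, and $u_n \to u$ a.e.\ in $\R^3$. First I would verify that the weak limit $u$ is a critical point: testing $\langle I_\varepsilon'(u_n), \varphi\rangle \to 0$ against $\varphi \in C_0^\infty(\R^3)$ and passing to the limit (the linear gradient/$V$ part by weak convergence, the terms $\int Q_i(\varepsilon x)|u_n|^{q_i-2}u_n\varphi$ and $\int K(\varepsilon x)|u_n|^4 u_n\varphi$ by dominated convergence on $\operatorname{supp}\varphi$ using the local $L^q$ convergence, and the nonlocal term by Lemma \ref{L22}(iii)) gives $I_\varepsilon'(u) = 0$ after density of $C_0^\infty(\R^3)$ in $H_\varepsilon$.

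Next I set $w_n := u_n - u$, so $w_n \rightharpoonup 0$ in $H_\varepsilon$. The classical Brezis--Lieb lemma applied to $\|\cdot\|_\varepsilon^2$, $\int Q_i(\varepsilon x)|\cdot|^{q_i}\,dx$ and $\int K(\varepsilon x)|\cdot|^{6}\,dx$, together with the BL--splitting of the nonlocal functional $F(u) = \int_{\R^3}\phi_u u^2\,dx$ and of its derivative (noted in the remark following Lemma \ref{L22}), yields
\begin{equation*}
I_\varepsilon(w_n) = I_\varepsilon(u_n) - I_\varepsilon(u) + o_n(1), \qquad I_\varepsilon'(w_n) = I_\varepsilon'(u_n) - I_\varepsilon'(u) + o_n(1) \text{ in } H_\varepsilon^{-1},
\end{equation*}
so $\{w_n\}$ is a $(PS)_{c'}$ sequence for $I_\varepsilon$ with $c' := c - I_\varepsilon(u)$. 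Since $I_\varepsilon'(u) = 0$, the same algebraic identity as in \eqref{bddlow} applied to $u$ forces $I_\varepsilon(u) \geq \left(\tfrac{1}{2} - \tfrac{1}{q_{i_0}}\right)\|u\|_\varepsilon^2 \geq 0$, hence $0 \leq c' \leq c < \min\{c_\infty,\tfrac{1}{3}S^{3/2}|K|_\infty^{-1/2}\}$.

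If $c' > 0$, Lemma \ref{ps} applied to $\{w_n\}$ (which weakly converges to $0$ and whose level lies in the admissible range) immediately gives $w_n \to 0$ in $H_\varepsilon$, that is $u_n \to u$. The step I expect to be the main obstacle is the borderline case $c' = 0$, which is not directly covered by Lemma \ref{ps}. I would handle it in place by invoking the same manipulation as in \eqref{bddlow} once more: from $I_\varepsilon(w_n) \to 0$ and $\langle I_\varepsilon'(w_n), w_n\rangle \to 0$ together with $(f_2)$ and $(f_4)$, all three nonlinear contributions drop out with a favourable sign and one obtains
\begin{equation*}
\left(\tfrac{1}{2} - \tfrac{1}{q_{i_0}}\right)\|w_n\|_\varepsilon^2 \leq I_\varepsilon(w_n) - \tfrac{1}{q_{i_0}}\langle I_\varepsilon'(w_n), w_n\rangle = o_n(1),
\end{equation*}
hence $w_n \to 0$ in this case as well. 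The principal technical ingredient throughout is the Brezis--Lieb accounting for the nonlocal term $F$, but since that splitting property is already flagged in the remark after Lemma \ref{L22}, no additional work is needed beyond the standard bookkeeping.
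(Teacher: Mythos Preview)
Your proposal is correct and follows essentially the same route as the paper: pass to a weak limit $u$, show it is a critical point, split $w_n=u_n-u$ via Brezis--Lieb (including the nonlocal term, as flagged in the remark after Lemma~\ref{L22}), bound the residual level by $c$ using $I_\varepsilon(u)\ge 0$, and invoke Lemma~\ref{ps}. If anything you are more careful than the paper, which simply writes ``it follows from Lemma~\ref{ps} that $\omega_n\to 0$'' without explicitly recording that $I_\varepsilon'(\omega_n)\to 0$ or treating the borderline $c'=0$; your direct estimate $\bigl(\tfrac12-\tfrac1{q_{i_0}}\bigr)\|w_n\|_\varepsilon^2\le I_\varepsilon(w_n)-\tfrac1{q_{i_0}}\langle I_\varepsilon'(w_n),w_n\rangle=o_n(1)$ closes that gap cleanly.
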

\begin{proof}
Let $\{u_n\}\subset H_\varepsilon$ be a $(PS)_\varepsilon$ for $I_\varepsilon$ with $0<c<\min\left\{c_\infty, \frac{1}{3}S^{\frac{3}{2}}|K|_{\infty}^{-\frac{1}{2}} \right\}$. By Lemma \ref{psbdd}, we get that $\{u_n\}$ is bounded in $H_\varepsilon$. Then, there exists $u\in H_\varepsilon$ such that $u_n\rightharpoonup u$ in $H_\varepsilon$. By a standard argument, we get $I_\varepsilon'(u_n)\to I_\varepsilon'(u)=0$, i.e., $u$ is a critical point of $I_\varepsilon$. 
Let  $\omega_n=u_n-u$, by Lemma \ref{L22} and Brezis--Lieb Lemma, it is not difficult to see that 
\begin{equation}\label{psc1} I_\varepsilon(\omega_n)=I_\varepsilon(u_n)-I_\varepsilon(u)+o_n(1)=c-I_\varepsilon(u)+o_n(1).
\end{equation}
Under our conditions, we see that 
\begin{equation}\label{psc2}
\begin{aligned}
I_\varepsilon(u)
= & \hspace{0.4em}I_\varepsilon (u)-\frac{1}{q_{i_0}}\langle I_{\varepsilon}'(u),u \rangle\\
=& \left(\frac{1}{2}-\frac{1}{q_{i_0}}\right)||u||^2_\varepsilon +\left(\frac{1}{4}-\frac{1}{q_{i_0}}\right)\int_{ \R^3}\phi_{u}u^2 dx \\
&- \sum\limits_{i=1}^{i_0-1} \left(\frac{1}{q_i}-\frac{1}{q_{i_0}}\right)\int_{ \R^3}Q_i (\varepsilon x)|u|^{q_i}dx+\sum\limits_{i=i_0+1}^{m}\left (\frac{1}{q_{i_0}}-\frac{1}{q_i}\right)\int_{ \R^3}Q_i (\varepsilon x)|u|^{q_i}dx\\
&+\left({\frac{1}{q_{i_0}}-\frac{1}{6}}\right)\int_{ \R^3} K(\varepsilon x)|u|^{6}dx\\
\geq&\left(\frac{1}{2}-\frac{1}{q_{i_0}}\right)||u||^2_\varepsilon\\
\geq&0.
\end{aligned}
\end{equation}
Then, \eqref{psc1} and \eqref{psc2} imply that 
$$I_\varepsilon(\omega_n)\le c.$$
It follows from Lemma \ref{ps} that $\omega_n\to 0$ in $H_\varepsilon$, then $u_n\to u$ in $H_\varepsilon$ and the proof is complete. 
\end{proof}
\begin{lemma}\label{L39}
Problem \eqref{22} has at least a positive ground state solution in $H_\varepsilon$.
\end{lemma}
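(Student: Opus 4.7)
The plan is to combine the mountain-pass structure of $I_\varepsilon$ with the compactness already established at energies below the threshold $\min\{c_\infty,\tfrac{1}{3}S^{3/2}|K|_\infty^{-1/2}\}$. By Lemma \ref{L24}, $I_\varepsilon$ has the mountain-pass geometry, and Lemma \ref{L25} identifies the mountain-pass value $c_\varepsilon^{**}$ with the Nehari ground-state level $c_\varepsilon$. The Ambrosetti--Rabinowitz mountain-pass theorem (without a compactness assumption) then supplies a sequence $\{u_n\}\subset H_\varepsilon$ with $I_\varepsilon(u_n)\to c_\varepsilon$ and $I_\varepsilon'(u_n)\to 0$.

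The next step is to confirm that $c_\varepsilon$ lies in the range where compactness applies. Lemma \ref{cric1} gives $\limsup_{\varepsilon\to 0}c_\varepsilon\le c_0$ as well as $c_\varepsilon\ge C>0$, the hypothesis \eqref{14} provides $c_0<c_\infty$, and Lemma \ref{cric2} furnishes the strict estimate $c_0<\tfrac{1}{3}S^{3/2}|K|_\infty^{-1/2}$. Combining these, for all sufficiently small $\varepsilon>0$ one has $0<c_\varepsilon<\min\{c_\infty,\tfrac{1}{3}S^{3/2}|K|_\infty^{-1/2}\}$, so Lemma \ref{pscc} applies to $\{u_n\}$ and produces $u_\varepsilon\in H_\varepsilon$ with $u_n\to u_\varepsilon$ strongly. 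Continuity of $I_\varepsilon$ and $I_\varepsilon'$ then yields $I_\varepsilon'(u_\varepsilon)=0$ and $I_\varepsilon(u_\varepsilon)=c_\varepsilon>0$; in particular $u_\varepsilon\not\equiv 0$ and $u_\varepsilon\in\mathcal{N}_\varepsilon$, which makes it a ground state.

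For positivity I would run the entire argument with the truncated functional in which every $|u|^{q_i-2}u$ and $|u|^4u$ is replaced by $(u^+)^{q_i-1}$ and $(u^+)^5$; the mountain-pass geometry, the energy estimates of Lemmas \ref{cric1}--\ref{cric2}, and the Palais--Smale analysis of Lemmas \ref{2chose1}--\ref{pscc} all transfer verbatim. Testing the resulting Euler--Lagrange equation against $u_\varepsilon^-$ and using the identities $u\,u^-=-(u^-)^2$ and $\nabla u\cdot\nabla u^-=-|\nabla u^-|^2$ a.e., together with $\phi_{u_\varepsilon}\ge 0$ and $V(\varepsilon x)\ge V_0>0$, gives
\[
\int_{\R^3}|\nabla u_\varepsilon^-|^2\,dx+\int_{\R^3}V(\varepsilon x)(u_\varepsilon^-)^2\,dx+\int_{\R^3}\phi_{u_\varepsilon}(u_\varepsilon^-)^2\,dx=0,
\]
so $u_\varepsilon^-\equiv 0$ and $u_\varepsilon\ge 0$ solves \eqref{22}. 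Standard elliptic bootstrapping (Moser iteration, applicable despite the critical exponent because $u_\varepsilon\in L^6(\R^3)$) gives $u_\varepsilon\in C^{1,\alpha}_{\rm loc}(\R^3)$, and the strong maximum principle applied to the non-negative solution of the linear-like equation with non-negative zeroth-order coefficient $V(\varepsilon x)+\phi_{u_\varepsilon}$ forces $u_\varepsilon>0$ on $\R^3$.

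The substantive obstacle has already been absorbed into Lemma \ref{pscc}: the critical exponent $|u|^4 u$ and the nonlocal term $\phi_u u$ destroy the compactness of arbitrary $(PS)$ sequences, and the whole scheme hinges on keeping $c_\varepsilon$ strictly below \emph{both} thresholds $c_\infty$ and $\tfrac{1}{3}S^{3/2}|K|_\infty^{-1/2}$. This in turn relies crucially on the ground-energy function $G(s)$ and the bound $c_\varepsilon\le c_0+o(1)$ of Lemma \ref{cric1}, combined with the extra hypothesis \eqref{14}; without the latter one cannot exclude energy leakage at infinity toward the limit problem \eqref{limpro}.
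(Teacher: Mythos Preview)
Your proposal is correct and follows essentially the same route as the paper: mountain-pass geometry (Lemma~\ref{L24}), identification of the mountain-pass level with $c_\varepsilon$ (Lemma~\ref{L25}), the energy bound $0<c_\varepsilon<\min\{c_\infty,\tfrac{1}{3}S^{3/2}|K|_\infty^{-1/2}\}$ via Lemmas~\ref{cric1}--\ref{cric2} and \eqref{14}, compactness from Lemma~\ref{pscc}, and positivity via the truncated functional tested against $u_\varepsilon^-$ plus the strong maximum principle. The only cosmetic addition is your explicit mention of Moser iteration for local regularity before invoking the maximum principle, which the paper leaves implicit.
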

\begin{proof}
 From Lemma \ref{L24}, we have that $I_{\varepsilon}$ satisfies the mountain pass geometry structure. Moreover, using a version of the mountain pass theorem without $(PS)$ condition, there exists a $(PS)_{c_\varepsilon}$ sequence  $\{u_n\}\subset H_\varepsilon$ for $I_\varepsilon$. Lemma \ref{cric1} and Lemma \ref{cric2} imply that  $0<c_\varepsilon<\min\left\{c_\infty, \frac{1}{3}S^{\frac{3}{2}}|K|_{\infty}^{-\frac{1}{2}} \right\}$ for $\varepsilon>0$ small enough. Therefore, with the aim of Lemma \ref{pscc}, we conclude that there exists $u_\varepsilon\in H_\varepsilon$ such that
 $$I_\varepsilon(u_\varepsilon)=c_\varepsilon,\hspace{1ex} \text{and} \hspace{1ex}I_\varepsilon'(u_\varepsilon)=0.$$
It follows from \eqref{3eq} that $u_\varepsilon$ is a ground state solution of \eqref{22}. If we denote $u_\varepsilon^{\pm}=\max\{\pm u_\varepsilon, 0\}$, and replace $I_\varepsilon$ by the following functional
$$ I^{+}_\varepsilon(u_\varepsilon)=\frac{1}{2}||u_\varepsilon||^2_\varepsilon +\frac{1}{4}\int_{ \R^3}\phi_{u_\varepsilon}u_\varepsilon^2 dx - \sum\limits_{i=1}^{m} \frac{1}{q_i}\int_{ \R^3}Q_i (\varepsilon x)|u_\varepsilon^+|^{q_i}dx-\frac{1}{6}\int_{ \R^3} K(\varepsilon x)|u_\varepsilon^+|^{6}dx.$$
Repeating the above proof and calculations, we have
$$0=\langle  (I^{+})'_\varepsilon(u_\varepsilon), u_\varepsilon^-\rangle=||u_\varepsilon^-||^2_\varepsilon +\int_{ \R^3}\phi_{u_\varepsilon^-}(u_\varepsilon^-)^2 dx\geq ||u_\varepsilon^-||^2_\varepsilon,$$
which implies $u_\varepsilon\geq 0$ in $\R^3$. The strong maximum principle implies that $u_\varepsilon(x)>0$ for all $x\in\R^3$. The proof is complete.
\end{proof}
 \vskip2mm
 {\section{Concentration of positive ground state solutions}\label{sec4}}
 \setcounter{equation}{0}
 \vskip2mm
In this section, in order to discuss the concentration behavior of ground state solutions $v_\varepsilon$ of \eqref{1} with $\varepsilon\to0$, we will consider the family $u_\varepsilon(x)=v_\varepsilon(\varepsilon x)$, which is a family of positive ground state  solutions of \eqref{22}.
\begin{lemma}\label{L41}
	There exist  $\varepsilon_*>0$, $\{y_\varepsilon\}\subset\R^3$ and $r$, $\beta>0$, such that
	$$\int_{B_r (y_\varepsilon)} |u_\varepsilon|^2 dx\geq\beta,\hspace{1ex}\text{for all}\hspace{1ex}\varepsilon\in(0,\varepsilon_*).$$ 
\end{lemma}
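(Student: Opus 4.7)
The plan is to rule out vanishing via a contradiction with the energy bounds already proved. First I would observe that the family $\{u_\varepsilon\}$ is uniformly bounded in $H_\varepsilon$: since $u_\varepsilon\in\mathcal{N}_\varepsilon$ and $I_\varepsilon(u_\varepsilon)=c_\varepsilon$, the coercivity estimate in Lemma \ref{L23}(iii) together with the upper bound $\limsup_{\varepsilon\to 0}c_\varepsilon\le c_0$ from Lemma \ref{cric1} gives a uniform bound on $\|u_\varepsilon\|_\varepsilon$. Lemma \ref{L23}(iv) also gives a uniform lower bound $\|u_\varepsilon\|_\varepsilon\ge\kappa$.

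Suppose, for contradiction, the conclusion fails. Then along some sequence $\varepsilon_n\to 0$ we have, for every $r>0$, $\sup_{y\in\R^3}\int_{B_r(y)}|u_{\varepsilon_n}|^2\,dx\to 0$. By Lemma \ref{vnsh} this forces $u_{\varepsilon_n}\to 0$ in $L^s(\R^3)$ for every $s\in(2,6)$. Since each $Q_i$ is bounded by $(f_3)$ and $q_i\in(4,6)$, it follows that
\begin{equation*}
\sum_{i=1}^m\int_{\R^3}Q_i(\varepsilon_n x)|u_{\varepsilon_n}|^{q_i}\,dx\to 0,
\end{equation*}
and Lemma \ref{L21}(iv) with $|u_{\varepsilon_n}|_{12/5}\to 0$ yields $\int_{\R^3}\phi_{u_{\varepsilon_n}}u_{\varepsilon_n}^2\,dx\to 0$.

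Using $\langle I_{\varepsilon_n}'(u_{\varepsilon_n}),u_{\varepsilon_n}\rangle=0$ and passing to a further subsequence, set $l=\lim_{n\to\infty}\|u_{\varepsilon_n}\|_{\varepsilon_n}^2$; then
\begin{equation*}
l=\lim_{n\to\infty}\int_{\R^3}K(\varepsilon_n x)|u_{\varepsilon_n}|^{6}\,dx,
\end{equation*}
and the uniform lower bound from Lemma \ref{L23}(iv) gives $l\ge\kappa^2>0$. Substituting back into $I_{\varepsilon_n}(u_{\varepsilon_n})$ yields $c_{\varepsilon_n}=\tfrac{1}{3}l+o(1)$. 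The Sobolev inequality combined with $(f_4)$ (as in the computation leading to \eqref{lc}) gives $\|u_{\varepsilon_n}\|_{\varepsilon_n}^2\ge S|K|_\infty^{-1/3}\bigl(\int K(\varepsilon_n x)|u_{\varepsilon_n}|^6\,dx\bigr)^{1/3}$, which in the limit forces $l\ge S^{3/2}|K|_\infty^{-1/2}$.

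Therefore $\lim_{n\to\infty}c_{\varepsilon_n}\ge\tfrac{1}{3}S^{3/2}|K|_\infty^{-1/2}$, which contradicts $\limsup_{\varepsilon\to 0}c_\varepsilon\le c_0<\tfrac{1}{3}S^{3/2}|K|_\infty^{-1/2}$ coming from Lemmas \ref{cric1} and \ref{cric2}. The main (minor) obstacle is simply to justify passing all three integral terms to the limit cleanly; once vanishing kills the subcritical nonlinearity and the nonlocal term via Lemma \ref{L21}(iv), the critical term alone cannot support a ground state at an energy level below $\tfrac{1}{3}S^{3/2}|K|_\infty^{-1/2}$, and this is exactly what Lemma \ref{cric2} was designed to provide.
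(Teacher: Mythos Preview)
Your argument is correct and follows essentially the same route as the paper: assume vanishing along a sequence $\varepsilon_n\to 0$, use Lions' lemma to kill the subcritical and nonlocal terms, and then use the Nehari constraint together with the Sobolev inequality to force $c_{\varepsilon_n}\ge \tfrac{1}{3}S^{3/2}|K|_\infty^{-1/2}$, contradicting Lemmas~\ref{cric1}--\ref{cric2}. The paper compresses all of this into ``repeating the arguments employed in the proof of Lemma~\ref{2chose1}''; your version simply unpacks that reference, and your use of the lower bound $\|u_\varepsilon\|_\varepsilon\ge\kappa$ from Lemma~\ref{L23}(iv) to exclude $l=0$ directly is a clean touch.
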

\begin{proof}
Arguing by contradiction, suppose that the lemma does not hold. Then, there exists a sequence $\varepsilon_n\to0$, such that for all $r>0$,
$$\lim\limits_{n\to\infty}\sup\limits_{y\in \R^3}\int_{B_r (y)}|u_{\varepsilon_n}|^2dx=0.$$
Then, repeating the arguments employed in the proof of Lemma \ref{2chose1}, we can obtain that
$$c_{\varepsilon_n}\geq \frac{1}{3}S^{\frac{3}{2}}|K|_{\infty}^{-\frac{1}{2}},$$
which is a contradiction with $c_\varepsilon<\frac{1}{3}S^{\frac{3}{2}}|K|_{\infty}^{-\frac{1}{2}}.$ The proof is complete.
\end{proof}
\begin{lemma}\label{L42}
The family $\{\varepsilon y_\varepsilon\}$ is bounded as $\varepsilon\to0$. Moreover, assume that $\varepsilon_n y_{\varepsilon_n}\to x_0$ under a choice of subsequence, then we have $x_0\in \mathcal{G}$, i.e., $$G(x_0)=\inf\limits_{s\in \R^3}G(s).$$ 
\end{lemma}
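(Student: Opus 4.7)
The strategy is to translate the ground states so that the non-vanishing ball of Lemma \ref{L41} is centered at the origin, extract a weak limit $w$, and then compare the energy $c_{\varepsilon_n}$ with $c_\infty$ (for the boundedness part) and with $G(x_0)$ (for the identification part). Fix a sequence $\varepsilon_n \to 0$ and set $w_n(x) := u_{\varepsilon_n}(x + y_{\varepsilon_n})$. The Nehari identity \eqref{bddlow} applied to $u_{\varepsilon_n}$ combined with $c_{\varepsilon_n} \le c_0 + o(1)$ from Lemma \ref{cric1} yields a uniform $H^1$-bound, so up to a subsequence $w_n \rightharpoonup w$ in $H^1(\R^3)$, strongly in $L^q_{loc}(\R^3)$ for $q \in [2,6)$ and pointwise a.e.; Lemma \ref{L41} forces $w \not\equiv 0$.

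\emph{Step 1: boundedness of $\{\varepsilon_n y_{\varepsilon_n}\}$.} Suppose for contradiction $|\varepsilon_n y_{\varepsilon_n}| \to \infty$. Then for every $R > 0$, $|\varepsilon_n x + \varepsilon_n y_{\varepsilon_n}| \to \infty$ uniformly for $x \in B_R(0)$, so by the definitions of $V_\infty, Q_i^\infty, K^\infty$ the shifted potentials satisfy $V \ge V_\infty - \sigma$, $Q_i \le Q_i^\infty + \sigma$ and $K \le K^\infty + \sigma$ on $B_R(0)$ for $n$ large. I would rewrite $c_{\varepsilon_n}$ via the Nehari formula \eqref{bddlow} (after the change of variable $x \mapsto x + y_{\varepsilon_n}$) as a sum of non-negative terms, apply Fatou's lemma term by term on each $B_R(0)$, then let $R \to \infty$ and $\sigma \to 0$, using Lemma \ref{L22}(ii) for the Coulomb term and the sign conditions $(f_2)$, $(f_4)$, to obtain
\[
\liminf_{n\to\infty} c_{\varepsilon_n} \;\ge\; \max_{t \ge 0} I_\infty(tw) \;\ge\; c_\infty,
\]
where the last inequality is the mountain-pass characterization of $c_\infty$ analogous to \eqref{3eq}. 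Together with Lemma \ref{cric1} this gives $c_\infty \le c_0$, contradicting \eqref{14}. This is the main technical obstacle, because $V$, $Q_i$, $K$ only have one-sided $\liminf$/$\limsup$ behaviour at infinity rather than genuine pointwise limits, so the Fatou step must be organised carefully to respect the sign of each term in the Nehari decomposition.

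\emph{Step 2: identification of $x_0 \in \mathcal{G}$.} Given $\varepsilon_n y_{\varepsilon_n} \to x_0$, continuity of $V, Q_i, K$ (from $(f_1)$) yields $V(\varepsilon_n x + \varepsilon_n y_{\varepsilon_n}) \to V(x_0)$ uniformly on compact sets, and similarly for $Q_i, K$. Passing to the limit in the weak form of the shifted equation satisfied by $w_n$, using Lemma \ref{L22}(iii) for the nonlocal term and strong $L^q_{loc}$-convergence for the nonlinear ones, shows that $w$ is a non-trivial weak solution of \eqref{gs} at $s = x_0$. Hence $w \in \mathcal{N}^{x_0}$ and $I^{x_0}(w) \ge G(x_0) \ge c_0$. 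The reverse inequality again uses Fatou on the non-negative Nehari decomposition of $c_{\varepsilon_n}$ after the change of variable, where the coefficients now converge genuinely to $V(x_0), Q_i(x_0), K(x_0)$, giving
\[
\liminf_{n\to\infty} c_{\varepsilon_n} \;\ge\; I^{x_0}(w) - \tfrac{1}{q_{i_0}}\langle (I^{x_0})'(w), w\rangle \;=\; I^{x_0}(w).
\]
Combining with $\limsup_n c_{\varepsilon_n} \le c_0$ from Lemma \ref{cric1} sandwiches $c_0 \le G(x_0) \le I^{x_0}(w) \le c_0$, so $G(x_0) = c_0$, i.e.\ $x_0 \in \mathcal{G}$.
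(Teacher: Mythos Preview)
Your overall strategy---translate by $y_{\varepsilon_n}$, extract a nontrivial weak limit $w$, and compare energies through the non-negative Nehari decomposition $I-\tfrac{1}{q_{i_0}}\langle I',\cdot\rangle$ via Fatou---is exactly what the paper does. However, Step~1 as written has a genuine gap: Fatou applied to that decomposition yields at best
\[
\liminf_{n\to\infty} c_{\varepsilon_n}\;\ge\; I_\infty(w)-\tfrac{1}{q_{i_0}}\langle I_\infty'(w),w\rangle,
\]
and this expression equals $I_\infty(w)=\max_{t\ge 0}I_\infty(tw)$ \emph{only if} $w\in\mathcal{N}_\infty$. You do not establish that in Step~1. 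The paper fills this by first passing to the limit in the weak form of the translated equation (precisely the argument you carry out in Step~2) to conclude that $w$ is a nontrivial solution of the limiting problem, hence $w\in\mathcal{N}_\infty$ and $c_\infty\le I_\infty(w)$; only then does the Fatou chain close. You should insert the same ``pass to the limit in the equation'' step into Step~1.

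Your remark about the one-sided nature of $V_\infty=\liminf V$, $Q_i^\infty=\limsup Q_i$, $K^\infty=\limsup K$ is well taken; the paper's ``without loss of generality $V(\varepsilon_n y_n)\to V_\infty$'' etc.\ is not literally justified. The clean way to handle it is to pass to a subsequence with $V(\varepsilon_n y_n)\to V_*\ge V_\infty$, $Q_i(\varepsilon_n y_n)\to Q_i^*\le Q_i^\infty$, $K(\varepsilon_n y_n)\to K^*\le K^\infty$, show $w$ solves the autonomous problem with coefficients $(V_*,Q_i^*,K^*)$, and then use the comparison Lemma~\ref{L277} to obtain $c_\infty\le c_{V_*Q^*K^*}\le I_{V_*Q^*K^*}(w)$ before running Fatou. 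With these two adjustments your plan matches the paper's proof.
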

\begin{proof}
	For the sake of simplicity, we denote $y_n=y_{\varepsilon_n}$,  $u_n(x)=u_{\varepsilon_n }(x)$.
	Arguing by contradiction, suppose that there is a sequence $\varepsilon_n\to 0$ such that $ |\varepsilon_n y_n|\to\infty$, as $n\to\infty$.
	Set $\widetilde{u}_n(x)=u_n(x+y_n)$, where $\widetilde{u}_n(x)=\widetilde{u}_{\varepsilon_n}(x)$. It follows from Lemma \ref{L41} that 
	\begin{equation}\label{ineq41}
\int_{B_r (0)} |\widetilde{u}_n|^2 dx\geq\beta,\hspace{1ex}\text{for all}\hspace{1ex}n\in \mathbb{N}.
	\end{equation}
Then, $\widetilde{u}_n$ is a ground state solution of
\begin{equation*}
-\Delta\widetilde{u}_n+\phi_{\widetilde{u}_n} \widetilde{u}_n+V(\varepsilon_n x+\varepsilon_ny_n)\widetilde{u}_n=\sum\limits_{i=1}^{m} Q_i(\varepsilon_n x+\varepsilon_ny_n)|\widetilde{u}_n|^{q_i-2}\widetilde{u}_n+K(\varepsilon_n x+\varepsilon_ny_n)|\widetilde{u}_n|^{4}\widetilde{u}_n,
\end{equation*}
and $||\widetilde{u}_n||_{\varepsilon_n}=||u_n||_{\varepsilon_n}$ is bounded in $\R$. Moreover, up to a subsequence,  we may assume that $\widetilde{u}_n\rightharpoonup\widetilde{u}$ in $H_\varepsilon$ with $\widetilde{u}\neq0$ and $\widetilde{u}\geq0$ . It follows that
\begin{equation}\label{41}
\begin{aligned}
&\int_{ \R^3} \nabla \widetilde{u}_n \nabla \varphi dx +\int_{ \R^3} V(\varepsilon_n x+\varepsilon_ny_n)\widetilde{u}_n\varphi dx+\int_{ \R^3}\phi_{\widetilde{u}_n}\widetilde{u}_n\varphi dx \\
= &\sum\limits_{i=1}^{m} \int_{ \R^3}Q_i(\varepsilon_n x+\varepsilon_ny_n)|\widetilde{u}_n|^{q_i-1}\varphi dx + 
\int_{ \R^3}K(\varepsilon_n x+\varepsilon_ny_n)|\widetilde{u}_n|^{5}\varphi dx,
\end{aligned}
\end{equation}
for any $ \varphi\in H^1(\R^3)$. Without loss of generality, we may asuume that $V(\varepsilon_ny_n)\to V_\infty$ and $Q_i(\varepsilon_ny_n)\to Q_i^\infty$ $(1\le i\le m)$ and $K(\varepsilon_ny_n)\to K^\infty$, as $n\to\infty$. Under our assumptions on $V$, $K$, and $Q_i(1\le i\le m)$, we get that  $V$, $K$, and $Q_i(1\le i\le m)$ are uniformly continuous, which implies that 
\begin{equation}
V(\varepsilon_n x+\varepsilon_ny_n)\to V_\infty\quad\text{and}\quad K(\varepsilon_n x+\varepsilon_ny_n)\to K^\infty,
\end{equation}
and
\begin{equation}\label{43}
Q_i(\varepsilon_n x+\varepsilon_ny_n)\to Q_i^\infty(1\le i\le m),
\end{equation}
as $n\to\infty$ uniformly on bounded sets of $\R^3$.
Using the weak limit of  $\widetilde{u}_n$ and \eqref{41}--\eqref{43}, we get
\begin{equation*}
\begin{aligned}
&\int_{ \R^3} \nabla \widetilde{u} \nabla \varphi dx +\int_{ \R^3} V_\infty \widetilde{u}\varphi dx+\int_{ \R^3}\phi_{\widetilde{u}}\widetilde{u}\varphi dx \\
= &\sum\limits_{i=1}^{m} \int_{ \R^3}Q_i^\infty|\widetilde{u}|^{q_i-1}\varphi dx + 
\int_{ \R^3}K^\infty|\widetilde{u}|^{5}\varphi dx,
\end{aligned}
\end{equation*}
for any $ \varphi\in H^1(\R^3)$.
Which implies that $\widetilde{u}\in \mathcal{N}_\infty$, i.e., $\langle I_{\infty}'(\widetilde{u}),\widetilde{u} \rangle=0.$ We deduce  from Lemma \ref{cric1} and Fatou's Lemma that
\begin{equation}
\begin{aligned}
c_\infty\le& I_{\infty}(\widetilde{u})-\frac{1}{q_{i_0}}\langle I_{\infty}'(\widetilde{u}),\widetilde{u} \rangle\\
=& \left(\frac{1}{2}-\frac{1}{q_{i_0}}\right)\int_{ \R^3}\left(|\nabla \widetilde{u}|^2+V_\infty|\widetilde{u}|^{2}\right) dx +\left(\frac{1}{4}-\frac{1}{q_{i_0}}\right)\int_{ \R^3}\phi_{\widetilde{u}}\widetilde{u}^2 dx \\
&- \sum\limits_{i=1}^{i_0-1} \left(\frac{1}{q_i}-\frac{1}{q_{i_0}}\right)\int_{ \R^3}Q_i^\infty|\widetilde{u}|^{q_i}dx+\sum\limits_{i=i_0+1}^{m}\left (\frac{1}{q_{i_0}}-\frac{1}{q_i}\right)\int_{ \R^3}Q_i^\infty|\widetilde{u}|^{q_i}dx\\
&+\left({\frac{1}{q_{i_0}}-\frac{1}{6}}\right)\int_{ \R^3} K^\infty|\widetilde{u}|^{6}dx\\
\le&\liminf\limits_{n\to \infty }  \left(\frac{1}{2}-\frac{1}{q_{i_0}}\right)\int_{ \R^3}\left(|\nabla \widetilde{u}_n|^2+V(\varepsilon_n x+\varepsilon_ny_n)|\widetilde{u}_n|^{2}\right) dx +\left(\frac{1}{4}-\frac{1}{q_{i_0}}\right)\int_{ \R^3}\phi_{\widetilde{u}_n}\widetilde{u}_n^2 dx \\
&- \sum\limits_{i=1}^{i_0-1} \left(\frac{1}{q_i}-\frac{1}{q_{i_0}}\right)\int_{ \R^3}Q_i(\varepsilon_n x+\varepsilon_ny_n)|\widetilde{u}_n|^{q_i}dx\\
&+\sum\limits_{i=i_0+1}^{m}\left (\frac{1}{q_{i_0}}-\frac{1}{q_i}\right)\int_{ \R^3}Q_i(\varepsilon_n x+\varepsilon_ny_n)|\widetilde{u}_n|^{q_i}dx\\
&+\left({\frac{1}{q_{i_0}}-\frac{1}{6}}\right)\int_{ \R^3} K(\varepsilon_n x+\varepsilon_ny_n)|\widetilde{u}_n|^{6}dx\\
\le&\limsup\limits_{n\to \infty } \left(I_{\varepsilon_n}(\widetilde{u}_n)-\frac{1}{q_{i_0}}\langle I_{\varepsilon_n}'(\widetilde{u}_n),\widetilde{u}_n \rangle\right)\\
=&\limsup\limits_{n\to \infty }c_{\varepsilon_n}
\le c_0,
\end{aligned}
\end{equation}
which contradicts the fact that $c_0>c_\infty$. Hence, $\{\varepsilon y_\varepsilon\}$ is bounded as $\varepsilon\to0$. Note that if $\varepsilon_n y_n\to x_0$ under a choice of subsequence, and $V$, $Q_i(1\le i\le m ) $, $K$ are uniformly continuous, we have 
\begin{equation}\label{46}
V(\varepsilon_n x+\varepsilon_ny_n)\to V(x_0)\quad\text{and}\quad K(\varepsilon_n x+\varepsilon_ny_n)\to K(x_0),
\end{equation}
and
\begin{equation}\label{477}
 Q_i(\varepsilon_n x+\varepsilon_ny_n)\to Q_i(x_0)(1\le i\le m),
\end{equation}
as $n\to\infty$ uniformly on bounded sets of $\R^3$. Taking $\varphi=\widetilde{u}$ and limit as $n\to\infty$ in the above Eq. \eqref{41}, we get 
\begin{equation*}
\begin{aligned}
||\widetilde{u}||^2_{\mathcal{D}^{1,2}} +\int_{ \R^3} V(x_0)|\widetilde{u}^{2}dx+\int_{ \R^3}\phi_{\widetilde{u}}\widetilde{u}^2 dx 
= \sum\limits_{i=1}^{m} \int_{ \R^3}Q_i(x_0)|\widetilde{u}|^{q_i}dx + 
\int_{ \R^3} K(x_0)|\widetilde{u}|^{6}dx,
\end{aligned}
\end{equation*}
which implies that $\widetilde{u}\in \mathcal{N}^{x_0}$, i.e., $\langle (I^{x_0})'(\widetilde{u}),\widetilde{u} \rangle=0.$
We deduce  from Lemma \ref{cric1} and Fatou's Lemma that
\begin{equation}\label{47}
\begin{aligned}
c_0=& \inf\limits_{s\in\R^3}G(s)\le G(x_0)\\
 \le& I^{x_0}(\widetilde{u} )-\frac{1}{q_{i_0}}\langle (I^{x_0})'(\widetilde{u}),\widetilde{u} \rangle\\
=& \left(\frac{1}{2}-\frac{1}{q_{i_0}}\right)\int_{ \R^3}\left(|\nabla \widetilde{u}|^2+V(x_0)|\widetilde{u}|^{2}\right) dx +\left(\frac{1}{4}-\frac{1}{q_{i_0}}\right)\int_{ \R^3}\phi_{\widetilde{u}}\widetilde{u}^2 dx \\
&- \sum\limits_{i=1}^{i_0-1} \left(\frac{1}{q_i}-\frac{1}{q_{i_0}}\right)\int_{ \R^3}Q_i(x_0)|\widetilde{u}|^{q_i}dx+\sum\limits_{i=i_0+1}^{m}\left (\frac{1}{q_{i_0}}-\frac{1}{q_i}\right)\int_{ \R^3}Q_i(x_0)|\widetilde{u}|^{q_i}dx\\
&+\left({\frac{1}{q_{i_0}}-\frac{1}{6}}\right)\int_{ \R^3} K(x_0)|\widetilde{u}|^{6}dx\\
\le&\liminf\limits_{n\to \infty }  \left(\frac{1}{2}-\frac{1}{q_{i_0}}\right)\int_{ \R^3}\left(|\nabla \widetilde{u}_n|^2+V(\varepsilon_n x+\varepsilon_ny_n)|\widetilde{u}_n|^{2}\right) dx +\left(\frac{1}{4}-\frac{1}{q_{i_0}}\right)\int_{ \R^3}\phi_{\widetilde{u}_n}\widetilde{u}_n^2 dx \\
&- \sum\limits_{i=1}^{i_0-1} \left(\frac{1}{q_i}-\frac{1}{q_{i_0}}\right)\int_{ \R^3}Q_i(\varepsilon_n x+\varepsilon_ny_n)|\widetilde{u}_n|^{q_i}dx\\
&+\sum\limits_{i=i_0+1}^{m}\left (\frac{1}{q_{i_0}}-\frac{1}{q_i}\right)\int_{ \R^3}Q_i(\varepsilon_n x+\varepsilon_ny_n)|\widetilde{u}_n|^{q_i}dx\\
&+\left({\frac{1}{q_{i_0}}-\frac{1}{6}}\right)\int_{ \R^3} K(\varepsilon_n x+\varepsilon_ny_n)|\widetilde{u}_n|^{6}dx\\
\le&\limsup\limits_{n\to \infty } \left(I_{\varepsilon_n}(\widetilde{u}_n)-\frac{1}{q_{i_0}}\langle I_{\varepsilon_n}'(\widetilde{u}_n),\widetilde{u}_n \rangle\right)\\
=&\limsup\limits_{n\to \infty }c_{\varepsilon_n}\\
\le&c_0,
\end{aligned}
\end{equation}
it follows that $G(x_0)=\inf\limits_{s\in \R^3}G(s)$. The proof is complete.
\end{proof}
\begin{lemma}\label{L43}
$\widetilde{u}_n \to \widetilde{u}$ in $H_\varepsilon$ as $n\to\infty$. Furthermore, there exist $C>0$ and $\varepsilon_*>0$ such that $|\widetilde{u}_n|_\infty\le C $ and 
$$ \lim\limits_{|x|\to\infty}\widetilde{u}_{\varepsilon}(x)=0 \hspace{1ex}\text{uniformly on} \hspace{1ex}\varepsilon\in (0,\varepsilon_*).$$
\end{lemma}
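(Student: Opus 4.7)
The plan is to harvest strong convergence in $H_\varepsilon$ as a direct by-product of the chain of inequalities already used in the proof of Lemma \ref{L42}, and then bootstrap to obtain the uniform $L^\infty$-bound and exponential decay.

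Step 1 (Strong convergence). The outermost quantities in the chain of Lemma \ref{L42} coincide ($c_0\le\cdots\le c_0$), so every intermediate inequality is in fact an equality. Each summand in the expansion of $I^{x_0}(\widetilde u)-\frac{1}{q_{i_0}}\langle (I^{x_0})'(\widetilde u),\widetilde u\rangle$ is non-negative, thanks to the sign condition $(f_2)$ on the $Q_i$ and the positivity of $K$ in $(f_4)$, and by weak lower semicontinuity of the gradient together with Fatou's lemma each is bounded above by the corresponding liminf of its analogue for $\widetilde u_n$. Equality of the totals forces equality in every summand. In particular, writing $V_n(x):=V(\varepsilon_n x+\varepsilon_n y_n)$,
\[\int_{\R^3}|\nabla\widetilde u_n|^2\,dx\to \int_{\R^3}|\nabla\widetilde u|^2\,dx,\qquad\int_{\R^3}V_n\widetilde u_n^2\,dx\to\int_{\R^3}V(x_0)\widetilde u^2\,dx.\]
Combined with the weak $L^2$-convergence of $\nabla\widetilde u_n$, the first identity yields $\nabla\widetilde u_n\to \nabla\widetilde u$ strongly in $L^2$. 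For the remaining component I would expand
\[\int_{\R^3}V_n|\widetilde u_n-\widetilde u|^2\,dx=\int V_n\widetilde u_n^2-2\int V_n\widetilde u_n\widetilde u+\int V_n\widetilde u^2\]
and verify that each of the three terms tends to $\int V(x_0)\widetilde u^2$, using the identity above, the weak $L^2$-convergence $V_n\widetilde u_n\rightharpoonup V(x_0)\widetilde u$ (justified by splitting between $B_R$ and $B_R^c$ and using the uniform continuity of $V$ on compacta), and dominated convergence, respectively. Since $V_n\ge V_0>0$, this delivers $\widetilde u_n\to \widetilde u$ in $L^2$ and hence strongly in $H_\varepsilon$.

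Step 2 (Uniform $L^\infty$-bound). I would apply a Br\'ezis--Kato / Moser iteration to the translated equation for $\widetilde u_n$, rewriting the critical term as $a_n(x)\widetilde u_n$ with $a_n:=K_n|\widetilde u_n|^4$. The strong $L^6$-convergence established in Step 1 gives that $\{a_n\}$ is uniformly bounded in $L^{3/2}$ with uniform equi-integrability, which is exactly what is needed to start the iteration at the critical threshold; the iteration yields $|\widetilde u_n|_\infty\le C$ with $C$ independent of $n$.

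Step 3 (Uniform decay). Interpolating strong $H^1$-convergence against the uniform $L^\infty$-bound gives $\widetilde u_n\to\widetilde u$ in every $L^p$ with $p\in[2,\infty)$; combined with the Riesz representation of $\phi_{\widetilde u_n}$ (which forces $\phi_{\widetilde u_n}(x)\to 0$ as $|x|\to\infty$ uniformly in $n$), this implies that one can find $R_0$ independent of $n$ with
\[\sum_{i=1}^m|Q_i|_\infty|\widetilde u_n(x)|^{q_i-2}+|K|_\infty|\widetilde u_n(x)|^4+\phi_{\widetilde u_n}(x)\le \frac{V_0}{2}\quad\text{on }B_{R_0}^c.\]
The translated equation then yields $-\Delta\widetilde u_n+(V_0/2)\widetilde u_n\le 0$ outside $B_{R_0}$, and a standard comparison against the supersolution $C\exp(-\mu|x|)$ gives $\widetilde u_\varepsilon(x)\to 0$ as $|x|\to\infty$ uniformly in $\varepsilon\in(0,\varepsilon_*)$. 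The principal obstacle lies in Step 1: convergence of a weighted $L^2$-norm whose weight depends on $n$ does not automatically upgrade to strong $L^2$-convergence of $\widetilde u_n$, so the three-term expansion together with the accompanying tightness and weak-convergence bookkeeping is the most delicate portion of the argument.
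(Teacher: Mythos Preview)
Your Step~1 is correct but takes a different route from the paper. The paper obtains strong convergence via a Br\'ezis--Lieb splitting: it shows $I_{\varepsilon_n}(\widetilde u_n-\widetilde u)=I_{\varepsilon_n}(\widetilde u_n)-I^{x_0}(\widetilde u)+o_n(1)\to 0$ and likewise for the derivative, then reads off $\|\widetilde u_n-\widetilde u\|_\varepsilon\to 0$ from the coercivity estimate $(1/2-1/q_{i_0})\|\cdot\|_\varepsilon^2\le I_{\varepsilon_n}(\cdot)-\frac{1}{q_{i_0}}\langle I'_{\varepsilon_n}(\cdot),\cdot\rangle$. Your approach---forcing equality in the Fatou/WLSC chain of Lemma~\ref{L42} and extracting convergence of each piece---is equally valid and arguably more transparent. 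One correction: the summand $(1/4-1/q_{i_0})\int\phi_{\widetilde u_n}\widetilde u_n^2$ carries a \emph{negative} coefficient (since $q_{i_0}>4$), so your ``each summand is non-negative'' claim fails there; but Lemma~\ref{L22}(ii) already gives $\int\phi_{\widetilde u_n}\widetilde u_n^2\to\int\phi_{\widetilde u}\widetilde u^2$ under weak convergence, so this term converges outright and the rest of your splitting goes through.

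Step~3, however, is circular. To set up the differential inequality $-\Delta\widetilde u_n+(V_0/2)\widetilde u_n\le 0$ on $B_{R_0}^c$ you need the pointwise quantities $|\widetilde u_n(x)|^{q_i-2}$ and $|\widetilde u_n(x)|^4$ to be uniformly small there---i.e.\ you need $\widetilde u_n(x)\to 0$ as $|x|\to\infty$ uniformly in $n$, which is precisely the conclusion you are trying to prove. Strong $L^p$-convergence for all $p<\infty$ together with a uniform $L^\infty$-bound does not rule out a travelling bump of fixed height and shrinking support, so cannot supply this. The paper avoids the trap: after Br\'ezis--Kato, it invokes the local subsolution estimate (Trudinger / De Giorgi--Nash--Moser)
\[
\sup_{B_r(y)}\widetilde u_n\;\le\; C\Bigl(|\widetilde u_n|_{L^2(B_{2r}(y))}+|g_n|_{L^{s/2}(B_{2r}(y))}\Bigr),\qquad y\in\R^3,
\]
with $C$ independent of $n$ and $y$. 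The strong $H^1$-convergence $\widetilde u_n\to\widetilde u$ then makes the right-hand side small uniformly in $n$ once $|y|$ is large, yielding the uniform decay directly. Your comparison-principle argument is in fact the content of the \emph{next} lemma (Lemma~\ref{L44}, exponential decay), which becomes available only after the uniform decay of Lemma~\ref{L43} has been established.
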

\begin{proof}
Due to the Brezis--Lieb lemma and \eqref{46}--\eqref{477}, we get
$$I_{\varepsilon_n}(\widetilde{u}_n - \widetilde{u}_)=I_{\varepsilon_n}(\widetilde{u}_n)-I^{x_0}(\widetilde{u})+o_n(1).$$
Since $\varepsilon_n y_{\varepsilon_n}\to x_0$ and \eqref{47}, we have 
$$\lim\limits_{n\to\infty}I_{\varepsilon_n}(\widetilde{u}_n - \widetilde{u})=0. $$ 
Similarly, we also get 
$$\lim\limits_{n\to\infty}\langle I_{\varepsilon_n}'(\widetilde{u}_n - \widetilde{u}),\widetilde{u}_n - \widetilde{u} \rangle=0.$$
Consequently,
$$\left(\frac{1}{2}-\frac{1}{q_{i_0}}\right)||\widetilde{u}_n - \widetilde{u}||^2 \le\lim\limits_{n\to \infty } \left(I_{\varepsilon_n}(\widetilde{u}_n- \widetilde{u})-\frac{1}{q_{i_0}}\langle I_{\varepsilon_n}'(\widetilde{u}_n- \widetilde{u}),\widetilde{u}_n- \widetilde{u} \rangle\right)=0,$$
which implies that $\widetilde{u}_n \to \widetilde{u}$ in $H_\varepsilon$ as $n\to\infty$.
 From Lemma \ref{L42}, we know that the sequence $\widetilde{u}_n $ satisfies
\begin{equation}\label{49}
-\Delta\widetilde{u}_n+\left(\phi_{\widetilde{u}_n}+V(\varepsilon_n x+\varepsilon_ny_n)-K(\varepsilon_n x+\varepsilon_ny_n)|\widetilde{u}_n|^{4}\right) \widetilde{u}_n=\sum\limits_{i=1}^{m} Q_i(\varepsilon_n x+\varepsilon_ny_n)|\widetilde{u}_n|^{q_i-2}\widetilde{u}_n.
\end{equation}
By Lemma \ref{L21} (iv) and \eqref{ineq41}, we obtain that $0<\phi_{\widetilde{u}_n} <C$, then $\phi_{\widetilde{u}_n}+V(\varepsilon_n x+\varepsilon_ny_n)\in L^{\infty}_{loc}(\R^3)$. Moreover, since $K(\varepsilon_n x+\varepsilon_ny_n)|\widetilde{u}_n|^{4}\in L^\frac{3}{2}(\R^3)$ and $4<q_1<q_2<\cdots<q_m<6$, using a result in \cite[Proposition 3.3]{hezou} or \cite{BK}, we have  $\widetilde{u}_n\in L^{t}(\R^3)$ for all $t\geq 2$. Furthermore, $\widetilde{u}_n $ satisfies
\begin{equation}
\begin{aligned}
-\Delta\widetilde{u}_n&\le
-\Delta\widetilde{u}_n+\left(\phi_{\widetilde{u}_n}+V(\varepsilon_n x+\varepsilon_ny_n)\right) \widetilde{u}_n\\
&=g_n(x):=\sum\limits_{i=1}^{m} Q_i(\varepsilon_n x+\varepsilon_ny_n)|\widetilde{u}_n|^{q_i-2}\widetilde{u}_n+K(\varepsilon_n x+\varepsilon_ny_n)|\widetilde{u}_n|^{4} \widetilde{u}_n,
\end{aligned}
\end{equation}
where $g_n(x)\in L^{\frac{s}{2}}(\R^3)$, for some $s>3$. Applying a result of \cite{NS} or \cite[Proposition 3.4]{hezou}, we can obtain that
$$\sup\limits_{x\in B_r(y)}\widetilde{u}_n(x)\le C\left(|\widetilde{u}_n|_{L^2(B_{2r}(y))}+|\widetilde{u}_n|_{L^{\frac{s}{2}}(B_{2r}(y))}\right),\hspace{1ex}\text{for any}\hspace{1ex} y\in \R^3,$$
which implies that $|\widetilde{u}_n|_{\infty}\le C$ and 
$$ \lim\limits_{|x|\to\infty}\widetilde{u}_n(x)=0 \hspace{1ex}\text{uniformly on} \hspace{1ex}n\in \mathbb{N}.$$
Consequently, there exists $\varepsilon_*>0$ such that
\begin{equation}\label{411}
\lim\limits_{|x|\to\infty}\widetilde{u}_{\varepsilon}(x)=0 \hspace{1ex}\text{uniformly on} \hspace{1ex}\varepsilon\in (0,\varepsilon_*).
\end{equation}
The proof is complete.
\end{proof}
In order to see the exponential decay of solutions $u_\varepsilon$, it is enough to show the following result about  $\widetilde{u}_\varepsilon$.
\begin{lemma}\label{L44}
There exist constants $C>0$ and $\mu >0$ such that
$$\widetilde{u}_\varepsilon\le C e^{-\mu|x|} \hspace{1ex}\text{for all}\hspace{1ex}x\in\R^3.$$
\end{lemma}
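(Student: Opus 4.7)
The strategy is a standard comparison argument: we use the equation satisfied by $\widetilde{u}_\varepsilon$ to show that, outside a sufficiently large ball, $\widetilde{u}_\varepsilon$ is a subsolution of a linear Schr\"odinger-type operator with strictly positive potential, and we majorize it by an explicit exponential supersolution.

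First, I would rewrite \eqref{49} in the form
\begin{equation*}
-\Delta \widetilde{u}_n + W_n(x)\widetilde{u}_n = 0,
\end{equation*}
where
\begin{equation*}
W_n(x) := \phi_{\widetilde{u}_n}(x) + V(\varepsilon_n x + \varepsilon_n y_n) - K(\varepsilon_n x + \varepsilon_n y_n)|\widetilde{u}_n|^{4} - \sum_{i=1}^{m} Q_i(\varepsilon_n x + \varepsilon_n y_n)|\widetilde{u}_n|^{q_i-2}.
\end{equation*}
Since $\phi_{\widetilde{u}_n}\geq 0$, $V(\cdot)\geq V_0>0$ by $(f_3)$, and the potentials $K$ and $Q_i$ are bounded by $(f_3)$--$(f_4)$, the uniform decay $\lim_{|x|\to\infty}\widetilde{u}_\varepsilon(x)=0$ from \eqref{411} ensures that there exists $R>0$, independent of $\varepsilon\in(0,\varepsilon_*)$, such that
\begin{equation*}
W_n(x)\ \geq\ \frac{V_0}{2} \qquad \text{for all } |x|\geq R.
\end{equation*}
Hence $\widetilde{u}_\varepsilon$ satisfies $-\Delta \widetilde{u}_\varepsilon + \tfrac{V_0}{2}\widetilde{u}_\varepsilon \leq 0$ pointwise on $\R^3\setminus B_R(0)$.

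Next, I would choose $0<\mu<\sqrt{V_0/2}$ and consider the radial function $w(x):=M e^{-\mu|x|}$. A direct computation gives, for $x\neq 0$,
\begin{equation*}
-\Delta w(x) + \frac{V_0}{2} w(x) = \left(\frac{V_0}{2}-\mu^2\right) M e^{-\mu|x|} + \frac{2\mu}{|x|} M e^{-\mu|x|} > 0,
\end{equation*}
so $w$ is a classical supersolution of the same operator on $\R^3\setminus\{0\}$. Using the uniform $L^\infty$ bound $|\widetilde{u}_\varepsilon|_\infty\leq C$ from Lemma \ref{L43}, I choose $M$ large (independently of $\varepsilon$) so that $w\geq \widetilde{u}_\varepsilon$ on $\partial B_R(0)$. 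Since both $w$ and $\widetilde{u}_\varepsilon$ tend to $0$ at infinity, the comparison principle applied to the function $z_\varepsilon := \widetilde{u}_\varepsilon - w$ on the exterior domain $\R^3\setminus \overline{B_R(0)}$ yields $\widetilde{u}_\varepsilon(x)\leq w(x)$ for all $|x|\geq R$. On the bounded region $|x|\leq R$, the desired exponential estimate is trivial from the $L^\infty$ bound by enlarging the constant $C$. Combining both regions delivers the conclusion $\widetilde{u}_\varepsilon(x)\leq Ce^{-\mu|x|}$ on all of $\R^3$.

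The main technical point to verify carefully is the uniformity of the radius $R$ and the constant $M$ with respect to $\varepsilon\in(0,\varepsilon_*)$; this rests entirely on the uniform decay \eqref{411} and the uniform $L^\infty$ bound, both already in place from Lemma \ref{L43}. Everything else is the standard exponential-barrier comparison, and no further PDE machinery is needed.
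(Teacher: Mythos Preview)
Your proof is correct and follows essentially the same exponential-barrier comparison as the paper: use the uniform decay from \eqref{411} to absorb the nonlinear terms into a fraction of the potential outside a fixed ball, then compare with $Ce^{-\mu|x|}$ via the maximum principle. If anything, your version is slightly tidier, since you work directly with the variable-coefficient equation and the global bound $V\ge V_0$ (the paper somewhat informally substitutes $V(x_0)$, $Q_i(x_0)$, $K(x_0)$) and you make the uniformity of $R$ and $M$ in $\varepsilon$ explicit.
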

\begin{proof}
We borrow an idea from \cite[Lemma 3.11]{hezou}. It follows from \eqref{411} that there exists $R>0$ such that  
\begin{equation}
\sum\limits_{i=1}^{m} Q_i(x_0)|\widetilde{u}_\varepsilon|^{q_i-2}+K(x_0)|\widetilde{u}_\varepsilon|^{4}\le \frac{V(x_0)}{2}\hspace{1ex}\text{for all} \hspace{1ex} |x|> R\hspace{1ex}\text{and}\hspace{1ex}\forall\varepsilon\in(0,\varepsilon_*).
\end{equation}
Set $\tau (x) =C e^{-\mu|x|}$ such that $\mu^2<\frac{V(x_0)}{2}$ and $\tau(R)\geq \widetilde{u}_\varepsilon$, we can obtain that
\begin{equation}\label{tau}
\nabla\tau(x)=\mu^2\tau(x).
\end{equation}
By Lemma \ref{L21} (iv) and \eqref{ineq41}, we can get 
\begin{equation}\label{tau1}
\begin{aligned}
-\Delta\widetilde{u}_\varepsilon+V(x_0)\widetilde{u}_\varepsilon
&<-\Delta\widetilde{u}_\varepsilon+V(x_0)\widetilde{u}_\varepsilon+\phi_{\widetilde{u}_\varepsilon}\widetilde{u}_\varepsilon\\
&=\sum\limits_{i=1}^{m} Q_i(x_0)|\widetilde{u}_\varepsilon|^{q_i-2}\widetilde{u}_\varepsilon+K(x_0)|\widetilde{u}_\varepsilon|^{4}\widetilde{u}_\varepsilon\\
&\le \frac{V(x_0)}{2}\widetilde{u}_\varepsilon \hspace{1ex}\text{for} \hspace{1ex} |x|> R.
\end{aligned}
\end{equation}
Let $\tau_\varepsilon=\tau-\widetilde{u}_\varepsilon$, combining \eqref{tau} and \eqref{tau1}, we have
\begin{equation}
\begin{cases}
-\Delta\tau_\varepsilon +\frac{V(x_0)}{2}\tau_\varepsilon>0,& |x|>R,\\
\tau_\varepsilon\geq0,&|x|=R,\\
\lim\limits_{|x|\to\infty}\tau_\varepsilon(x)=0.
\end{cases}
\end{equation}
The strong maximum principle implies that $\tau_\varepsilon\geq0$ in $|x|\geq R$. It follows that $$\widetilde{u}_\varepsilon\le C e^{-\mu|x|}\hspace{1ex}\text{for all} \hspace{1ex} |x|\geq R\hspace{1ex}\text{and}\hspace{1ex}\forall\varepsilon\in(0,\varepsilon_*).$$ The proof is complete.
\end{proof}

 {\section{Proof of Theorem \ref{the1}--\ref{the2}}\label{sec5}}
 \setcounter{equation}{0}
 \vskip2mm
 In this section, we will prove the existence, concentration and exponential decay of $v_\varepsilon$ in Theorem \ref{the1}--\ref{the2}.
  \vskip4mm
 {\subsection{Proof of Theorem \ref{the1}}\label{subsec51}}
   \vskip2mm
By Lemma \ref{L39}, $u_\varepsilon$ is a positive ground state solution of \eqref{22}. Then, $v_\varepsilon(x)=u_\varepsilon(\frac{x}{\varepsilon})$ is a positive ground state solution of \eqref{1}. Suppose $z_n$ denotes a maximum point of $\widetilde{u}_n$, it follows from Lemma \ref{L43} that $z_n$ is a bounded sequence in $\R^3$. Then, there exists $R>0$, such that $z_n\in B_R(0)$. Thus, the global maximum point of $u_n$ is $z_n+y_n$. 
Using the boundedness of $\{z_n\}$ and Lemma \ref{L42}, we have
$$\lim\limits_{n\to \infty }\varepsilon_n(z_n+y_n)=x_0.$$ 
Then, note the relations of  $\widetilde{u}_\varepsilon$, $u_\varepsilon$ and $v_\varepsilon$, we have
\begin{equation}\label{51}
v_\varepsilon(\varepsilon x+\varepsilon z_\varepsilon +\varepsilon y_\varepsilon)=u_\varepsilon( x+ z_\varepsilon + y_\varepsilon)=\widetilde{u}_\varepsilon( x+ z_\varepsilon).
\end{equation}
Set $\eta_\varepsilon(x)=v_\varepsilon(\varepsilon x+x_\varepsilon)$, where $x_\varepsilon\to x_0,$ as $\varepsilon\to0$.  It follows from Lemma \ref{L41} and \eqref{51} that $\eta\neq0$.  By the similar arguments to Lemma \ref{L42} and Lemma \ref{L43}, we can obtain that $\eta_\varepsilon\to \eta $ in $H_\varepsilon $ and $\eta$ is a ground state solution of
\begin{equation*}
\begin{aligned}
-\Delta u+\phi_u u+V(x_0)u=\sum\limits_{i=1}^{m} Q_i(x_0)|u|^{q_i-2}u+K(x_0)|u|^{4}u,\hspace{1ex}x\in\R^3.
\end{aligned}
\end{equation*} 
From Lemma \ref{L44},  we have
$$v_\varepsilon(x)=u_\varepsilon(\frac{x}{\varepsilon})=\widetilde{u}_\varepsilon(\frac{x-\varepsilon y_\varepsilon}{\varepsilon})\le C exp\left({-\frac{\mu}{\varepsilon}|x-x_\varepsilon|}\right), $$
for all $x\in\R^3.$ The proof is complete. \hspace{9.15cm}$\Box$
  \vskip4mm
 {\subsection{Proof of Theorem \ref{the2}}}
 \vskip2mm
 In this subsection, we will consider that the electronic potential $h(x)$ satisfies $(H_1)$.
 By the similar strategy to Lemma \ref{L39}, we can obtain that problem \eqref{1} has a positive ground state solution, still denote by $v_\varepsilon$. Then, the following energy functional 
 \begin{equation}
 \begin{aligned}
  \mathcal{I}_\varepsilon(u)=&\frac{1}{2}||u||^2_\varepsilon +\frac{1}{4}\int_{ \R^3}\left[\frac{1}{|x|}*h(\varepsilon x)u^2\right]h(\varepsilon x)u^2\rm{d} x\\
   &- \sum\limits_{i=1}^{m} \frac{1}{q_i}\int_{ \R^3}Q_i (\varepsilon x)|u|^{q_i}dx-\frac{1}{6}\int_{ \R^3} K(\varepsilon x)|u|^{6}dx,
 \end{aligned}
 \end{equation} 
 has a critical point $u_\varepsilon$. Next, we want to show the following  claims.
 \vskip2mm
 \par\noindent
 \begin{claim}\
 Suppose that $\varepsilon_n y_n\to \infty$, as $n\to\infty$, then 
 $$\int_{R^3}\left[\frac{1}{|x|}*h(\varepsilon_n x+\varepsilon_ny_n)\widetilde{u}_n^2\right]h(\varepsilon_n x+\varepsilon_ny_n)\widetilde{u}_n\varphi dx\to0,$$
 for any $ \varphi\in H^1(\R^3)$, as $n\to\infty.$
 \end{claim}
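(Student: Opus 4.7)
Write $h_n(x):=h(\varepsilon_n x+\varepsilon_n y_n)$ and $\Phi_n(x):=\int_{\R^3}\frac{h_n(y)\widetilde u_n^2(y)}{|x-y|}\,dy$, so the integral in question is $I_n:=\int_{\R^3}\Phi_n\,h_n\,\widetilde u_n\,\varphi\,dx$. The first step is to obtain a uniform $L^6$ bound on $\Phi_n$. Since $-\Delta\Phi_n=4\pi h_n\widetilde u_n^2$, the same Lax--Milgram/Sobolev computation as in Lemma \ref{L21}(iv) gives $\|\Phi_n\|_{\mathcal{D}^{1,2}}\le C|h_n\widetilde u_n^2|_{6/5}\le C\|h\|_\infty|\widetilde u_n|_{12/5}^2$, which is bounded because $\widetilde u_n$ is bounded in $H^1(\R^3)$. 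Hence $|\Phi_n|_6\le C$ and, by H\"older's inequality, $|I_n|\le|\Phi_n|_6\,|h_n\widetilde u_n\varphi|_{6/5}\le C|h_n\widetilde u_n\varphi|_{6/5}$.

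The task therefore reduces to showing $|h_n\widetilde u_n\varphi|_{6/5}\to 0$. Given $\sigma>0$, first pick $R>0$ so large that $|\varphi|_{12/5,B_R^c(0)}<\sigma$; this is possible because $\varphi\in H^1(\R^3)\hookrightarrow L^{12/5}(\R^3)$. Using $\|h_n\|_\infty\le\|h\|_\infty$ and the Cauchy--Schwarz inequality together with the uniform bound on $|\widetilde u_n|_{12/5}$,
\begin{equation*}
\int_{B_R^c(0)}|h_n\widetilde u_n\varphi|^{6/5}dx\le\|h\|_\infty^{6/5}\,|\widetilde u_n|_{12/5}^{6/5}\,|\varphi|_{12/5,B_R^c(0)}^{6/5}\le C\sigma^{6/5}.
\end{equation*}

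For the integral over $B_R(0)$, I use assumption $(H_1)$. Because $\varepsilon_n\to 0$ (so $\varepsilon_n R\to 0$) and $|\varepsilon_n y_n|\to\infty$, one has $|\varepsilon_n x+\varepsilon_n y_n|\ge|\varepsilon_n y_n|-\varepsilon_n R\to\infty$ uniformly for $x\in B_R(0)$; since $h(z)\to 0$ as $|z|\to\infty$, this yields $\sup_{x\in B_R(0)}h_n(x)\to 0$ as $n\to\infty$. Therefore
\begin{equation*}
\int_{B_R(0)}|h_n\widetilde u_n\varphi|^{6/5}dx\le\bigl(\sup_{B_R(0)}h_n\bigr)^{6/5}|\widetilde u_n|_{12/5}^{6/5}\,|\varphi|_{12/5}^{6/5}=o_n(1).
\end{equation*}
Combining the two estimates gives $|h_n\widetilde u_n\varphi|_{6/5}\le C\sigma+o_n(1)$, and since $\sigma>0$ is arbitrary, $I_n\to 0$.

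The only place that requires care is the uniform-on-compacts vanishing of $h_n$ on $B_R(0)$; everything else is a standard Hardy--Littlewood--Sobolev/H\"older splitting. I expect no serious obstacle beyond being explicit about the two-parameter ($R$ then $n$) choice and keeping track of the $L^{12/5}$-tail of $\varphi$.
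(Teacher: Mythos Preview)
Your proof is correct. It differs from the paper's argument in two respects. First, the paper establishes a \emph{pointwise} $L^\infty$ bound on the Newtonian potential $\Phi_n$ by splitting the convolution into the regions $|x-y|\le 1$ and $|x-y|>1$; you instead bound $\Phi_n$ only in $L^6$ via the $\mathcal D^{1,2}$ estimate of Lemma~\ref{L21}(iv), which is shorter and uses nothing beyond what has already been developed. Second, to kill the remaining factor, the paper argues that $h_n\widetilde u_n\to 0$ a.e.\ together with an $L^2$ bound yields weak $L^2$ convergence to $0$, and then pairs with $\varphi$; you instead prove directly that $|h_n\widetilde u_n\varphi|_{6/5}\to 0$ by a tail/core splitting, exploiting the decay of $\varphi$ in $L^{12/5}$ outside $B_R(0)$ and the uniform vanishing of $h_n$ on $B_R(0)$. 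Your route is slightly more elementary (it sidesteps the a.e.-plus-boundedness-implies-weak-convergence step) and makes the dependence on $(H_1)$ completely explicit; the paper's $L^\infty$ bound on $\Phi_n$, on the other hand, is a stronger intermediate result that could be reused elsewhere. Both arguments use in an essential way that $\varepsilon_n\to 0$ (or at least stays bounded) so that $|\varepsilon_n x+\varepsilon_n y_n|\to\infty$ uniformly for $x$ in any fixed ball.
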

  \begin{proof}
  	From assumption $(H_1),$ we know that $h(x)$ is bounded in $\R^3$.
  	Since $\{\widetilde{u}_n\}$ is bounded in $H_\varepsilon$, by H\"{o}lder inequality, we have
  	\begin{equation}\label{53}
  	\begin{aligned}
\left|\frac{1}{|x|}*h(\varepsilon_n x+\varepsilon_ny_n)\widetilde{u}_n^2\right|  
 & =\left|\int_{\R^3}h(\varepsilon_n y+\varepsilon_ny_n)\frac{\widetilde{u}_n^{2}(y)}{|x-y|}dy\right|\\
  &\le C_1\int_{\R^3}\frac{\widetilde{u}_n^{2}(y)}{|x-y|}dy\\
  &=C_1\int_{|x-y|\le1}\frac{\widetilde{u}_n^{2}(y)}{|x-y|}dy+C_1\int_{|x-y|\geq1}\frac{\widetilde{u}_n^{2}(y)}{|x-y|}dy.\\
  &\le C_1\int_{|x-y|\le1}\frac{\widetilde{u}_n^{2}(y)}{|x-y|}dy+C_2\\
  &\le C_1\left(\int_{|x-y|\le1}\widetilde{u}_n^{4}(y)dy\right)^{\frac{1}{2}}
 \left(\int_{|x-y|\le1}\frac{1}{|x-y|^2}dy\right)^{\frac{1}{2}}+C_2\\
 &\le C_3 \left(\int_{|r|\le1}dr\right)^{\frac{1}{2}}+C_2\\
 &\le C.
  	\end{aligned}
  	\end{equation}
 Since  $\varepsilon_n y_n\to \infty$, as $n\to\infty$, it follows from $(H_1)$ that $h(\varepsilon_n x+\varepsilon_ny_n)\to0$, as $n\to\infty$. Note that
  	\begin{equation}\label{54}
  	\begin{aligned}
  	\widetilde{u}_n\rightharpoonup\widetilde{u} &\hspace{2ex}\text{in} \hspace{1ex}H_\varepsilon;\\
  \widetilde{u}_n\rightharpoonup\widetilde{u}	&\hspace{2ex}\text{in}\hspace{1ex} L^p(\R^3),\hspace{1ex}2\le p\le 6;\\
  	\widetilde{u}_n\to\widetilde{u}&\hspace{2ex}\text{a.e.}\hspace{1ex}\text{on}\hspace{1ex}\R^3.
  	\end{aligned}
  	\end{equation}
  	We can obtain that $h(\varepsilon_n x+\varepsilon_ny_n)\widetilde{u}_n\to0$ a.e. on $\R^3$ and $h(\varepsilon_n x+\varepsilon_ny_n)\widetilde{u}_n$ is bounded in $L^2(\R^3)$. It follows that $h(\varepsilon_n x+\varepsilon_ny_n)\widetilde{u}_n\rightharpoonup0$ in $L^2(\R^3)$. Then, we have 
  	\begin{equation}\label{55}
  	\begin{aligned}
  	\int_{\R^3}h(\varepsilon_n x+\varepsilon_ny_n)\widetilde{u}_n\varphi dx\to0,
  	\end{aligned}
  	\end{equation}
as $n\to\infty$. Combining \eqref{53} and \eqref{55}, the desired conclusion  is obtained.
  \end{proof}
\begin{claim}
Suppose that $\varepsilon_n y_n\to x_0$, as $n\to\infty$, then
$$\begin{aligned}
&\int_{\R^3}\left[\frac{1}{|x|}*h(\varepsilon_n x+\varepsilon_ny_n)\widetilde{u}_n^2\right]h(\varepsilon_n x+\varepsilon_ny_n)\widetilde{u}_n\varphi dx\\
=&h(x_0)^2\int_{ \R^3}\left[\frac{1}{|x|}*\widetilde{u}^2\right]\widetilde{u}\varphi dx+o_n(1)\\
=&o_n(1),
\end{aligned}$$
 for any $ \varphi\in H^1(\R^3)$.
\end{claim}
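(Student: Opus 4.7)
The plan is to reduce the claim to Lemma \ref{L22}(iii) after absorbing the weights $h(\varepsilon_n x+\varepsilon_n y_n)$ into the argument. Write $h_n(x):=h(\varepsilon_n x+\varepsilon_n y_n)$ and $h_0:=h(x_0)$. By assumption $(H_1)$ and the continuity of $h$, the sequence $h_n$ is uniformly bounded by $\|h\|_\infty$ and, since $\varepsilon_n x+\varepsilon_n y_n\to x_0$ for each fixed $x$, one has $h_n(x)\to h_0$ pointwise, and in fact uniformly on every bounded subset of $\R^3$.

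First I would establish the convergence of the Riesz potential part: $\phi[h_n\widetilde{u}_n^2]\to h_0\phi_{\widetilde u^2}$ in $\mathcal{D}^{1,2}(\R^3)$. Following the argument of Lemma \ref{L22}(i), it suffices to estimate the associated linear functional
\begin{equation*}
\left|\int_{\R^3}\bigl(h_n\widetilde{u}_n^2-h_0\widetilde{u}^2\bigr)v\,dx\right|
\le \|h\|_\infty\int_{\R^3}|\widetilde{u}_n^2-\widetilde{u}^2||v|\,dx+\int_{\R^3}|h_n-h_0|\,\widetilde{u}^2|v|\,dx
\end{equation*}
for $v\in\mathcal{D}^{1,2}(\R^3)$. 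The first term vanishes as $n\to\infty$ exactly as in the proof of Lemma \ref{L22}(i) (boundedness of $\widetilde u_n$ in $H^1$, strong $L^{12/5}_{\mathrm{loc}}$ convergence, tail smallness in $L^6$). The second term vanishes by dominated convergence, since $|h_n-h_0|\le 2\|h\|_\infty$, $h_n\to h_0$ a.e., and $\widetilde u^2|v|\in L^1(\R^3)$ by H\"older with $\widetilde u\in L^{12/5}(\R^3)$, $v\in L^6(\R^3)$.

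Next I would show the analogue of Lemma \ref{L22}(iii) with weights, namely
\begin{equation*}
\int_{\R^3}\bigl[\phi[h_n\widetilde u_n^2]\bigr]\,h_n\widetilde u_n\varphi\,dx \;\longrightarrow\; h_0^2\int_{\R^3}\phi_{\widetilde u^2}\,\widetilde u\,\varphi\,dx.
\end{equation*}
Split the difference as
\begin{equation*}
\int\phi[h_n\widetilde u_n^2]\,h_0(\widetilde u_n-\widetilde u)\varphi\,dx
+\int\phi[h_n\widetilde u_n^2]\,(h_n-h_0)\widetilde u_n\varphi\,dx
+h_0\int\bigl(\phi[h_n\widetilde u_n^2]-h_0\phi_{\widetilde u^2}\bigr)\widetilde u\varphi\,dx.
\end{equation*}
The first piece tends to zero by the weak convergence $\widetilde u_n\rightharpoonup\widetilde u$ in $H^1$ (combined with $\phi[h_n\widetilde u_n^2]\varphi\in L^2$, coming from $\phi[h_n\widetilde u_n^2]\in L^6$ and $\varphi\in L^3$, which is the pattern used for \eqref{26}). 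The second piece tends to zero by H\"older: $\phi[h_n\widetilde u_n^2]\widetilde u_n\varphi$ is bounded in $L^1$-sense and $(h_n-h_0)\to 0$ boundedly and pointwise, so dominated convergence applies (using also the tail control $h(\infty)=0$ from $(H_1)$). The third piece is $o_n(1)$ by Step 1 and the H\"older estimate $|\cdot|\le |\phi[h_n\widetilde u_n^2]-h_0\phi_{\widetilde u^2}|_6\,|\widetilde u|_{12/5}|\varphi|_{12/5}$.

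Finally, the second equality $=o_n(1)$ follows at once: Lemma \ref{L42} tells us the limit $x_0$ lies in $\mathcal{G}$, and assumption $(H_1)$ then forces $h_0=h(x_0)=0$, killing the prefactor $h_0^2$. The main technical hurdle I anticipate is the uniform tail control in Step 1 for the Riesz potential with the extra weight $h_n$, but the vanishing of $h$ at infinity built into $(H_1)$ actually strengthens, rather than complicates, that estimate; everything else reduces to the weight-free arguments already carried out in Lemma \ref{L22}.
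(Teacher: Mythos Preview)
Your proposal is correct and runs parallel to the paper's proof, with two minor structural differences worth noting. First, you split the difference into three pieces (separating $h_0(\widetilde u_n-\widetilde u)$ from $(h_n-h_0)\widetilde u_n$), whereas the paper keeps these together and uses a two-term decomposition as in \eqref{510}. Second, in your Step~1 you aim for \emph{strong} $\mathcal{D}^{1,2}$ convergence of the weighted Riesz potential via the Lemma~\ref{L22}(i) pattern, while the paper is content with \emph{weak} $L^6$ convergence (obtained from $\widetilde h_n\widetilde u_n^2\rightharpoonup h(x_0)\widetilde u^2$ in $L^{12/5}$ and Lemma~\ref{L21}(iv)) and instead leans on the uniform $L^\infty$ bound \eqref{53} on the convolution established in Claim~1. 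The paper's route is slightly more economical here: once \eqref{53} is available, weak convergence suffices, and one avoids the delicate point that strong $L^{6/5}$ convergence of $\widetilde u_n^2-\widetilde u^2$ is not guaranteed by weak $H^1$ convergence alone. Your justification of the second piece via dominated convergence is a bit informal (the dominating function depends on $n$ through $\widetilde u_n$), but it becomes rigorous after the same $B_R/B_R^c$ split the paper uses, exploiting $h_n\to h(x_0)$ uniformly on compacts and the $L^2$ tail of the fixed $\varphi$. The final step, invoking Lemma~\ref{L42} and $(H_1)$ to get $h(x_0)=0$, is identical in both arguments.
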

\begin{proof}
For simplicity, we denote $h(\varepsilon_n x+\varepsilon_ny_n)$ by $\widetilde{h}_n(x)$. Then, from assumption $(H_1)$, we have
\begin{equation}\label{56}
\widetilde{h}_n(x)\to h(x_0),
\end{equation}
as $n\to\infty$ uniformly on bounded sets of $\R^3$. Combining \eqref{54} and \eqref{56}, we have
$\widetilde{h}_n(x)\widetilde{u}_n\to h(x_0)\widetilde{u}$ a.e. on $\R^3$ and $\widetilde{h}_n(x)\widetilde{u}_n$ is bounded in $L^2(\R^3)$. It follows that $\widetilde{h}_n(x)\widetilde{u}_n\rightharpoonup h(x_0)\widetilde{u}$ in $L^2(\R^3)$. Then, we have 
\begin{equation}\label{57}
\left|\int_{\R^3}\left(\widetilde{h}_n(x)\widetilde{u}_n-h(x_0) \widetilde{u}\right)\varphi dx\right|\to0,
\end{equation}
 for any $ \varphi\in H^1(\R^3)$, as $n\to\infty.$ By \eqref{53} and \eqref{57}, we have 
\begin{equation}\label{58}
\left|\int_{ \R^3}\left[\frac{1}{|x|}*\widetilde{h}_n(x)\widetilde{u}_n^2\right]\left(\widetilde{h}_n(x)\widetilde{u}_n-h(x_0) \widetilde{u} \right)\varphi dx\right|\to0,
\end{equation}
 for any $ \varphi\in H^1(\R^3)$, as $n\to\infty$. On the other hand, using \eqref{54} and \eqref{56}, we can deduce that
 $\widetilde{h}_n(x)\widetilde{u}_n^2\rightharpoonup h(x_0)\widetilde{u}^2$ in $L^{\frac{12}{5}}(\R^3)$. It follows from Lemma \ref{L21} (iv) that 
 \begin{equation*}
 \frac{1}{|x|}*\widetilde{h}_n(x)\widetilde{u}_n^2\rightharpoonup\frac{1}{|x|}*h(x_0)\widetilde{u}^2 \hspace{2ex}\text{in} \hspace{1ex}\mathcal{D}^{1,2}(\R^3)
 \end{equation*} 
and, then, in $L^6(\R^3)$. Then, we have 
\begin{equation}
\left|\int_{ \R^3}\left[\frac{1}{|x|}*\left(\widetilde{h}_n(x)\widetilde{u}_n^2-h(x_0)\widetilde{u}^2\right) \right]\widetilde{u}\varphi dx\right|\to0,
\end{equation}
 for any $ \varphi\in H^1(\R^3)$, as $n\to\infty.$
Note that 	
\begin{equation}\label{510}
\begin{aligned}
&\left|\int_{\R^3}\left[\frac{1}{|x|}*\widetilde{h}_n(x)\widetilde{u}_n^2\right]\widetilde{h}_n(x)\widetilde{u}_n\varphi dx-h(x_0)^2\int_{ \R^3}\left[\frac{1}{|x|}*\widetilde{u}^2\right]\widetilde{u}\varphi dx\right|\\
\le&\left|\int_{ \R^3}\left[\frac{1}{|x|}*\widetilde{h}_n(x)\widetilde{u}_n^2\right]\left(\widetilde{h}_n(x)\widetilde{u}_n-h(x_0) \widetilde{u} \right)\varphi dx\right|\\
&+\left|h(x_0)\int_{\R^3}\left[\frac{1}{|x|}*\left(\widetilde{h}_n(x)\widetilde{u}_n^2-h(x_0)\widetilde{u}^2\right) \right]\widetilde{u}\varphi dx\right|.
\end{aligned}
\end{equation}
Thus, using \eqref{58}--\eqref{510} and $(H_1)$, we have
	$$\begin{aligned}
&\int_{\R^3}\left[\frac{1}{|x|}*h(\varepsilon_n x+\varepsilon_ny_n)\widetilde{u}_n^2\right]h(\varepsilon_n x+\varepsilon_ny_n)\widetilde{u}_n\varphi dx\\
=&h(x_0)^2\int_{\R^3}\left[\frac{1}{|x|}*\widetilde{u}^2\right]\widetilde{u}\varphi dx+o_n(1)\\
=&o_n(1),
\end{aligned}$$
as desired. The rest proof is closely similar to that of Theorem \ref{the1}.
\end{proof}
 \vskip2mm
{\section{Nonexistence of ground state solution}\label{sec6}}
\setcounter{equation}{0}
\vskip2mm 
 In this section, we will study the nonexistence of ground state solutions  for the problem \eqref{1}. The proof of the case $h(x)\equiv 1$ is similar to that of $h(x)$ satisfying condition $(H_1)$, so we just give the proof for the latter case.
 \vskip2mm 
\begin{lemma}\label{L61}
	Assume that $(f_1)$--$(f_5)$ hold. Then, $c_\varepsilon=c_\infty$, for any $\varepsilon>0$.
\end{lemma}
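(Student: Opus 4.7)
The plan is to establish $c_\varepsilon = c_\infty$ via two opposite inequalities. For the easier direction, $c_\varepsilon \geq c_\infty$, I would pick any $u \in \mathcal{N}_\varepsilon$ and apply Lemma \ref{L23}(i) to the limiting functional $I_\infty$ to obtain a unique $t = t(u) > 0$ with $tu \in \mathcal{N}_\infty$. Hypothesis $(f_5)$ gives $V(\varepsilon x) \geq V_\infty$ and $Q_i(\varepsilon x) \leq Q_i^\infty$, while $(f_4)$ gives $K(\varepsilon x) \leq K^\infty$ pointwise; under $(H_1)$ the weighted nonlocal term in $\mathcal{I}_\varepsilon$ is nonnegative while the corresponding term in $I_\infty$ vanishes (since $h_\infty = 0$). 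A term-by-term comparison yields $\mathcal{I}_\varepsilon(su) \geq I_\infty(su)$ for every $s \geq 0$, so
\[
\mathcal{I}_\varepsilon(u) = \max_{s \geq 0}\mathcal{I}_\varepsilon(su) \geq I_\infty(tu) \geq c_\infty,
\]
and taking the infimum over $\mathcal{N}_\varepsilon$ finishes this direction.

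For the harder direction $c_\varepsilon \leq c_\infty$, I would employ a translation argument, pushing a ground state of the limiting problem off to infinity. Let $u_\infty$ be a positive ground state of the limiting problem; its existence follows from a variant of Lemma \ref{L27}, and it decays exponentially by standard elliptic regularity (as in Lemma \ref{L44}). Pick $\{y_n\} \subset \R^3$ with $|y_n| \to \infty$ along which $V(\varepsilon y_n) \to V_\infty$, $Q_i(\varepsilon y_n) \to Q_i^\infty$, $K(\varepsilon y_n) \to K^\infty$ --- possible by the definitions of these quantities at infinity together with $(f_4)$--$(f_5)$ --- and set $w_n(x) = u_\infty(x - y_n)$. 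Exponential decay of $u_\infty$, continuity of the potentials on bounded sets, and dominated convergence should then give
\[
\int V(\varepsilon x) w_n^2 \, dx \to V_\infty |u_\infty|_2^2, \quad \int Q_i(\varepsilon x) |w_n|^{q_i} dx \to Q_i^\infty |u_\infty|_{q_i}^{q_i}, \quad \int K(\varepsilon x) w_n^6 \, dx \to K^\infty |u_\infty|_6^6,
\]
while the weighted nonlocal term $\int \left[\frac{1}{|x|} * h(\varepsilon x) w_n^2\right] h(\varepsilon x) w_n^2 \, dx \to 0$ by the same reasoning as the Claims in Subsection \ref{subsec51}, since $h(\varepsilon(x + y_n)) \to 0$ by $(H_1)$. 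Lemma \ref{L23}(i) then produces $t_n > 0$ with $t_n w_n \in \mathcal{N}_\varepsilon$; the above convergences force $t_n \to 1$ and $\mathcal{I}_\varepsilon(t_n w_n) \to I_\infty(u_\infty) = c_\infty$, so $c_\varepsilon \leq \mathcal{I}_\varepsilon(t_n w_n) \to c_\infty$.

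The main obstacle will be ensuring the three potential integrals genuinely converge to the correct limits, given that $V_\infty$, $Q_i^\infty$, and $K^\infty$ are defined via $\liminf$/$\limsup$ rather than full limits at infinity. I would handle this by first selecting $y_n$ along a subsequence realizing these extremes, and then transferring the one-point convergence $V(\varepsilon y_n) \to V_\infty$ (and its analogues) to the effective support of $u_\infty$ using uniform continuity of the potentials on bounded sets (from $(f_1)$ and $(f_3)$) combined with the exponential decay of $u_\infty$, in the same spirit as the potential-perturbation estimates already carried out in the proof of Lemma \ref{cric1}.
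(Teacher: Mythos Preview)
Your proposal is correct and follows essentially the same two-inequality strategy as the paper: the pointwise comparison $\mathcal{I}_\varepsilon \geq I_\infty$ plus Nehari projection for $c_\varepsilon \geq c_\infty$, and translation of the limiting ground state to infinity for $c_\varepsilon \leq c_\infty$. Your extra care in selecting $y_n$ along a subsequence realizing the $\liminf/\limsup$ is warranted---the paper takes arbitrary $\zeta_n\to\infty$ and tacitly assumes the limits of the potentials exist---and your use of exponential decay of $u_\infty$ is harmless overkill where the paper just uses $L^p$-integrability to control the tail.
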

\begin{proof}
 By Lemma \ref{L27}, we see that there exists $u_\infty\in \mathcal{N}_{\infty}$ such that $\mathcal{I}_{\infty}(u_\infty)=c_\infty$.  Let us define $u_\infty^{\zeta _n}=u_\infty(x-\zeta_n)$, where $\zeta_n\in\R^3$ and $|\zeta_n|\to\infty$ as $n\to\infty$. From Lemma \ref{L23} (i) and (ii), there exists a positive bounded sequence $\{t_n\}$  such that $\{t_nu_\infty^{\zeta _n}\}\subset\mathcal{N}_\varepsilon$. Then, we have 
	\begin{equation*}
	\begin{aligned}
		&\hspace{1ex}t_n^2\left(||u_\infty||_{\mathcal{D}^{1,2}}^2+\int_{ \R^3}V(\varepsilon(x+\zeta_n))|u_\infty|^2dx\right)\\
		&+t_n^4\int_{ \R^3}\left[\frac{1}{|x|}*h(\varepsilon(x+\zeta_n)|u_\infty|^2\right]h(\varepsilon(x+\zeta_n)|u_\infty|^2 dx\\
		 =& \sum\limits_{i=1}^{m} t_n^{q_i}\int_{\R^3}Q_i (\varepsilon(x+\zeta_n))|u_\infty|^{q_i}dx + t_n^6
	\int_{ \R^3} K(\varepsilon(x+\zeta_n))|u_\infty|^{6}dx.
	\end{aligned}
	\end{equation*}
Up to a subsequence, we may assume that $t_n\to {t}_0 >0$, as $n\to\infty$. Taking the limit as $n\to\infty$ in the above equality and using condition $(f_5)$, we get
\begin{equation}\label{61}
\begin{aligned}
	t_0^2\left(||u_\infty||_{\mathcal{D}^{1,2}}^2+\int_{ \R^3}V_\infty|u_\infty|^2dx\right) 
= \sum\limits_{i=1}^{m} t_0^{q_i}\int_{ \R^3}Q_i ^\infty|u_\infty|^{q_i}dx + t_0^6
\int_{ \R^3} K^\infty|u_\infty|^{6}dx.
\end{aligned}
\end{equation}	
Note that $u_\infty\in \mathcal{N}_{\infty}$, we have
\begin{equation}\label{62}
	\left(||u_\infty||_{\mathcal{D}^{1,2}}^2+\int_{ \R^3}V_\infty|u_\infty|^2dx\right) 
= \sum\limits_{i=1}^{m}\int_{\R^3}Q_i ^\infty|u_\infty|^{q_i}dx + 
\int_{ \R^3} K^\infty|u_\infty|^{6}dx.
\end{equation}
Combining \eqref{61} and \eqref{62}, we can obtain that $\lim\limits_{n\to\infty}t_n=t_0=1$.
	It follows from $V(x)$ is bounded that there exists a constant $M$ sucht that $|V(\varepsilon(x+\zeta_n))-V_\infty|\le M$. Since $u_\infty\in H_\varepsilon$, then for any $\sigma>0$, there exists $R>0$ such that $|u_\infty|_{2,B_R^C(0)}<\frac{\sigma}{2M}$. Thus, we have
	\begin{equation*}
	\int_{\R^3\backslash B_{R}(0)}|V(\varepsilon(x+\zeta_n))-V_\infty|u_{\infty}^2dx<\frac{\sigma}{2},
	\end{equation*}
	and using the condition $(f_5)$ and $|\zeta_n|\to\infty$, as $n\to\infty$, we have
	\begin{equation*}
	\int_{B_{R}(0)}|V(\varepsilon(x+\zeta_n))-V_\infty|u_{\infty}^2dx<\frac{\sigma}{2},
	\end{equation*}
	as $n\to\infty$.
	Which implies that
	\begin{equation}\label{63}
	\int_{\R^3}V(\varepsilon(x+\zeta_n))u_{\infty}^2dx\to\int_{ \R^3}V_\infty u_{\infty}^2dx,
	\end{equation}
	as $n\to\infty$. Similarly, we can obtain that
	\begin{equation}
		\int_{ \R^3}K(\varepsilon(x+\zeta_n))|u_{\infty}|^6dx\to\int_{ \R^3}K^\infty|u_{\infty}|^6dx,
	\end{equation}
	and
	\begin{equation}\label{65}
		\int_{ \R^3}Q_i(\varepsilon(x+\zeta_n))|u_{\infty}|^{q_i}dx\to\int_{\R^3}Q_i^\infty|u_{\infty}|^{q_i}dx,
	\end{equation}
	as $n\to\infty$. From \eqref{63}--\eqref{65} and $\lim\limits_{n\to\infty}t_n=1$, we can deduce that
	\begin{equation}\label{66}
	\begin{aligned}
	c_\varepsilon=&\inf\limits_{u\in \mathcal{N}_\varepsilon} \mathcal{I}_\varepsilon(u)\\
	\le& \mathcal{I}_\varepsilon(t_nu_\infty^{\zeta _n}) \\
	=&\mathcal{I}_\infty(t_nu_\infty)+\frac{t_n^2}{2}	
	\int_{ \R^3}\left(V(\varepsilon(x+\zeta_n))-V_\infty\right)u_{\infty}^2dx\\
	&+\frac{t_n^4}{4}\int_{ \R^3}\left[\frac{1}{|x|}*h(\varepsilon(x+\zeta_n)u_\infty^2\right]h(\varepsilon(x+\zeta_n)u_\infty^2 dx\\
	& - \sum\limits_{i=1}^{m}\frac{t_n^{q_i}}{q_i} \int_{\R^3}(Q_i (\varepsilon(x+\zeta_n))-Q_i^\infty)|u_\infty|^{q_i}dx - \frac{t_n^6}{6}
	\int_{ \R^3} (K(\varepsilon(x+\zeta_n))-K^\infty)|u_\infty|^{6}dx\\
	\to&\mathcal{I}_{\infty}(u_\infty)=c_\infty,
	\end{aligned}
	\end{equation}
	as $n\to\infty$. Thus, we get $c_\varepsilon\le c_\infty$. On the other hand, it follows from condition $(f_5)$ that $\mathcal{I}_{\infty}(u)\le \mathcal{I}_{\varepsilon}(u)$, for any $u\in H_\varepsilon$. Then, for any $u\in\mathcal{N}_\infty$, by the similar arguments to Lemma \ref{L23} (i) and (ii), we can obtain that there exists a unique bounded $t_\varepsilon>0$ such that $t_\varepsilon u\in\mathcal{N}_\varepsilon$ and $\mathcal{I}_{\varepsilon}(t_{\varepsilon}u):=\max\limits_{t\geq0} \mathcal{I}_{\varepsilon}(tu)$. So it is easy to see that
	\begin{equation}\label{67}
	c_\infty
	=\inf\limits_{u\in \mathcal{N}_\infty} \mathcal{I}_\infty(u)
	\le\inf\limits_{u\in \mathcal{N}_\infty} \mathcal{I}_\varepsilon(u)
	\le\inf\limits_{u\in \mathcal{N}_\infty} \mathcal{I}_\varepsilon(t_\varepsilon u)
	=\inf\limits_{u\in \mathcal{N}_\varepsilon} \mathcal{I}_\varepsilon(u)=c_\varepsilon.
	\end{equation}
	Combining \eqref{66} and \eqref{67}, the proof is complete.
\end{proof}
\begin{proof}[\rm{}\textbf{Proof of Theorem \ref{the3}}] Arguing by contradiction, suppose that problem \eqref{1} has a positive ground state solutions, i.e., there exist $\varepsilon_0>0$ and $u_0\in\mathcal{N}_{\varepsilon_0}$ such that $\mathcal{I}_{\varepsilon_0}(u_0)=c_{\varepsilon_0}$. By the similar arguments to Lemma \ref{L23} (i), there exists  $t_{\varepsilon_0}>0$ such that $t_{\varepsilon_0} u_0\in\mathcal{N}_\infty$. From condition $(f_5)$, we get that  $\mathcal{I}_{\infty}(u)\le \mathcal{I}_{\varepsilon_0}(u)$, for any $u\in H_{\varepsilon_0}$. Then, we have
\begin{equation}
c_\infty
=\inf\limits_{u\in \mathcal{N}_\infty} \mathcal{I}_\infty(u)
\le\mathcal{I}_\infty(t_{\varepsilon_0}u_0)
\le \mathcal{I}_{\varepsilon_0}(t_{\varepsilon_0}u_0)
\le\mathcal{I}_{\varepsilon_0}(u_0) =c_{\varepsilon_0},
\end{equation}
it follows from Lemma \ref{L61} that 
$\mathcal{I}_{\varepsilon_0}(t_{\varepsilon_0}u_0)
=\mathcal{I}_{\infty}(t_{\varepsilon_0}u_0)$. On the other hand, we have
\begin{equation}
\begin{aligned}
\mathcal{I}_{\varepsilon_0}(t_{\varepsilon_0}u_0)
=&\mathcal{I}_{\infty}(t_{\varepsilon_0}u_0)+\frac{t_{\varepsilon_0}^2}{2}	
\int_{ \R^3}\left(V(\varepsilon_0x)-V_\infty\right)u_{\infty}^2dx\\
&+\frac{t_{\varepsilon_0}^4}{4}\int_{ \R^3}\left[\frac{1}{|x|}*h(\varepsilon_0x)u_\infty^2\right]h(\varepsilon_0x)u_\infty^2 dx + \sum\limits_{i=1}^{m}\frac{t_{\varepsilon_0}^{q_i}}{q_i} \int_{ \R^3}(Q_i^\infty-Q_i (\varepsilon_0x))|u_\infty|^{q_i}dx\\
& + \frac{t_{\varepsilon_0}^6}{6}
\int_{ \R^3} (K^\infty - K(\varepsilon_0x))|u_\infty|^{6}dx,
\end{aligned}
\end{equation}
which implies that $\mathcal{I}_{\varepsilon_0}(t_{\varepsilon_0}u_0)
>\mathcal{I}_{\infty}(t_{\varepsilon_0}u_0)$. This is a contradiction and ends the proof.
\end{proof}
\par\noindent
\textbf{Acknowledgments}
\vskip2mm
This work is  supported by National Natural Science Foundation of China (No. 11671403).

\vskip6mm
 {}
\end{document}